\documentclass[a4paper,12pt]{amsart}
\usepackage{amsmath,amsfonts,amssymb,amsthm}
\usepackage{latexsym,graphicx}
\usepackage{xypic}
\usepackage[matrix,arrow,ps,color,line,curve,frame]{xy}
\usepackage{tikz}
\usepackage{cancel}
\usepackage{comment}
\usepackage{hyperref}
\usepackage{color, colortbl}
	
\definecolor{Gray}{gray}{0.8}
\definecolor{LGray}{gray}{0.91}

\newtheorem{proposition}{\sc Proposition}[section]
\newtheorem{lemma}[proposition]{\sc Lemma}
\newtheorem{corollary}[proposition]{\sc Corollary}
\newtheorem{theorem}[proposition]{\sc Theorem}
\newtheorem{definition}[proposition]{\sc Definition}
\theoremstyle{definition}

\theoremstyle{remark}
\newtheorem{remark}[proposition]{\sc Remark}

\renewcommand{\phi}{\varphi}
\renewcommand{\epsilon}{\varepsilon}
\renewcommand{\subset}{\subseteq}

\renewcommand{\[}{\begin{equation}}
\renewcommand{\]}{\end{equation}}

\newlength{\bibitemsep}
\newlength{\bibparskip}
\let\oldthebibliography\thebibliography
\renewcommand\thebibliography[1]{%
  \oldthebibliography{#1}%
  \setlength{\parskip}{\bibitemsep}%
  \setlength{\itemsep}{\bibparskip}%
}

\begin{document}
\numberwithin{proposition}{section}
\numberwithin{equation}{section}
\baselineskip=13pt
\author{Piotr~M.~Hajac}
\address{Instytut Matematyczny, Polska Akademia Nauk, ul.~\'Sniadeckich 8, Warszawa, 00--656 Poland} 
\email{pmh@impan.pl}
\author{Elizabeth ~A. ~Pacheco}
\address{Centre for Research in Mathematics and Data Science, Western Sydney University, Australia}
\email{e.pacheco@westernsydney.edu.au}
\author{\L ukasz~Kaczmarczyk}
\address{Wydział Matematyki, Informatyki i Mechaniki, Uniwersytet Warszawski, ul. Banacha 2,Warszawa 02--097 Poland}
\email{lw.kaczmarczyk@uw.edu.pl}
\author{Mateusz~Lowiel}
\address{Wydział Matematyki, Informatyki i Mechaniki, Uniwersytet Warszawski, ul. Banacha 2,Warszawa 02--097 Poland}
\email{m.lowiel@uw.edu.pl}
\title[The maximal dimensions of path and graph algebras]{{\large The maximal dimensions of\\ path and graph algebras}}
\maketitle
\vspace*{-10mm}\begin{abstract}
We consider the class of acyclic connected directed graphs with $N\geq 1$. In this paper we find the optimal upper bound for the number of paths amongst acyclic, connected graphs with $N$ edges. We prove that it is in fact optimal by finding an acyclic, connected graph with $N$ edges that realizes this bound. We then adapt these methods to find an optimal bound for Leavitt path algebras over a finite, acyclic, connected graph with $N$ edges.
\end{abstract}

\tableofcontents

\section{Introduction}
\noindent 
Graph theory is considered one of the oldest and most accessible branches of combinatorics and has numerous 
natural connections to other areas of mathematics. 
In particular, directed graphs, or quivers, are fundamental tools in representation 
theory \cite{ass06} as well as in noncommutative geometry \cite{pr06}
and topology~\cite{hrt20,cht21}. 
In this paper, we focus entirely on the combinatorics of directed graphs: optimization and counting problems are common throughout combinatorics, 
and our paper solves one of them
(cf.~\cite{ssz23}).

Inspired by Theorem~\ref{oldthm}, proven in \cite{ht19}, in this paper we investigate the maximal dimension of a finite-dimensional path algebra over a connected quiver with a given number of edges. This is a natural follow-up to the problem solved in \cite{ht19}. We find the optimal upper bound for the number of paths in an acyclic, connected graph and prove that it is in fact optimal by finding a graph with the largest number of paths among acyclic, connected graphs. To achieve this, we employ multiple moves that increase the number of paths, which allow us to consider more and more specialized graphs. Asymptotically we end up with exactly one candidate, up to a mirror image symmetry. The formulas for the optimal upper bound given a number of edges is proven in Theorem~\ref{3sis}. The problem of optimal path algebras is proven in Section 3.

As it turns out, by employing some adaptations one can find the optimal upper bound for the maximal dimension of a Leavitt path algebra over a finite, acyclic, connected graph with a fixed number of edges. A representative for a graph whose Leavitt path algebra is of the maximal possible dimension is given in Theorem~\ref{th4.0}. The problem of optimal Leavitt path algebras is proven in Section 4.

We end the paper off with an interesting follow-up opposite problem of realizing a matrix algebra by a minimal amount of edges, see Corollary~\ref{necessary} and Remark~\ref{Remark}.
\section{Preliminaries}
\begin{definition}
A directed graph is a quadruple $E:=(E^0,E^1,s,t)$ consisting of
\begin{enumerate}
\item
 the set of vertices $E^0$ and the set of edges $E^1$,
\item 
the source and target maps $s_E,t_E\colon E^1\rightarrow E^0$
assigning to each edge its source and target vertex, respectively,
\end{enumerate}
\end{definition}

A finite path in a directed graph $E$ is a finite tuple $p_n:=(e_1,\ldots,e_n)$ of edges
satisfying 
\[
t_E(e_1)=s_E(e_2),\quad t_E(e_2)=s_E(e_3),\quad \ldots,\quad t_E(e_{n-1})=s_E(e_n).
\]
The beginning $s(p_n)$ of $p_n$
is $s(e_1)$ and the end $t(p_n)$ of $p_n$ is $t(e_n)$. If $s(p_n)=t(p_n)$, we call $p_n$ 
a loop, or a cycle. A directed graph without loops is called acyclic.
The length of a path is the size of the tuple. Every edge is a path of length~$1$, and
vertices are considered as finite paths of length~$0$. The set of all paths in $E$ of length $k$ is denoted by~$FP_{k}(E)$, and the set of all finite 
paths in $E$ by~ $FP(E)$. Undirected paths are paths in which the condition 
 $t_E(e)=s_E(f)$ whenever the edge $f$ follows the edge $e$ is replaced by the requirement that 
$\{s_E(e),t_E(e)\}\cap\{s_E(f),t_E(f)\}\neq\emptyset$ whenever the edge $f$ follows the edge~$e$. We say that a directed
 graph is connected iff
any two vertices are connected by a finite undirected path.

This paper is inspired by the results of \cite{ht19}, where an optimal upper bound for the number of paths of fixed length was found. We give the precise formulation below.
\begin{theorem}[\cite{ht19,c-a20}]\label{oldthm}
Let $E$ be an acyclic directed graph with $N\geq 1$ edges, and let 
\[\nonumber
1\leq k\leq N=:nk+r,\quad
0\leq r\leq k-1.
\]
Then there are at most
\[\nonumber
\boxed{
P^N_k:=(n+1)^rn^{k-r}
}
\]
different paths of length $k$, and the bound is optimal.
\end{theorem}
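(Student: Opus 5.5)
The plan is to prove the upper bound by a layering argument and then exhibit a graph that attains it. Let $E$ be acyclic with $N$ edges. Each path of length $k$ is a tuple $(e_1,\ldots,e_k)$ of edges. Acyclicity means no edge can occur twice in a path, since a repeated edge would yield a cycle. The aim is to partition $E^1$ into $k$ classes $C_1,\ldots,C_k$ so that every path of length $k$ picks exactly one edge from each class, in order.

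\textbf{Constructing the partition.} For an edge $e$, let $\ell(e)$ be the length of the longest path ending with $e$. Acyclicity together with finiteness makes this finite. Along any path, $\ell$ strictly increases: $\ell(e_{i+1})\geq \ell(e_i)+1$. Thus a path $(e_1,\dots,e_k)$ need not have $\ell(e_i)=i$, and the naive classes by $\ell$ do not directly work. I would instead reduce modulo $k$ and set $C_j:=\{e : \ell(e)\equiv j \bmod k\}$. Then for a path of length $k$ the values $\ell(e_1)<\dots<\ell(e_k)$ are strictly increasing, but they need not be distinct mod $k$. This is the obstacle. My fix is to use a different injective map:
\[
\nonumber
FP_k(E)\ni p\longmapsto (e_1,\ldots,e_k)\in C_{\sigma(1)}\times\cdots
\]
That becomes messy, so I would rather argue as follows. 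A path of length $k$ is determined by its $k$ edges as a set, since the order is forced by acyclicity, the edges being a chain in the reachability order. Moreover, for each residue $j$, the edges $e_i$ with $\ell(e_i)\equiv j$ can be tracked. The cleaner route is to fix the residue pattern. I now think the intended proof (cf.\ \cite{ht19}) goes by induction on $N$ via a removal move, and I would follow that instead.

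\textbf{Induction via maximal vertices.} Let $m(v)$ be the number of paths of length $k$ through a vertex or edge. Choose a source vertex $v$ with out-edges $f_1,\dots,f_d$. Every length-$k$ path either avoids $v$, or begins at $v$ through some $f_i$. The plan is a weighted induction: define $a_j(v)$ as the number of paths of length $j$ starting at $v$, and prove the stronger claim that $\sum$ over paths is bounded by a product $\prod_{i=1}^k |C_i|$ for some partition with $\sum|C_i|=N$. This product is at most the balanced product $(n+1)^r n^{k-r}$ by AM--GM for integers: moving one unit from a larger factor to a smaller one increases the product while sizes differ by $\geq 2$. The partition would be built by greedy layering: $C_i$ collects the edges $e$ such that the longest path ending with $e$, truncated at $k$, has length $i$, with edges of $\ell>k$ assigned to class $k$. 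Then a length-$k$ path $(e_1,\dots,e_k)$ has $\ell(e_i)\geq i$. Edges with $\ell(e_i)>i$ break injectivity, and handling them is the main obstacle. I would try to handle them by a shifting move: redirect the graph so that all $\ell$ values are capped, showing the count of length-$k$ paths does not decrease. Concretely, I would contract to the graph whose vertices are the $\ell$-levels, where the count becomes exactly $\prod|C_i|$ under an injection $p\mapsto(e_1,\dots,e_k)$, once $\ell(e_i)=i$ is forced for paths lying in the top $k$ levels.

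\textbf{Optimality.} Take $k+1$ vertices $v_0,\dots,v_k$. Put $n+1$ parallel edges $v_{i-1}\to v_i$ for $i\leq r$, and $n$ edges for $i>r$. This graph has $N$ edges, is acyclic, and has exactly $(n+1)^r n^{k-r}$ paths of length $k$.
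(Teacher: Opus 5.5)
The paper does not prove this theorem; it quotes it from the cited works, so your proposal has to stand on its own. Your optimality example is correct: it has $r(n+1)+(k-r)n=N$ edges, every path of length $k$ runs from $v_0$ to $v_k$, and there are exactly $(n+1)^r n^{k-r}$ of them. The integer balancing step is also fine.

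The upper bound, however, is never proved, and this is a genuine gap. You drop the residue idea. You announce an induction on sources, but $m(v)$ and $a_j(v)$ are never used. In the final layering you name the obstacle (edges with $\ell(e_i)>i$) without resolving it. The truncated layering as stated is in fact false. For $k=2$ and the trunk $(1,2,2)$ it gives $|C_1|=1$ and $|C_2|=4$, so $\prod|C_i|=4$, while there are $1\cdot 2+2\cdot 2=6$ paths of length $2$; the map $p\mapsto(e_1,e_2)$ does not even land in $C_1\times C_2$. Contracting vertices with equal longest-incoming-path length is harmless, since it keeps acyclicity and maps paths injectively to paths. But it keeps the edges that skip levels, which are precisely the source of the collisions, so nothing forces $\ell(e_i)=i$.

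More fundamentally, no partition of the \emph{original} edge set can in general make every length-$k$ path a transversal. Take $k=2$ and edges $z\to a$, $a\to b$, $b\to c$, $a\to c$, $c\to d$. The five paths of length $2$ link the edges $za,ab,bc,cd,ac$ into an odd cycle, which has no proper $2$-colouring. So the missing step is a deformation of $E$ that does not decrease $|FP_k(E)|$.

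One that works is the following. Fix a topological order $w_1<\dots<w_V$ and let $x_{ab}$ be the number of edges $w_a\to w_b$. Then $|FP_k(E)|=\sum_{a_0<\dots<a_k}\prod_{i=1}^k x_{a_{i-1}a_i}$ has degree at most one in each variable. Suppose two support pairs $(a,b)$ and $(c,d)$ never occur in a common monomial. Then the count is affine along $x_{ab}+x_{cd}=\mathrm{const}$. Moving all that multiplicity onto the better of the two pairs keeps $N$, integrality and acyclicity, does not decrease the count, and shrinks the support.

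When no such move is possible, any two support pairs $(a,b),(c,d)$ satisfy $b\le c$ or $d\le a$. They are therefore non-overlapping intervals listed left to right, and every length-$k$ path uses $k$ consecutive members of this list. Grouping list positions modulo $k$ (your original residue idea) now makes every path a transversal. Hence $|FP_k(E)|\le\prod_{c=0}^{k-1}X_c$, where $X_c$ is the total multiplicity at positions $\equiv c \bmod k$ and $\sum_c X_c=N$. Your balancing argument then finishes the proof.
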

\section{Path algebras}
\noindent
\subsection{From forests to trunks}
Let $E$ be a connected acyclic graphs with $N>0$ edges. We denote the set of all connected acyclic graph with $N>0$ edges by~$\mathcal{E}^N$, and we write 
\begin{equation}
P^N\!(E):=|\mathrm{FP}(E)|
\end{equation}
for the number of all paths in~$E$. 
\begin{definition}
Let $(E^0,\,E^1,\,s_E,\,t_E)$ be an acyclic graph with $N>0$ edges satisfying $E^0=s_E(E^1)\cup t_E(E^1)$. 
We call such graphs \emph{$N$-forest graphs}. 
A connected $N$-forest graph is called an \emph{$N$-tree graph}, and an $N$-tree graph
is referred to as  a \emph{trunk graph} when its vertices can be ordered $v_1,\ldots, v_n$ in such a way that
any edge $e\in E^1$ enjoys the property $s_E(e)=v_i$ and $t_E(e)=v_{i+1}$ for some~$1\leq i\leq n-1$:
\begin{center}
\begin{tikzpicture}[scale=4]

\fill (0,0)  circle[radius=.4pt];
\draw (0,-.1) node{ {\small $v_1$ }};
\draw[->, shorten >=5pt, shorten <=5pt,out=60, in=120] (0,0) to (.5,0);
\draw[->, shorten >=5pt,shorten <=5pt,out=20, in=160] (0,0) to (.5,0);
\draw[->, shorten >=5pt, shorten <=5pt,out=-50, in=-130] (0,0) to (.5,0);
\draw (.25,0) node{$\vdots$};
\draw (.25,.2) node{{\small $m_1$ }};
\fill (.5,0)  circle[radius=.4pt];
\draw (.5,-.1) node{ {\small $v_2$ }};
\draw[->, shorten >=5pt, shorten <=5pt,out=60, in=120] (.5, 0) to (1,0);
\draw[->, shorten >=5pt,shorten <=5pt,out=20, in=160] (.5, 0) to (1,0);
\draw[->, shorten >=5pt, shorten <=5pt,out=-50, in=-130] (.5, 0) to (1,0);
\draw (.75,0) node{$\vdots$};
\draw (.75,.2) node{{\small $m_2$ }};
\fill (1,0)  circle[radius=.4pt];
\draw (1,-.1) node{ {\small $v_3$ }};
\draw (1.25,0) node{$\cdots$};
\fill (1.5,0)  circle[radius=.4pt];
\draw (1.5,-.1) node{ {\small $v_{n-1}$ } };
\draw[->, shorten >=5pt,shorten <=5pt, out=60, in=120] (1.5, 0) to (2,0);
\draw[->, shorten >=5pt,shorten <=5pt,out=20, in=160] (1.5, 0) to (2,0);
\draw[->, shorten >=5pt, shorten <=5pt, out=-50, in=-130] (1.5,0) to (2,0);
\draw (1.75,0) node{$\vdots$};
\draw (1.75,.2) node{ {\small$m_{n-1}$ }};
\fill (2,0)  circle[radius=.4pt];
\draw (2,-.1) node{ {\small $v_n$ }};
\end{tikzpicture}.
\end{center}
We denote the set of all forest, tree and trunk graphs with  $N>0$ edges by $\mathcal{E}^N$, $\mathcal{F}^N$, and $\mathcal{T}^N$, respectively.
The trunk graph as in the above picture will be written as $(m_1,\ldots,m_{n-1})$.
\end{definition}

\begin{definition}
Let $E$ be a trunk graph with $N>0$ edges and $n$ vertices. The \emph{width} of $E$ is
$$
w(E):=\max_{v\in E^0}|s_E^{-1}(v)|,
$$ 
 the \emph{weight} of $E$ is~$N$, and the \emph{length} of $E$ is~$n-1$. 
 We denote the set of all trunk graphs with  $N>0$ edges and the width $w$ by~$\mathcal{T}^N_w$.
\end{definition}

\begin{lemma}\label{lem0}
The following equality holds:
$$
\sup_{E\in\mathcal{E}^N}P^N\!(E)=\sup_{E\in\mathcal{T}^N}P^N\!(E).
$$
\end{lemma}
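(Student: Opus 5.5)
The inequality ``$\geq$'' is immediate, because every trunk graph with $N$ edges is connected and acyclic, so $\mathcal{T}^N\subseteq\mathcal{E}^N$. For ``$\leq$'' I plan to show that every $E\in\mathcal{E}^N$ can be turned into a trunk graph with $N$ edges by a finite sequence of moves. Each move keeps the graph connected and acyclic, keeps the number of edges equal to $N$, and does not decrease $P^N$. I will use a single bookkeeping device throughout. For a vertex $v$, let $a(v)$ be the number of paths ending at $v$ and $b(v)$ the number of paths starting at $v$, both counting the trivial path. Then
$$P^N\!(E)=|E^0|+\sum_{e\in E^1}a(s_E(e))\,b(t_E(e)).$$

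\emph{Step 1 (merging incomparable vertices).} Order $E^0$ by reachability, $u\preceq w$ iff there is a path from $u$ to $w$. Suppose $u,w$ are incomparable, and identify them into a single vertex.
\begin{itemize}
\item The new graph is still acyclic, since a cycle would need a path from $u$ to $w$ or from $w$ to $u$.
\item It is still connected and still has the same $N$ edges.
\item Every edge tuple that was a path remains a path, and distinct tuples stay distinct, so no nonempty path is lost.
\item Exactly one trivial path (vertex) is lost. However, if $u$ has an incoming edge $f$ and $w$ has an outgoing edge $g$ (or the roles are swapped), then $(f,g)$ is a new path. So in that case $P^N$ does not decrease.
\end{itemize}
Choosing a good pair is where I expect the main obstacle. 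If the only incomparable pairs are pairs of sources or pairs of sinks, I take incomparable sources $u,w$ and an edge $w\to x$, and try to merge $u$ with $x$ instead. This works whenever $u$ and $x$ are incomparable, because $x$ has an incoming edge and $u$ has an outgoing edge. The case $u\prec x$ for every such $x$ requires a separate argument. There I would try to show that relocating the edges out of $w$ so that they leave $u$ (then $w$ disappears and the edge count is unchanged) does not decrease $\sum a\,b$. The reason is that in that configuration $a$ gains at the targets, while $w$, being a source, only contributed $a(w)=1$. The sink case is the mirror image under reversing all edges. Iterating reduces $|E^0|$ each time, so it terminates in a graph whose vertices are totally ordered by $\preceq$.

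\emph{Step 2 (from a chain to a trunk).} If $\preceq$ is a total order, list the vertices as $v_1\prec\cdots\prec v_n$. Since $v_i$ reaches $v_{i+1}$ and no vertex lies strictly between them, there is an edge $v_i\to v_{i+1}$. Any remaining edge is a bypass edge $e\colon v_i\to v_j$ with $j>i+1$. Replace $e$ by a new edge $e'\colon v_{j-1}\to v_j$, and compute $a',b'$ in $E\setminus\{e\}$. The contribution of $e$ was $a'(v_i)b'(v_j)$, and that of $e'$ is $a'(v_{j-1})b'(v_j)$. We have $a'(v_{j-1})\geq a'(v_i)$, because every path into $v_i$ extends along the consecutive edges to $v_{j-1}$. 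So $P^N$ does not decrease. Connectivity, acyclicity and the edge count are preserved, and the number of bypass edges drops. After finitely many steps we reach a trunk graph $(m_1,\ldots,m_{n-1})$ with $N$ edges and at least as many paths as the original $E$, which proves the lemma.
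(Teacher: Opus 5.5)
You have a genuine gap in Step~1, exactly in the case you flagged as the main obstacle. Relocating all edges out of the source $w$ so that they leave the source $u$ is just the identification of $u$ with $w$. Since no edge enters either vertex, no edge enters the merged vertex. Hence the nonempty paths are exactly the same edge tuples as before, and $a(x)$ is unchanged at every target $x$, so the claim that $a$ ``gains at the targets'' is false. $P^N$ therefore drops by exactly $1$, the lost trivial path.

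This case really occurs. For $u\to c\leftarrow w$, $c\to d$, the only incomparable pair is $\{u,w\}$, and both are sources. The unique out-neighbour $c$ of $w$ satisfies $u\prec c$, and symmetrically $w\prec c$. Your move turns this graph with $9$ paths into the trunk $(2,1)$ with $8$ paths. So the chain of non-decreasing moves breaks, and the lemma is not proved.

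What is missing is the opposite instinct, which is the paper's: sources are split and stray paths are \emph{combed}, not merged. In the bad case, retarget an edge $g\colon w\to x$ to $g'\colon w\to u$. Because $w$ is a source, the paths through $g$ number $b(x)$ and those through $g'$ number $b(u)$. Moreover $b(u)>b(x)$ since $u\prec x$: prefix a fixed path from $u$ to $x$, and the trivial path at $u$ is extra. Acyclicity is kept because nothing reaches $w$, and connectivity because $x$ is still reachable from $u$. This move does not reduce $|E^0|$, so you also need a different termination argument. For example, the pair $(P^N,-|E^0|)$ strictly increases lexicographically under both moves, and only finitely many graphs have $N$ edges.

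Separately, your bookkeeping identity is wrong. The sum $\sum_e a(s_E(e))\,b(t_E(e))$ counts every path as many times as its length; for $v_1\to v_2\to v_3$ your formula gives $7$ instead of $6$. The correct global identity is $P^N(E)=|E^0|+\sum_e b(t_E(e))$. Step~2 survives only because there you actually use the correct local count $P^N(E)=P^N(E\setminus e)+a'(s_E(e))\,b'(t_E(e))$, and that step is fine as written.
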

\begin{proof}
Let $E\in\mathcal{E}^N$. Our proof consists of the following three constructions, all of which can only enlarge the value $P^N(E)$:
\begin{enumerate}
	\item \emph{Splitting sinks and sources}. For any sink $v\in E^0$ such that $|t_E^{-1}(v)|\ge2$, consider the graph $F=(F^0,F^1,s_F,t_F)$, whose set of vertices is
	\begin{equation*}
		F^0 := \left(E^0\setminus\{v\}\right)\sqcup \{v_e:e\in t_E^{-1}(v)\}.
	\end{equation*}
	Both the set of edges $F^1:=E^1$ and the source map $s_F:=s_E$ we set to be the same. The target map $t_F$ is defined as follows
	\begin{equation*}
		t_F(e) = \begin{cases}
			v_e & \text{ if }e\in t_E^{-1}(v),\\
			t_E(e) & \text{ otherwise.}
		\end{cases}
	\end{equation*}
	Clearly $F\in\mathcal{E}^N$. Note that $P^N(F)> P^N(E)$ as $F$ contains more vertices than $E$ while preserving the amount of paths of positive length. We say that the graph $F$ came from the graph $E$ by splitting of the sink $v$. Dually, one can split a source and that construction also can only increase the total amount of paths.
	\item \emph{Combing stray paths}. Suppose that inside $E$ we have two paths $e=(e_1,\dots,e_n)$ and $v=(v_1,\dots,v_k)$ such that there exists $i\in\{1,\dots,n\}$ for which we have $t(v_k)=t(e_i)$ and $s_E(v_j)$ is never a vertex that $e$ runs through. We can define a new graph $G=(G^0,G^1,s_G,t_G)$, where the set of vertices, the set of edges and the source map stays the same $G^0:=E^0, G^1:=E^1, s_G:=s_E$. The target map $t_G$ differs from $t_E$ only on $v_k$: 
	\begin{equation*}
		t_G(e) = \begin{cases}
			s_E(e_1) & \text{ if }e=v_k,\\
			t_E(e) & \text{ otherwise}.
		\end{cases}
	\end{equation*} 
	Clearly $G$ is connected has the same amount of edges as $E$ and our assumption of all $s_E(v_j)$ never being a vertex that $e$ runs through ensures that $G$ remains acyclic. Thus $G\in\mathcal{E}^N$ and note that $P^N(G)> P^N(E)$ as we can define an injection $FP(E)\to FP(G)$ that sends any path $w$ that does not contain $v_k$ to itself -- note that our graphs are completely the same if we cut out this particular edge, so this is a well-defined mapping. If $w$ contains $v_k$, say $w=(w_1,\dots,w_m)$ with $v_k=w_j$ for some $j\in\{1,\dots,m\}$, then we shall send $w$ to the following path
	\begin{equation*}
		w'=(w_1,\dots,w_j,e_1,\dots,e_i,w_{j+1},\dots,w_m).
	\end{equation*}
	In other words, we put $v_k,e_1,\dots,e_i$ instead of $v_k$ in $w$. Note that this mapping is clearly not surjective -- for example $(v_k,e_1)$ will always be a path that will not be attained by this map. Similarly as in the first procedure we can dualize this construction to paths that start on the fixed path $(e_1,\dots,e_n)$. Of course this dual construction also increases the total amount of paths.
	\item \emph{Resolving alternative paths}. Suppose that we two paths $e=(e_1,\dots,e_n),v=(v_1,\dots,v_m)$, with $m\ge n>1$, such that $s_E(e_1)=s_E(v_1)$ and $t_E(e_n)=t_E(v_m)$, and there is no path connecting vertices that $e$ and $v$ run through. We want to merge vertices that $v$ runs through with vertices that $e$ runs through -- this operation reduces the total amount of vertices, but the total amount of paths increases. Explicitly, we construct a graph $H=(H^0,H^1,s_H,t_H)$ as follows: Set the vertices to be
	\begin{equation*}
		H^0 := E^0 \setminus \{ t(v_k):k=1,\dots,m-1 \}
	\end{equation*}
	and the edges remain the same $H^1:=E^1$. We change the source and target maps to reflect the process of merging together our desired vertices
	\begin{align*}
		& s_H(e) := \begin{cases}
			s_E(e_i) & \text{ if } e=v_i \text{ for some }i\in\{ 1,\dots,m \},\\
			s_E(e) & \text{otherwise},
		\end{cases}\\
		& t_H(e) := \begin{cases}
			t_E(e_i) & \text{ if } e=v_i \text{ for some }i\in\{ 1,\dots,m \},\\
			t_E(e) & \text{otherwise}.
		\end{cases}
	\end{align*}
	Our final graph $H$ clearly is connected and has the same amount of edges that $E$ does and our assumption of having no paths connecting that $v$ and $e$ run through we get that $H$ must still be acyclic. Thus $H\in\mathcal{E}^N$. In order to see that $P^N(H)>P^N(E)$ observe that any path of positive length $w$ in $E$ is still a valid path in $H$, but we get more -- in our new graph $H$ we can mix the $v_i$'s with $e_j$'s, hence we get at least $2^m-2$ new edges, while losing $m-1$ vertices, which is a net positive under our assumption $m>1$.
\end{enumerate}
Having all of these moves defined we modify our graph $E$ in the following way. First, we split all sinks and sources and let $F$ be the final product of all of these 
procedures. The graph $F$ may be disconnected and we shall proceed by considering every connected component of $F$ separately -- hence let us assume that 
$F$ is connected. Next, we can identify all instances of a path splitting into multiple paths and merges together later on. Suppose that we have two paths $e=(e_1,\dots,e_n)$ and $v=(v_1,\dots,v_m)$. It might be the case that there exists a path connecting two vertices that $v$ and $e$ runs through, say $w$ is a path such that $s(w)=s_E(v_i)$ and $t(w)=t_E(e_j)$ for some $i=1,\dots,m-1$ and $j=2,m$. Therefore we can first consider the splitting of paths that certainly occurs at $s_E(v_i)$, since $(w,e_{j+1},\dots,e_n)$ (or in the case $j=m$, just $w$ by itself) and $(v_i,\dots,v_m)$ are two paths with the same source and target. Note that if we resolve the alternative routes which are $(w,e_{j+1},\dots,e_n)$ and $(v_i,\dots,v_m)$, then the paths $(v_1,\dots,v_{i-1})$ and $e_1,\dots,e_j$ form a new splitting of paths, provided that $i>1$, which we can take care of afterwards. We can therefore iteratively resolve all alternative path obtaining a new graph that we shall denote by $G$. The graph $G$ is now just a collection of paths that cross each other in a way that is acyclic. We can therefore pick any path in $G$ and comb all stray paths that are either going into and outgoing from our chosen graph. Thus the end product is a graph that lies in $\mathcal{T}^N$ and since all of our procedures only increase the total amount of paths, then our final product also has a larger total amount of paths. Thus for any graph $E\in\mathcal{E}^N$ we can find a better graph that lies in $\mathcal{T}^N$, which finishes the proof.
\end{proof}

\begin{proposition} \label{22n}
$|\mathcal{T}^N|=2^{N-1}$.
\end{proposition}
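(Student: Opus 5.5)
The plan is to reduce the count to compositions of $N$. By the definition of a trunk graph and the notation introduced there, each trunk graph with $N$ edges is encoded by a tuple $(m_1,\ldots,m_{n-1})$ of edge multiplicities between consecutive vertices $v_i\to v_{i+1}$. Since a trunk graph is an $N$-tree graph, it is connected with $E^0=s_E(E^1)\cup t_E(E^1)$. Connectedness along the linear order forces every $m_i\geq 1$: if some $m_i=0$, then no edge joins $\{v_1,\ldots,v_i\}$ to $\{v_{i+1},\ldots,v_n\}$, because all edges go from $v_j$ to $v_{j+1}$. The weight condition gives $m_1+\cdots+m_{n-1}=N$. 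So every element of $\mathcal{T}^N$ determines a composition of $N$ into positive parts.

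Next I will check that this assignment is a bijection onto the set of compositions of $N$, counting graphs up to isomorphism, which is implicit in the statement.

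\emph{Injectivity.} Two trunk graphs with the same tuple are clearly isomorphic. Conversely, the ordering $v_1,\ldots,v_n$ of a trunk graph is essentially forced. The graph is acyclic, and its underlying undirected multigraph is a path on $n$ vertices. Hence $v_1$ is the unique source and the order is recovered by following edges. So the tuple is an isomorphism invariant.

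\emph{Surjectivity.} For any composition $(m_1,\ldots,m_{n-1})$ of $N$, the graph with vertices $v_1,\ldots,v_n$ and $m_i$ parallel edges $v_i\to v_{i+1}$ is acyclic and connected. It has no isolated vertices, so it lies in $\mathcal{T}^N$.

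Finally I will use the standard count that compositions of $N$ number $2^{N-1}$. Place $N$ units in a row. Among the $N-1$ gaps between them, choose which gaps are cuts; the blocks between cuts are the parts $m_1,\ldots,m_{n-1}$. Each subset of the gaps gives exactly one composition, so there are $2^{N-1}$ compositions and hence $|\mathcal{T}^N|=2^{N-1}$.

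The only real obstacle is the well-definedness in the injectivity step: that the vertex ordering, and hence the tuple, is uniquely determined by the isomorphism class. I would settle it by noting that $v_1$ is the only vertex with no incoming edges, and that each $v_{i+1}$ is the unique out-neighbour of $v_i$.
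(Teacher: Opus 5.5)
Your proof is correct. It differs from the paper's main argument mainly in how the compositions are counted.

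The paper never states the identification of trunks with compositions explicitly, and it does not check that the tuple is an isomorphism invariant. Instead it proves the recursion $|\mathcal{T}^N|=2|\mathcal{T}^{N-1}|$. Every trunk of weight $N$ arises uniquely from one of weight $N-1$ in one of two ways: by prepending a single edge, which gives $m_1=1$, or by adding a parallel edge between $v_1$ and $v_2$, which gives $m_1\geq 2$. The paper then inducts from $|\mathcal{T}^1|=1$.

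Your choice-of-cuts bijection is a direct, non-inductive version of the same count. It essentially matches the paper's alternative remark after the proof, where trunks of length $N-k$ are counted by $\binom{N-1}{k}$ and summed, but you do it in one step. Your route also gives the refined count $\binom{N-1}{n-2}$ of trunks with $n$ vertices at no extra cost. The paper's recursion gives only the total.

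Your explicit check of well-definedness up to isomorphism (the unique source, and the unique out-neighbour of each vertex) fills in a point the paper leaves tacit.
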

\begin{proof} 
Note that $|\mathcal{T}^N|=|\mathcal{T}^{N-1}|+|\mathcal{T}^{N-1}|$ as we can obtain an element of $\mathcal{T}^{N}$ from an element of 
$\mathcal{T}^{N-1}$ in the following two ways: by appending an arrow to the beginning or adding an additional arrow between the first and second vertex. 
These constructions yield disjoint distributions of $m_1$ though $m_k$ (those where $m_1=1$ and those $m_1\geq 2$). These account for all elements of 
$\mathcal{T}^N$. Thus, as  $|\mathcal{T}^1|=1$, we get that $|\mathcal{T}^N|=2^{N-1}$.
\end{proof}

Note that one can also prove the above proposition by observing that the number of all partitions of the set with $m$ elements into $n$ subsets 
(empty subsets allowed) equals $\binom{m+n-1}{m}$ and the number of trunks of  weight $N$ and length $N-k$ is exactly the number of all partitions of the 
set of $k$ edges into $N-k$ many subsets. Consequently,
\begin{equation}
|\mathcal{T}^N|=\sum_{k=0}^{N-1}\binom{N-1}{k}=2^{N-1}.
\end{equation}

\newpage 
\subsection{Experimental data}
We used Python  to find out which trunks of weight $N$ maximize the number of all paths.
However, as indicated by Proposition~\ref{22n}, this required checking $2^{N-1}$ cases, so the calculations for $N>25$ were taking too long.

\phantom{.}\\
\begin{tabular}{lllll}
\rowcolor{Gray}
$N$ edges         & Maximal number of paths   & Optimal trunks  &  Length of longest path \\
\rowcolor{LGray}
3&3&(1,1,1)&3\\

4&15&(1,1,1,1)&4\\
\rowcolor{Gray}
5&21&(1,1,1,1,1), (1,1,2,1),(1,2,1,1)&4 or 5\\
\rowcolor{LGray}
6&31&(1,2,2,1)&4\\

7&44&(1,1,2,2,1),(1,2,2,1,1)&5\\
\rowcolor{Gray}
8&66&(1,2,2,2,1)&5\\
\rowcolor{LGray}
9&91&(1,1,2,2,2,1),(1,2,2,2,1,1),(1,2,3,2,1)& 5 or 6\\

10&137&(1,2,2,2,2,1)&6\\
\rowcolor{Gray}
11&192&(1,2,2,3,2,1),(1,2,3,2,2,1)&6\\
\rowcolor{LGray}
12&280&(1,2,2,2,2,2,1)&7\\

13&401&(1,2,2,3,2,2,1)&7\\
\rowcolor{Gray}
14&571&(1,2,2,3,3,2,1),(1,2,3,3,2,2,1)&7\\
\rowcolor{LGray} 
15&820&(1,2,2,2,3,2,2,1),(1,2,2,3,2,2,2,1)&8 \\ \hline \hline
16&1194&(1,2,2,3,3,2,2,1)&8\\
\rowcolor{Gray}
17&1709&(1,2,2,3,3,3,2,1),(1,2,3,3,3,2,2,1)&8\\
\rowcolor{LGray}
18&2449&(1,2,3,3,3,3,2,1)&8\\

19&3574&(1,2,2,3,3,3,2,2,1)&9\\
\rowcolor{Gray}
20&5124&(1,2,2,3,3,3,3,2,1),(1,2,3,3,3,3,2,2,1)&9\\
\rowcolor{LGray}
21&7349&(1,2,3,3,3,3,3,2,1)&9\\

22&10715&(1,2,2,3,3,3,3,2,2,1)&10\\
\rowcolor{Gray}
23&15370&(1,2,2,3,3,3,3,3,2,1),(1,2,3,3,3,3,3,2,2,1)&10\\
\rowcolor{LGray}
24&22050&(1,2,3,3,3,3,3,3,2,1)&10\\
25&32139&(1,2,2,3,3,3,3,3,2,2,1)&11\\

\end{tabular}

\subsection{Main result}
\begin{lemma}\label{lem1}
The following equality holds:
$$
\sup_{E\in\mathcal{T}^N}P^N\!(E)=\sup_{E\in\mathcal{T}^N_3}P^N\!(E).
$$
\end{lemma}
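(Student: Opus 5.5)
The plan is to show that any trunk containing a bundle of at least $4$ parallel edges can be replaced by another trunk of the same weight $N$ with strictly more paths. The new trunk is obtained by cutting that bundle into two consecutive bundles. Iterating this strictly increases $P^N$, and $\mathcal{T}^N$ is finite (Proposition~\ref{22n}), so the supremum over $\mathcal{T}^N$ is attained at a trunk all of whose multiplicities are $\le 3$. The experimental data show optimal trunks of width $2$ for some $N$ (for instance $N=8$), so I will read $\mathcal{T}^N_3$ as trunks of width at most $3$. This reading is forced: the argument below only eliminates multiplicities $\ge 4$ and cannot raise a width-$2$ optimum to width $3$.

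\textbf{Path count for a trunk.} For a trunk $E=(m_1,\ldots,m_{n-1})$ we have
$$P^N(E)=n+\sum_{1\le i\le j\le n-1}\ \prod_{l=i}^{j} m_l .$$
Fix an index $i$ and put $m:=m_i$. Let $A$ be the number of paths (including the length-$0$ one) ending at $v_i$ and lying in $v_1\to\cdots\to v_i$. Let $B$ be the number of paths starting at $v_{i+1}$ and lying in $v_{i+1}\to\cdots\to v_n$. The paths of $E$ that use an edge of the $i$-th bundle are counted exactly by $A\,m\,B$. All other paths live in the two halves of the trunk, and their count does not depend on the $i$-th bundle.

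\textbf{The move.} Suppose $m\ge 4$. Let $E'$ be the trunk $(m_1,\ldots,m_{i-1},2,m-2,m_{i+1},\ldots,m_{n-1})$, which has the same weight $N$ and is still a connected acyclic trunk. The paths of $E'$ avoiding the two new bundles and the new middle vertex are the same as before. The paths meeting the new part are of four kinds:
\begin{itemize}
\item those crossing both new bundles, counted by $2(m-2)AB$;
\item those ending at the new vertex, counted by $2A$;
\item those starting at the new vertex, counted by $(m-2)B$;
\item the new vertex itself, counted by $1$.
\end{itemize}
Hence
$$P^N(E')-P^N(E)=\bigl(2(m-2)-m\bigr)AB+2A+(m-2)B+1=(m-4)AB+2A+(m-2)B+1>0$$
whenever $m\ge4$.

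\textbf{Obstacle.} The only delicate point is the boundary case $m=4$. There the crossing terms tie exactly, since $2\cdot 2=4$, and strictness comes from the lower-order terms $2A+(m-2)B+1$, which are positive because $A,B\ge1$. One should also check that splitting into $(1,m-1)$ would not work: the crossing coefficient becomes $m-1<m$, which is why the split $(2,m-2)$ is chosen. Finally, Lemma~\ref{lem0} reduces everything to trunks, so no further structural issues arise, and the equality of suprema follows.
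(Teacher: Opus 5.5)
Your proof is correct and follows the paper's argument: you split a bundle of multiplicity $m\ge 4$ into two consecutive bundles through a new vertex and compare the paths through it using the left/right counts ($A,B$ for you, $L,R$ in the paper). The only differences are that you split as $(2,m-2)$ where the paper uses $(\lfloor m/2\rfloor,\lceil m/2\rceil)$ (the two agree for $m=4,5$), and that you handle the $m=4$ tie explicitly and read $\mathcal{T}^N_3$ as width at most $3$, which the paper's proof also tacitly assumes.
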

\begin{proof}
Let $E\in\mathcal{T}^N$ and suppose that we have an edge $e\in E^1$ which has a label $n>3$ -- here we consider $E$ as a simple and labeled graph with labels coming from $\mathbb{N}$. We can blow such an edge up into two edges $e_1,e_2$ such that the label of $e^1$ is $\lfloor n/2\rfloor$ and the label of $e^2$ is $\lceil n/2 \rceil$. Explicitly, we define a graph $F=(F^0,F^1,s_F,t_F)$ as follows: Let the set of vertices and edges to be
\begin{align*}
	F^0 := E^0\sqcup\{\epsilon\}, && F^1 = \left(E^1\setminus \{e\}\right)\sqcup \{e^1,e^2\}.
\end{align*}
Next, we need to define the source and target maps:
\begin{align*}
	& s_F(w) := \begin{cases}
		s_E(e) & \text{ if }w=e^1,\\
		\epsilon & \text{ if }w=e^2,\\
		s_E(w) & \text{ otherwise}.
	\end{cases}\\
	& t_F(w) := \begin{cases}
		\epsilon & \text{ if }w=e^1,\\
		t_E(e) & \text{ if }w=e^2,\\
		t_E(w) & \text{ otherwise}.
	\end{cases}\
\end{align*}
Lastly, we need the labeling $l_F:F^1\to\mathbb{N}$ and it is defined as follows
\begin{equation*}
	l_F(w) := \begin{cases}
		\left\lfloor \frac{n}{2}\right\rfloor &\text{ if }w=e^1,\\
		\left\lceil \frac{n}{2} \right\rceil & \text{ if }w=e^2,\\
		l_E(w) &\text{ otherwise}.
	\end{cases}
\end{equation*}
Clearly any path in $E$ that does not contain $e$ corresponds to a unique path in $F$ that does not contain either $e^1$ or $e^2$. Let $L$ be the amount of paths 
in $E$ that end at $s_E(e)$ and $R$ be the amount of paths in $E$ that start at $t_E(e)$. Then the total amount of paths that contain $e$ is 
\begin{equation*}
	T_E = LRn.
\end{equation*}
On the other hand, the total amount of paths in $F$ that contain either $e^1$ or $e^2$ is
\begin{equation*}
	T_F +1= LR\left\lfloor \frac{n}{2}\right\rfloor\left\lceil \frac{n}{2} \right\rceil + L\left\lfloor \frac{n}{2}\right\rfloor + R\left\lceil \frac{n}{2} \right\rceil.
\end{equation*}
As $n\ge4$, it is clear that the first term of $T_F$ already overpowers $T_E$, thus our construction can only increase the total number of paths. Iterating this for all 
edges with label at least $4$ we get that for any graph $E\in\mathcal{T}^N$ we can find a better candidate that lies in $\mathcal{T}_3^N$, which finishes the proof.
\end{proof}
\begin{lemma}\label{lem2}
Let $F\in \mathcal{T}^N_3$. Then, if $\sup_{E\in\mathcal{T}^N_3}P^N\!(E)=P^N\!(F)$, we have
\begin{equation}\label{3,1}
\forall\; i\in\{1,\ldots,n-1\}\colon |s_F^{-1}(v_i)-s_F^{-1}(v_{i+1})|\leq 1,
\end{equation}
where $F^0=\{v_1,\dots,v_n\}$.
\end{lemma}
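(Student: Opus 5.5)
The plan is a local exchange argument. Since $F$ is a trunk $(m_1,\dots,m_{n-1})$ of width at most $3$, every $m_i\in\{1,2,3\}$. I read the condition as a statement about the cardinalities $m_i=|s_F^{-1}(v_i)|$, with $|s_F^{-1}(v_n)|=0$. A violation of \eqref{3,1} is then one of two things:
\begin{itemize}
\item an interior pair $(m_i,m_{i+1})\in\{(1,3),(3,1)\}$;
\item at the last index $i=n-1$, a value $m_{n-1}\in\{2,3\}$.
\end{itemize}
In each case I will exhibit a trunk in $\mathcal{T}^N_3$ with the same number $N$ of edges and strictly more paths. This contradicts the maximality of $P^N(F)$.

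\textbf{Bookkeeping.} Let $A$ be the number of paths (including the trivial one) ending at $v_i$, and $B$ the number of paths (including the trivial one) starting at $v_{i+2}$. Both depend only on the multiplicities away from positions $i,i+1$, so $A,B\ge 1$ are unchanged by the modifications below. Paths avoiding the edges at positions $i$ and $i+1$ are unaffected, and so is the vertex count when $n$ is kept fixed. Paths using these edges fall into three classes, as in the proof of Lemma~\ref{lem1}:
\begin{itemize}
\item $A\,m_i$ paths use position $i$ but not position $i+1$;
\item $m_{i+1}B$ paths use position $i+1$ but not position $i$;
\item $A\,m_im_{i+1}B$ paths use both.
\end{itemize}
Hence the relevant contribution is $A m_i+m_{i+1}B+Am_im_{i+1}B$.

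\textbf{Interior violations.} Replace $(1,3)$ or $(3,1)$ by $(2,2)$. This keeps the weight $N$, the number of vertices, and width at most $3$. For $(1,3)\mapsto(2,2)$ the gain is
\[
(2A+2B+4AB)-(A+3B+3AB)=A-B+AB\ \ge A>0,
\]
using $AB\ge B$. Symmetrically, for $(3,1)\mapsto(2,2)$ the gain is $B-A+AB\ge B>0$.

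\textbf{Endpoint violation.} Suppose $m_{n-1}=m\in\{2,3\}$. Replace the last block of $m$ edges by two blocks $(m-1,1)$, introducing a new final vertex; the weight is still $N$. With $A$ now the number of paths ending at $v_{n-1}$, the affected quantity before the change is $Am+1$: the paths through the last block plus the vertex $v_n$. After the change it is
\[
(m-1)A+\bigl(1+(m-1)A\bigr)+2,
\]
counting paths through the new blocks together with the two final vertices. The gain is $(m-2)A+2>0$.

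\textbf{Main obstacle.} The delicate point is to get the bookkeeping exactly right. First, the counts $A,B$ must really be unaffected by the local change. Second, the trivial paths (the vertices) must be counted consistently, especially in the endpoint case, where the vertex set changes.

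A further interpretive issue is whether the statement also imposes an implicit condition $m_1\le 1$ at the start. This is not literally required by \eqref{3,1} as written. If it is wanted, it follows from the dual endpoint move $(m,\dots)\mapsto(1,m-1,\dots)$ by the same computation, with $B$ in place of $A$.
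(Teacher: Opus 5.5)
Your proof is correct and follows the paper's route: the paper also moves one edge across the middle vertex, replacing $(1,3)$ by $(2,2)$. Your count $Am_i+m_{i+1}B+Am_im_{i+1}B$ is more careful than the paper's $T_E=L+3LR$, $T_H=2L+4LR$, which drops the paths starting at the middle vertex (the conclusion holds either way since $AB\ge B$), and your extra endpoint move $(\dots,m)\mapsto(\dots,m-1,1)$ for the literal case $i=n-1$, which the paper's proof tacitly ignores by comparing only consecutive edge multiplicities, is valid and in the spirit of Lemma~\ref{lem3}.
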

\begin{proof}
Given the fact that we are working with trunk graphs with at most $3$ edges between two consecutive vertices the condition $\eqref{3,1}$ boils down to proving that we must not have a single edge followed by $3$ edges (or vice versa). Consider the following construction:

Let $E\in\mathcal{T}_3^N$. Suppose that in our graph $E$ we have three vertices $v_1,v_2,v_3$ such that there is a single edge $e_1$ connecting $v_1$ and $v_2$, and three edges connecting $v_2$ and $v_3$, say $e_2,e_3,e_4$. Consider the graph $H=(H^0,H^1,s_H,t_H)$ that is the transfer of $e_2$ to vertices $v_1$ and $v_2$, i.e. we set $H^0:=E^0, H^1:=E^1$ and we change the source and target functions accordingly:
\begin{align*}
	s_H(e) := \begin{cases}
		v_1 & \text{ if }e = e_2,\\
		s_E(e) & \text{ otherwise},
	\end{cases}\\
	t_H(e) := \begin{cases}
		v_2 & \text{ if }e = e_2,\\
		t_E(e) & \text{ otherwise}.
	\end{cases}
\end{align*}
Clearly $H\in\mathcal{T}_3^N$ and observe that we have a bijection of paths not going through any of vertices $v_1,v_2,v_3$ between $E$ and $H$. Let us denote by $L\ge1$ the amount of paths ending at $v_1$ and by $R\ge1$ the amount of paths ending at $v_3$. The total amount of paths in $E$ that go through either $v_1,v_2$ or $v_3$ is given by
\begin{equation*}
	T_E = L + 3LR,
\end{equation*}
whereas the total amount of paths in $H$ that go through either of these vertices is given by
\begin{equation*}
	T_H = 2L + 4LR.
\end{equation*}
As $L\ge1$, then we see that $T_H\ge T_E$, thus this construction can only increase the total amount of paths in our graph. Of course this works the same when considering a situation where we have first $3$ edges and then a single edge. In particular this means that our optimal graph $F$ must not have such pairs of consecutive amounts of edges.
\end{proof}
Let us denote the set of all graphs in $\mathcal{T}^N_3$ satisfying \eqref{3,1} by~$\mathcal{T}^N_{3,1}$.
\begin{lemma}\label{lem3}
Let $F\in \mathcal{T}^N_{3,1}$ be such that $\sup_{E\in\mathcal{T}^N_{3,1}}P^N\!(E)=P^N\!(F)$. Then $|s_F^{-1}(v_i)|=1$ for  $i=1,n-1$, where $F^0=\{v_1,\dots,v_n\}$.
\end{lemma}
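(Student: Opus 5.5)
We argue by contradiction at the two ends of the trunk, using a local move that strictly increases $P^N$ and stays inside $\mathcal{T}^N_{3,1}$. Write $F=(m_1,\ldots,m_{n-1})$ with $m_i\in\{1,2,3\}$ and $|m_i-m_{i+1}|\le1$. Reversing all edges of a trunk gives a trunk with the same number of paths, and it preserves membership in $\mathcal{T}^N_{3,1}$. So it suffices to show $m_1=1$; the statement for $i=n-1$ then follows by this mirror symmetry.

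Suppose $m_1\ge2$. The move is to take one of the $m_1$ edges from $v_1$ to $v_2$ and turn it into an edge into $v_1$ from a new source vertex $u$. This produces the trunk $F'=(1,m_1-1,m_2,\ldots,m_{n-1})$, which still has weight $N$.

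We count paths. Let $R\ge1$ be the number of paths in $F$ starting at $v_2$, with the trivial path included. Paths not passing through $v_1$ or $u$ are in bijection between $F$ and $F'$.
\begin{itemize}
\item In $F$, the remaining paths are those starting at $v_1$, and there are $1+m_1R$ of them.
\item In $F'$, there are $1+(m_1-1)R$ paths starting at $v_1$.
\item In $F'$, there are $1+\bigl(1+(m_1-1)R\bigr)$ paths starting at $u$.
\end{itemize}
Hence
$$P^N(F')-P^N(F)=3+2(m_1-1)R-(1+m_1R)=2+(m_1-2)R\ \ge 2>0,$$
so $F'$ has strictly more paths.

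It remains to check that $F'$, or a further improvement of it, lies in $\mathcal{T}^N_{3,1}$. Clearly $F'\in\mathcal{T}^N_3$. Since $m_1\le3$, we have $|1-(m_1-1)|\le1$. The only possible violation of \eqref{3,1} is $|(m_1-1)-m_2|=2$, which happens exactly when $m_1=2$ and $m_2=3$, giving $F'=(1,1,3,m_3,\ldots)$. In that case we apply the transfer move from the proof of Lemma~\ref{lem2} to the pattern $(1,3)$, which does not decrease the number of paths. This yields $F''=(1,2,2,m_3,\ldots)$. Because $m_2=3$ in $F$ and $F\in\mathcal{T}^N_{3,1}$, we have $m_3\in\{2,3\}$ (or the trunk ends there), so $F''\in\mathcal{T}^N_{3,1}$. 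In every case we obtain a graph in $\mathcal{T}^N_{3,1}$ with strictly more paths than $F$, which contradicts the maximality of $F$.

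We also handle the degenerate case $n-1=1$, i.e.\ $F=(N)$ with $N\in\{2,3\}$. The same move gives $(1,N-1)$, which lies in $\mathcal{T}^N_{3,1}$, and the same computation applies with $R=1$. The main step needing care is the compatibility check with \eqref{3,1}; the repair in the case $(2,3)$ via Lemma~\ref{lem2} is the only place where a second move is needed, and strictness is already guaranteed by the first move.
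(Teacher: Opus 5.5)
Your proof is correct. It uses the same basic idea as the paper: detach edges leaving $v_1$ and re-attach them as new sources to the left. The execution differs in a way that matters, though.

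\textbf{How the paper argues.} The paper detaches all $k=m_1$ edges at once, obtaining $(1,\dots,1,m_2,\dots)$ with $k$ ones. It gets the strict gain from an injection $FP(F)\hookrightarrow FP(F')$ whose image misses the new vertices; a path beginning with $e_i$ is sent to $(e_i,e_{i-1},\dots,e_1,\dots)$. This avoids any counting. However, the paper never checks that $F'\in\mathcal{T}^N_{3,1}$, and in general it is not: $(2,3,\dots)\mapsto(1,1,3,\dots)$ and $(3,3,\dots)\mapsto(1,1,1,3,\dots)$ both violate \eqref{3,1}. To complete the paper's route one must add that, by Lemma~\ref{lem2} and finiteness of $\mathcal{T}^N_3$, $\sup_{\mathcal{T}^N_{3,1}}P^N=\sup_{\mathcal{T}^N_3}P^N$. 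Then a maximizer over $\mathcal{T}^N_{3,1}$ is also one over $\mathcal{T}^N_3$.

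\textbf{What your route buys.} Your one-edge move has the exact gain $2+(m_1-2)R$. It confines the possible violation of \eqref{3,1} to the single pattern $(2,3)$, and you repair that case explicitly. So your contradiction takes place inside $\mathcal{T}^N_{3,1}$ itself, which is what the statement requires.

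\textbf{One thing to tighten.} In the repair you cite the transfer from the proof of Lemma~\ref{lem2}. Its displayed counts $T_E=L+3LR$ and $T_H=2L+4LR$ are not right, although its conclusion is. Let $L$ be the number of paths ending at the tail of the single edge and $R$ the number starting at the head of the triple edges. The correct difference is $(1+2L+2R+4LR)-(1+L+3R+3LR)=L+(L-1)R\ge 1$; in your situation $L=2$. Alternatively, compare $(2,3,m_3,\dots)$ with $(1,2,2,m_3,\dots)$ directly. The gain is $4+R$, where $R$ is the number of paths in $F$ starting at $v_3$.
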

\begin{proof}
Suppose that for the first vertex $v_1\in F^1$ we have $s_F^{-1}(v_1)=\{e_1,\dots,e_k\}$ with $k>1$. Consider the graph $F'=((F')^0,(F')^1,s_{F'},t_{F'})$ defined as follows: 
\begin{align*}
	& (F')^0 := F\sqcup\{ v^i:i=2,\dots,k \},\\
	& (F')^1 := F^1,\\
	& s_{F'}(e) := \begin{cases}
		v^{i} &\text{ if }e = e_i\text{ for some }i=2,\dots,k,\\
		s_{F}(e) &\text{ otherwise}.
	\end{cases}\\
	& t_{F'}(e) := \begin{cases}
		v^{i-1} &\text{ if }e=e_i \text{ for some }i=3,\dots,k,\\
		v_1 &\text{ if }e=e_2,\\ 
		t_{F}(e) & \text{ otherwise}.
	\end{cases}
\end{align*}
Graphically, $F'$ comes from taking the leftmost edges of $F$ and 'spreading them' to the left, extending the graph to the left. There is a clear injection $PF\hookrightarrow PF'$ that sends a path $w$, which does not contain any of the $e_i$'s, in $F$ to the same path $w$, considered as a path in $F'$. Any path $w=(e_i,\dots)$ that starts with an edge $e_i$ with $i\ge1$ we shall send to the appropriate extension:  
\begin{equation*}
	w' = (e_i,e_{i-1},\dots,e_1,\dots).
\end{equation*}
As $k>1$, then this injection is not surjective as $PF'$ contains strictly more vertices and any new vertex in $F'$ is not attained by our mapping. A dual argument implies that $F$ necessarily has to have only one edge pointing to the unique sink of $F$.
\end{proof}

Given a graph $E\in  \mathcal{T}^N_3$ with $E^0=\{v_1,\dots ,v_n\}$ let $E_{L,m}$ be the full subgraph of $E$ generated by vertexes $\{v_1,\dots ,v_{m-1} \}$ and let $L_m$ denote the number of paths in $E_{L,m}$ ending in vertex $v_{m-1}$.
Note that $L_1=0$, $L_{m+1}=1+\vert s^{-1}(v_{m-1})\vert L_m$, therefore for any $k<m$ the inequality $L_k < L_m$ holds.
Analogously let $E_{R,m}$ be the full subgraph of $E$ generated by vertexes $\{v_{m+1},\dots ,v_n\}$ and let $R_m$ denote the number of paths in $E_{R,m}$ beggining in vertex $v_{m+1}$.
Note that $R_n=0$, $R_{m-1}=1+\vert s^{-1}(v_m) \vert R_m$, therefore for any $k<m$ the inequality $R_m < R_k$ holds.

\begin{lemma}\label{lem4}
Let $F\in \mathcal{T}^N_3$. If $\sup_{E\in\mathcal{T}^N_3}P^N\!(E)=P^N\!(F)$, then for $k=\min\{ m\colon L_{m+1} \ge R_{m+1} \}$ the following inequalities hold:
\begin{displaymath}
\forall_{1\le i < k} \vert s^{-1}(v_i) \vert \le \vert s^{-1}(v_{i+1}) \vert \ \textrm{ and } \ \forall_{k\le i \le n-2} \vert s^{-1}(v_{i+1}) \vert \le \vert s^{-1}(v_i) \vert
\end{displaymath}
where $F^0=\{v_1,\dots,v_n\}$.
\end{lemma}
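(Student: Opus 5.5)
The plan is a local exchange argument: swap the multiplicities of two consecutive edge bundles and compare path counts. Write $m_i:=|s^{-1}(v_i)|$, so that $F=(m_1,\dots,m_{n-1})$. Fix $1\le i\le n-2$ and let $F_i'$ be the trunk $(m_1,\dots,m_{i-1},m_{i+1},m_i,m_{i+2},\dots,m_{n-1})$, obtained by swapping the bundles between $v_i,v_{i+1}$ and between $v_{i+1},v_{i+2}$. Clearly $F_i'\in\mathcal{T}^N_3$, since it has the same weight and the same width.

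Every path in a trunk is determined by a consecutive interval of bundles together with a choice of one edge in each bundle. So I would split the paths of $F$ and of $F_i'$ into four classes:
\begin{itemize}
\item paths using neither bundle $i$ nor bundle $i+1$ (this class includes all vertices);
\item paths using bundle $i$ but not bundle $i+1$;
\item paths using bundle $i+1$ but not bundle $i$;
\item paths using both bundles.
\end{itemize}
Let $A$ be the number of paths ending at $v_i$ in the full subgraph on $v_1,\dots,v_i$; this is $L_{i+1}$, and it includes the trivial path. Let $B$ be the number of paths starting at $v_{i+2}$ in the full subgraph on $v_{i+2},\dots,v_n$; this is $R_{i+1}$. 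Both numbers depend only on $m_1,\dots,m_{i-1}$ and $m_{i+2},\dots,m_{n-1}$, so they are the same for $F$ and $F_i'$. The first class is identical in the two graphs. The second and third classes contribute $Am_i+m_{i+1}B$ in $F$, and $Am_{i+1}+m_iB$ in $F_i'$. The fourth class contributes $Am_im_{i+1}B$ in both. Hence
\begin{equation*}
P^N(F_i')-P^N(F)=(A-B)(m_{i+1}-m_i)=(L_{i+1}-R_{i+1})(m_{i+1}-m_i).
\end{equation*}

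Maximality of $F$ forces this difference to be $\le 0$. If $L_{i+1}<R_{i+1}$ we get $m_i\le m_{i+1}$, and if $L_{i+1}>R_{i+1}$ we get $m_{i+1}\le m_i$. Next I would use the monotonicity noted just before the lemma: $L_m$ is strictly increasing and $R_m$ is strictly decreasing in $m$, both computed in $F$. Consequently $L_{m+1}-R_{m+1}$ is strictly increasing in $m$. By the definition of $k$, we have $L_{i+1}<R_{i+1}$ for all $i<k$, which gives the first family of inequalities. For $i>k$ we have $L_{i+1}>L_{k+1}\ge R_{k+1}>R_{i+1}$, which gives the second family for those indices.

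The main obstacle is the boundary index $i=k$ when $L_{k+1}=R_{k+1}$. In that case the swap does not change $P^N$, so maximality alone does not force $m_{k+1}\le m_k$. I would first try to rule out the tie by a parity or size argument on the recursions $L_{m+1}=1+m_{m-1}L_m$ and $R_{m-1}=1+m_mR_m$. If that fails, I would observe that in a tie with $m_k<m_{k+1}$ the swapped graph $F_k'$ is also a maximizer. I would then either read the lemma as asserting the existence of a maximizer with the stated shape, or show that the tie together with $m_k<m_{k+1}$ lets a further swap at $i=k+1$ or $i=k-1$ strictly increase $P^N$. That would yield a contradiction, because after the swap $L_{k+2}$ changes while $R_{k+2}$ does not. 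I would also double-check the index conventions of $L_m$ and $R_m$, in particular that $L_{i+1}$ really counts paths ending at $v_i$, since the whole argument hinges on matching $A,B$ to them.
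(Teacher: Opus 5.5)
Your swap computation is the same as the paper's. Exchanging $m_i$ and $m_{i+1}$ gives $P^N(F_i')-P^N(F)=(L_{i+1}-R_{i+1})(m_{i+1}-m_i)$, and your identification $A=L_{i+1}$, $B=R_{i+1}$ matches the paper's conventions. Together with strict monotonicity of $L_m$ and $R_m$, this proves both families of inequalities for every $i\neq k$, and also at $i=k$ whenever $L_{k+1}>R_{k+1}$.

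\textbf{The tie is a genuine gap, and it cannot be closed.} The statement is false when $L_{k+1}=R_{k+1}$, so your first and third remedies cannot work: you cannot exclude the tie, and no neighbouring swap gives a strict improvement.

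A counterexample is $N=11$ and $F=(1,2,2,3,2,1)$.
\begin{itemize}
\item $F$ has width exactly $3$ and $192$ paths. This is the maximum listed in the paper's table, and one can confirm it by checking the few admissible unimodal $\{1,2,3\}$-sequences with end entries $1$ and sum $11$.
\item Here $L_2=1<R_2=33$, $L_3=2<R_3=16$ and $L_4=R_4=5$, so $k=3$.
\item Yet $m_4=3>m_3=2$, violating $m_{k+1}\le m_k$.
\end{itemize}
Since $F$ is a genuine maximizer, no move strictly increases $P^N$. Its swap $F_3'=(1,2,3,2,2,1)$ is the other maximizer in the table. If width $\le 3$ is allowed, a smaller instance is $(1,1,2,1)$ for $N=5$, where $L_3=R_3=2$.

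The paper's own proof has exactly this hole. For $i\ge k$ it infers $y\le x$ from $(x-y)(L_{i+1}-R_{i+1})\ge 0$, which gives nothing when $L_{k+1}=R_{k+1}$.

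The correct repair is your second option, in one of two forms:
\begin{itemize}
\item Claim the shape only for \emph{some} maximizer. In the tie case, replace $F$ by $F_k'$. You must check that $k$ is unchanged: $L_{m+1}$ for $m\le k$ and $R_{k+1}$ are unaffected by the swap, and $L_{m+1}<R_{m+1}$ for $m<k$ survives it.
\item For every maximizer, assert only the inequalities with $i\neq k$. These already give $m_1\le\dots\le m_k$ and $m_{k+1}\ge\dots\ge m_{n-1}$, i.e.\ unimodality, which is all Corollary~\ref{edgeseq} uses.
\end{itemize}
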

\begin{proof}
First note that $L_n\ge n$ and $R_{n}=0$ therefore the set $\{ m\colon L_{m+1} \ge R_{m+1} \}$ is nonempty.
Moreover for $i>k$ the inequalities $L_{i+1}\ge L_{k+1}\ge R_{k+1}\ge R_{i+1}$ show that $\{ m\colon L_{m+1} \ge R_{m+1} \}=\{m\colon k\le m\}$.
Let us fix $i$, denote  $x=\vert s^{-1}(v_i) \vert$, $\vert s^{-1}(v_{i+1}) \vert=y$ and consider the following decomposition of $F$:
\begin{center}
\begin{tikzpicture}[scale=4]
\draw (-.05,0) node{$F_{L,i+1}$};
\draw[->, shorten >=5pt, shorten <=5pt,out=60, in=120] (0,0) to (.5,0);
\draw[->, shorten >=5pt,shorten <=5pt,out=20, in=160] (0,0) to (.5,0);
\draw[->, shorten >=5pt, shorten <=5pt,out=-50, in=-130] (0,0) to (.5,0);
\draw (.25,0) node{$\vdots$};
\draw (.25,.2) node{{\small $x$ }};
\fill (.5,0)  circle[radius=.4pt];
\draw (.5,-.1) node{ {\small $v_{i+1}$ }};
\draw[->, shorten >=5pt, shorten <=5pt,out=60, in=120] (.5, 0) to (1,0);
\draw[->, shorten >=5pt,shorten <=5pt,out=20, in=160] (.5, 0) to (1,0);
\draw[->, shorten >=5pt, shorten <=5pt,out=-50, in=-130] (.5, 0) to (1,0);
\draw (.75,0) node{$\vdots$};
\draw (.75,.2) node{{\small $y$ }};
\draw (1.1,0) node{$F_{R,i+1}$};
\end{tikzpicture},
\end{center}
and compare it with a graph $G$ given by decomposition:
\begin{center}
\begin{tikzpicture}[scale=4]
\draw (-.05,0) node{$F_{L,i+1}$};
\draw[->, shorten >=5pt, shorten <=5pt,out=60, in=120] (0,0) to (.5,0);
\draw[->, shorten >=5pt,shorten <=5pt,out=20, in=160] (0,0) to (.5,0);
\draw[->, shorten >=5pt, shorten <=5pt,out=-50, in=-130] (0,0) to (.5,0);
\draw (.25,0) node{$\vdots$};
\draw (.25,.2) node{{\small $y$ }};
\fill (.5,0)  circle[radius=.4pt];
\draw (.5,-.1) node{ {\small $v_{i+1}$ }};
\draw[->, shorten >=5pt, shorten <=5pt,out=60, in=120] (.5, 0) to (1,0);
\draw[->, shorten >=5pt,shorten <=5pt,out=20, in=160] (.5, 0) to (1,0);
\draw[->, shorten >=5pt, shorten <=5pt,out=-50, in=-130] (.5, 0) to (1,0);
\draw (.75,0) node{$\vdots$};
\draw (.75,.2) node{{\small $x$ }};
\draw (1.1,0) node{$F_{R,i+1}$};
\end{tikzpicture}.
\end{center}
Since 
\begin{align*}
	& \vert FP(F) \vert=\vert FP(F_{L,i+1}) \vert+xL_{i+1}+1+xL_{i+1}yR_{i+1}+yR_{i+1}+\vert FP(F_{R,i+1}) \vert,\\
	& \vert FP(G) \vert=\vert FP(F_{L,i+1}) \vert+yL_{i+1}+1+yL_{i+1}xR_{i+1}+xR_{i+1}+\vert FP(F_{R,i+1}) \vert,
\end{align*}
we see that
\begin{equation*}
\vert FP(F) \vert-\vert FP(G) \vert=xL_{i+1}+yR_{i+1}-yL_{i+1}-xR_{i+1}=(x-y)(L_{i+1}-R_{i+1})\ge 0.
\end{equation*}
The last inequality follows from the condition $P^N\!(F)=\sup_{E\in\mathcal{T}^N_3}P^N\!(E)$.
Therefore if $i<k$ then $L_{i+1}<R_{i+1}$ and for above inequality to hold there must be $\vert s^{-1}(v_i) \vert=x\le y=\vert s^{-1}(v_{i+1}) \vert$ and if $i\ge k$ then $L_{i+1}\ge R_{i+1}$ and for the above inequality to hold there must be $\vert s^{-1}(v_{i+1}) \vert=y\le x=\vert s^{-1}(v_i) \vert$.
\end{proof}
\begin{corollary}\label{edgeseq}
	All lemmas that we have proven thus far yield a very particular form of the sequence of edges $l:=(|s_F(v_i)^{-1}|)_{i=1}^{n-1}$ in an optimal graph. Namely, we know that all entries are either $1$, $2$ or $3$ by Lemma~\ref{lem1}. The difference between two consecutive entries is at most $1$ by Lemma~\ref{lem2}. Further, Lemma~\ref{lem4} implies that the sequence starts by being non-decreasing, reaches a peak and then becomes non-increasing. Finally, we know that it starts and ends with a $1$ by Lemma~\ref{lem3}. So, in fact, $l$ is a sequence of the form
	\begin{equation*}
		l = (1,\dots,1,2,\dots,2,3,\dots,3,2,\dots,2,1,\dots,1).
	\end{equation*}
	Say that there are $n_1$ copies of $1$'s at the beginning, followed by $n_2$ $2$'s, $n_3$ $3$'s, $n_4$ $2$'s and finally $n_5$ $1$'s at the end. We will use the following shorthand for the sequence $l$:
	\begin{equation*}
		l =: (n_1,n_2,n_3,n_4,n_5).
	\end{equation*}
\end{corollary}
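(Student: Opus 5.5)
The plan is to take an optimal trunk $F$ and apply the earlier lemmas in turn, each cutting down the possible shape of the edge sequence $l=(|s_F^{-1}(v_i)|)_{i=1}^{n-1}$. First I would make sure the suprema are actually attained: $\mathcal{T}^N$ is finite by Proposition~\ref{22n}, so the maximum exists. Lemma~\ref{lem0} and Lemma~\ref{lem1} then let me choose a maximizer $F$ of $P^N$ over all of $\mathcal{E}^N$ that lies in $\mathcal{T}^N_3$. It is therefore also a maximizer over $\mathcal{T}^N_3$, and every entry of $l$ lies in $\{1,2,3\}$. Lemma~\ref{lem2} applies to this $F$ and shows that $F\in\mathcal{T}^N_{3,1}$. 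Since $\mathcal{T}^N_{3,1}\subset\mathcal{T}^N_3$, the same $F$ maximizes $P^N$ over $\mathcal{T}^N_{3,1}$, so Lemma~\ref{lem3} gives $l_1=l_{n-1}=1$.

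Next I would apply Lemma~\ref{lem4} to $F$ as a maximizer over $\mathcal{T}^N_3$. It says that $l$ is non-decreasing up to the index $k$ and non-increasing from $k$ on, so $l$ is unimodal. The content of the corollary then comes from combining three facts: unimodality, adjacent entries differing by at most $1$, and values in $\{1,2,3\}$. A unimodal sequence with these properties that starts and ends at $1$ must pass through every value up to its peak on the way up, and again on the way down. So $l$ reads as a block of $1$'s, a block of $2$'s, a block of $3$'s, a block of $2$'s, and a block of $1$'s, which is exactly the shorthand $(n_1,\dots,n_5)$.

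The one point that needs care is how to interpret the shorthand when the peak is below $3$. If the peak is $2$, we take $n_3=0$, and then the two blocks of $2$'s merge into one, so the split between $n_2$ and $n_4$ becomes a matter of convention. If the peak is $1$, we take $n_2=n_3=n_4=0$. Relatedly, $n_1,n_5\ge1$ because of Lemma~\ref{lem3}, while $n_2,n_3,n_4\ge0$ are allowed.

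I expect the main obstacle to be keeping the hypotheses of the lemmas aligned. Each lemma is stated for a maximizer over a different class ($\mathcal{T}^N_3$ for Lemma~\ref{lem2} and Lemma~\ref{lem4}, $\mathcal{T}^N_{3,1}$ for Lemma~\ref{lem3}). To handle this, I would fix $F$ once as a global maximizer lying in the smallest class, and check at each step that $F$ still satisfies that lemma's hypothesis. This works because the classes are nested, and a global maximizer of the larger class that lies in a smaller class is also a maximizer of the smaller one. With this done, the corollary only requires the claim that some optimal graph has this form, not that every optimal graph does.
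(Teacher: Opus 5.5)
Your proposal is correct and follows the paper's own argument, which is exactly this chaining of Lemmas~\ref{lem1}, \ref{lem2}, \ref{lem4} and \ref{lem3}, followed by the observation that a unimodal $\{1,2,3\}$-valued sequence with steps of size at most $1$ that starts and ends at $1$ has the stated block form. Your extra bookkeeping makes explicit what the paper leaves implicit: the maximum is attained because $\mathcal{T}^N$ is finite, the nested classes and equal suprema let one maximizer satisfy every lemma's hypothesis, and zero-length blocks are allowed when the peak is below $3$. This bookkeeping actually shows the form holds for every maximizer over $\mathcal{T}^N_3$, not just some, which is what the paper's later lemmas use.
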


\begin{lemma}\label{lem3.1}
Let $F\in \mathcal{T}^N_3$. If $\sup_{E\in\mathcal{T}^N_3}P^N\!(E)=P^N\!(F)$ and $(n_1,n_2,n_3,n_4,n_5)$ be its associated sequence with $n_2>0$ then $n_1,n_5\le 3$.
\end{lemma}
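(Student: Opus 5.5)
The plan is a single local exchange move in the spirit of Lemma~\ref{lem2} and Lemma~\ref{lem4}. It changes only an initial block of the trunk and strictly increases $P^N$. That contradicts the maximality of $F$ in $\mathcal{T}^N_3$. By the mirror symmetry (reversing all edges), the statement $n_5\le 3$ follows from $n_1\le 3$, so I only treat the beginning of the sequence.

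Suppose $n_1\ge 4$. Then $F$ begins with four single edges $v_1\to v_2\to v_3\to v_4\to v_5$, and the rest $F_{R,4}$ is the trunk starting at $v_5$. Let $R$ denote the number of paths of positive length in $F$ starting at $v_5$. I build $G$ by replacing the prefix $(1,1,1,1)$ by $(2,2)$: three vertices $w_1,w_2,w_3=v_5$, with two edges $w_1\to w_2$ and two edges $w_2\to w_3$, and the rest unchanged. Then $G$ has the same number $N$ of edges, is still a trunk, and has width at most $3$, so $G\in\mathcal{T}^N_3$. That $G$ now starts with a $2$ is irrelevant: Lemma~\ref{lem3} constrains only maximizers, and we are comparing $F$ against all of $\mathcal{T}^N_3$.

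Next I count paths by splitting them, as in the proof of Lemma~\ref{lem4}, into three classes:
\begin{itemize}
\item paths lying entirely in the prefix, including its vertices;
\item paths lying entirely in the remainder, including the vertex $v_5$ shared with the prefix;
\item paths that cross the junction $v_5$, that is, a positive-length path in the prefix ending at $v_5$ concatenated with a positive-length path starting at $v_5$.
\end{itemize}
For the prefix $(1,1,1,1)$ there are $5+10=15$ internal paths, of which $4$ have positive length and end at $v_5$. For $(2,2)$ there are $3+2+2+4=11$ internal paths, of which $2+4=6$ have positive length and end at $w_3$. The paths in the remainder are identical in $F$ and $G$. Hence
\[
P^N(G)-P^N(F)=(11-15)+(6-4)R=2R-4 .
\]
Note the overlap: the vertex $v_5$ itself, as a trivial path, is counted once, on the remainder side.

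It remains to show $R>2$, and this is where $n_2>0$ enters. Since $n_2>0$, the block right after the leading ones has two edges out of $v_5$, giving two paths of length $1$. By Lemma~\ref{lem3}, or simply because the sequence descends back to $1$ at the end (Corollary~\ref{edgeseq}), there is at least one further edge after that block. So $R\ge 2+2\cdot 1=4>2$, and $P^N(G)>P^N(F)$, contradicting maximality.

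The step I expect to need care is the bookkeeping at the junction: vertex paths must not be double-counted, and $R$ must be correctly identified as the positive-length paths out of $v_5$. Getting the strict inequality requires exactly the existence of the $2$-block and at least one more edge after it. If $n_2=0$, the sequence is all ones, and the same computation gives $R\le 2$ in small cases, so the hypothesis $n_2>0$ is what makes the argument work.
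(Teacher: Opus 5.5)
Your argument is correct in substance, but it uses a different exchange move from the paper's.

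The paper keeps the first $n_1-3$ single edges as a left block, with $L=n_1-2$ paths ending at its sink. It then replaces the last three single edges before the $2$-block, $(1,1,1)$, by $(1,2)$. The resulting sequence $(n_1-2,n_2+1,n_3,n_4,n_5)$ stays inside the shape of Corollary~\ref{edgeseq}. The gain is $LR-L-2$, where $R$ counts all paths out of $v_{n_1+1}$.

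You instead replace the first four single edges by $(2,2)$. There is then no left block, and the comparison collapses to the one-parameter quantity $2R-4$. The price is that $G$ leaves the unimodal class. As you note, this is harmless, because maximality is over all of $\mathcal{T}^N_3$. You also secure a strict inequality, which the statement (about every maximizer) actually needs; the paper only records $\geq 0$.

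There is one slip. The claim ``the block right after the leading ones has two edges out of $v_5$'' holds only for $n_1=4$. For $n_1\geq 5$, $v_5$ emits a single edge and the $2$-block starts at $v_{n_1+1}$. The bound survives, and more cheaply. The positive-length paths from $v_5$ include the $n_1-4$ paths along the remaining single edges. They also include at least two paths ending one step into the $2$-block. So $R\geq n_1-2\geq 3$ without Lemma~\ref{lem3}, and only the case $n_1=4$ needs your extra-edge argument.

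A similar care is needed for the mirror step. Literally, reversing edges turns the hypothesis $n_2>0$ into $n_4>0$. So state what you really use in symmetric form: the sequence is not all ones. Hence the trailing ones are preceded by a block of width at least $2$, and the same count, with Lemma~\ref{lem3} for $n_5=4$, gives $R>2$.
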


\begin{proof}
Let us assume that $n_1\ge 4$ and decompose the graph:
\begin{center}
\begin{tikzpicture}[scale=4]
\draw (-.15,0) node{$F_{L,n_1-2}$};
\draw[->, shorten >=5pt,shorten <=5pt,out=0, in=180] (0,0) to (.5,0);
\draw (.25,.2) node{{\small $1$ }};
\fill (.5,0)  circle[radius=.4pt];
\draw (.5,-.1) node{ {\small $v_{n_1-1}$ }};
\draw[->, shorten >=5pt, shorten <=5pt,out=0, in=180] (.5, 0) to (1,0);
\draw (.75,.2) node{{\small $1$ }};
\fill (1,0)  circle[radius=.4pt];
\draw (1,-.1) node{ {\small $v_{n_1}$ }};
\draw[->, shorten >=5pt, shorten <=5pt,out=0, in=180] (1, 0) to (1.5,0);
\draw (1.25,.2) node{{\small $1$ }};
\draw (1.65,0) node{$F_{R,n_1+1}$};
\end{tikzpicture}.
\end{center}
And let's compare it with graph $G$ whose associated sequence is $(n_1-2,n_2+1,n_3,n_4,n_5)$ and decompose it as
\begin{center}
\begin{tikzpicture}[scale=4]
\draw (-.15,0) node{$G_{L,n_1-2}$};
\draw[->, shorten >=5pt,shorten <=5pt,out=0, in=180] (0,0) to (.5,0);
\draw (.25,.2) node{{\small $1$ }};
\fill (.5,0)  circle[radius=.4pt];
\draw (.5,-.1) node{ {\small $v_{n_1-1}$ }};
\draw[->, shorten >=5pt, shorten <=5pt,out=60, in=120] (0.5, 0) to (1,0);
\draw[->, shorten >=5pt, shorten <=5pt,out=0, in=180] (.5, 0) to (1,0);
\draw (.75,.2) node{{\small $2$ }};
\draw (1.1,0) node{$G_{R,n_1}$};
\end{tikzpicture}.
\end{center}
Since both $F_{L,n_1-2}$ and $G_{L,n_1-2}$ have associated sequence $(n_1-3,0,0,0,0)$ Let us denote the number of paths ending at sinks of these graps by $L$ and similarly since $F_{R,n_1+1}$ and $G_{R,n_1}$ both have associated sequence $(0,n_2,n_3,n_4,n_5)$ let us denote by $R$ the number of all paths starting at the source of these graphs.
Simple computation gives us
\begin{equation*}
\begin{split}
\vert FP(F)\vert&=\vert FP(F_{L,n_1-2})\vert +2L+2R+LR+3+\vert FP(F_{R,n_1+1})\vert \\
\vert FP(G)\vert&= \vert FP(G_{L,n_1-2}) \vert +L+1+2R+2LR+\vert FP(G_{R,n_1})\vert \\
\vert FP(G)\vert-\vert FP(F)\vert&=LR-L-2 \ge 0.
\end{split}
\end{equation*}
The last inequality holds, because $L=n_1-2\ge 2$ and $R\ge 2$ since $n_2>0$.
This shows the optimal graph can ber found among these with $n_1\le 3$.
Dual proof shows the same for $n_5$.
\end{proof}

\begin{lemma}\label{lem5}
Let $F\in \mathcal{T}^N_3$. If $\sup_{E\in\mathcal{T}^N_3}P^N\!(E)=P^N\!(F)$ and $l = (n_1,n_2,n_3,n_4,n_5)$ be its associated sequence and, $n_3>0$ then $\vert n_2-n_4\vert\le 1$.
\end{lemma}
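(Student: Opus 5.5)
We argue by contradiction via an exchange move. Let $F$ be optimal with associated sequence $(n_1,n_2,n_3,n_4,n_5)$, $n_3>0$, and suppose without loss of generality (the dual, mirror-image case being symmetric) that $n_2\ge n_4+2$. Let $G$ be the trunk graph with sequence $(n_1,n_2-1,n_3,n_4+1,n_5)$. In the edge sequence, $G$ arises from $F$ by replacing the block $(2,3,\dots,3)$ (the last $2$ of the left shoulder followed by the $n_3$ threes) with the block $(3,\dots,3,2)$. We then show $|FP(G)|>|FP(F)|$, contradicting optimality.

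\textbf{Counting.} Let $a$ and $b$ be the vertices at the two ends of the modified segment. Let $L$ be the number of paths in the left part ending at $a$, counting the trivial path. Let $R$ be the number of paths in the right part starting at $b$, again counting the trivial path. As in the proofs of Lemmas~\ref{lem4} and~\ref{lem3.1}, split $|FP(\cdot)|$ into four kinds of paths:
\begin{enumerate}
\item paths disjoint from the interior of the segment;
\item paths lying inside the segment;
\item paths entering from the left and stopping inside;
\item paths starting inside and leaving to the right.
\end{enumerate}
Paths crossing the whole segment contribute $LR\cdot 2\cdot 3^{n_3}$ in both graphs. The internal paths of a word and of its reversal are equinumerous, so the counts for $w=(2,3^{n_3})$ and its reverse agree. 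Writing $\pi(w)$ for the sum of the proper prefix products of $w$ and $\sigma(w)$ for the sum of its proper suffix products, we have $\pi(w^{\mathrm{rev}})=\sigma(w)$. This gives
\[
|FP(G)|-|FP(F)|=L\bigl(\pi(w^{\mathrm{rev}})-\pi(w)\bigr)+R\bigl(\sigma(w^{\mathrm{rev}})-\sigma(w)\bigr)=D\,(L-R),
\]
with $D=\sigma(w)-\pi(w)$. A direct computation shows $D>0$, since the prefixes of $(3,\dots,3,2)$ are built from the larger factor first. Equivalently, one can obtain $G$ by $n_3$ successive adjacent swaps as in Lemma~\ref{lem4}, tracking the sign of each $(x-y)(L_{i+1}-R_{i+1})$ term. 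The closed form above avoids the bookkeeping of that route.

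\textbf{Comparing $L$ and $R$.} It remains to show $L>R$. Here $L$ is the path count to the end of the trunk $1^{n_1}2^{n_2-1}$, and $R$ is the path count from the start of $2^{n_4}1^{n_5}$. Using the recursion $L_{m+1}=1+|s^{-1}(v_{m-1})|L_m$ we get $L=2^{n_2-1}(n_1+2)-1$ when $n_1\ge 1$, and similarly $R=2^{n_4}(n_5+2)-1$. Lemma~\ref{lem3} gives $n_1,n_5\ge 1$, and Lemma~\ref{lem3.1} gives $n_1,n_5\le 3$; the latter applies since $n_2>0$, and also $n_4>0$ unless $n_4=0$, a case handled separately where $R=n_5+1$. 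Hence $R\le 5\cdot 2^{n_4}-1$ while $L\ge 3\cdot 2^{n_2-1}-1\ge 6\cdot 2^{n_4}-1$, so $L>R$.

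\textbf{Main obstacle.} The delicate step is this final comparison. The constants $n_1,n_5$ can differ, and the margin is thin: $3\cdot 2$ versus $5$. I therefore expect to check the boundary cases carefully: $n_1=1$, $n_5=3$, and $n_4=0$. If some boundary case only yields $L\ge R$, I would fall back on the weaker conclusion that an optimal graph can be chosen with $|n_2-n_4|\le 1$, in the same spirit as Lemma~\ref{lem3.1}.
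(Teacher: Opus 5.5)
Your proposal is correct and follows essentially the paper's route: the same exchange $F\mapsto(n_1,n_2-1,n_3,n_4+1,n_5)$, the same factorization $|FP(G)|-|FP(F)|=(L-R)\sum_{k=0}^{n_3-1}3^k$ (your $D$ equals $\frac{3^{n_3}-1}{2}$), and the same use of Lemmas~\ref{lem3} and~\ref{lem3.1} to get $L\ge 6\cdot 2^{n_4}-1>5\cdot 2^{n_4}-1\ge R$; this inequality is strict, so no fallback is needed. The one loose end is the case $n_4=0$, which you say is ``handled separately'' but never bound $n_5$ for; it cannot occur, because with $n_3>0$ and $n_5\ge 1$ a $3$ would sit directly next to a $1$, which Lemma~\ref{lem2} forbids.
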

\begin{proof}
Let us assume that $n_2-n_4\ge 2$ and $n_1\ge n_5$ and decompose $F$ as 
\begin{center}
\begin{tikzpicture}[scale=4]
\draw (-.1,0) node{$F_{L,n_1+n_2}$};
\draw[->, shorten >=5pt, shorten <=5pt,out=60, in=120] (0,0) to (.5,0);
\draw[->, shorten >=5pt,shorten <=5pt,out=0, in=180] (0,0) to (.5,0);
\draw (.25,.2) node{{\small $2$ }};
\fill (.5,0)  circle[radius=.4pt];
\draw (.5,-.1) node{ {\small $v_{n_1+n_2+1}$ }};
\draw[->, shorten >=5pt, shorten <=5pt,out=60, in=120] (.5, 0) to (1,0);
\draw[->, shorten >=5pt,shorten <=5pt,out=30, in=150] (.5, 0) to (1,0);
\draw[->, shorten >=5pt, shorten <=5pt,out=0, in=180] (.5, 0) to (1,0);
\draw (.75,.2) node{{\small $3$ }};
\fill (1,0)  circle[radius=.4pt];
\draw (1,-.1) node{ {\small $v_{n_1+n_2+2}$ }};
\draw (1.25,0) node{$\cdots$};
\fill (1.5,0)  circle[radius=.4pt];
\draw (1.5,-.1) node{ {\small $v_{n_1+n_2+n_3}$ }};
\draw[->, shorten >=5pt, shorten <=5pt,out=60, in=120] (1.5, 0) to (2,0);
\draw[->, shorten >=5pt,shorten <=5pt,out=30, in=150] (1.5, 0) to (2,0);
\draw[->, shorten >=5pt, shorten <=5pt,out=0, in=180] (1.5, 0) to (2,0);
\draw (1.75,.2) node{{\small $3$ }};
\draw (2.25,0) node{$F_{R,n_1+n_2+n_3}$};
\end{tikzpicture}.
\end{center}
and compare it with graph $G$ whose associated sequence is $(n_1,n_2-1,n_3,n_4+1,n_5)$, and decompose it as
\begin{center}
\begin{tikzpicture}[scale=4]
\draw (-.1,0) node{$G_{L,n_1+n_2}$};
\draw[->, shorten >=5pt, shorten <=5pt,out=60, in=120] (0,0) to (.5,0);
\draw[->, shorten >=5pt,shorten <=5pt,out=30, in=150] (0,0) to (.5,0);
\draw[->, shorten >=5pt, shorten <=5pt,out=0, in=180]  (0,0) to (.5,0);
\draw (.25,.2) node{{\small $3$ }};
\fill (.5,0)  circle[radius=.4pt];
\draw (.5,-.1) node{ {\small $v_{n_1+n_2+1}$ }};
\draw (.75,0) node{$\cdots$};
\fill (1,0)  circle[radius=.4pt];
\draw (1,-.1) node{ {\small $v_{n_1+n_2+2}$ }};
\draw[->, shorten >=5pt, shorten <=5pt,out=60, in=120] (1, 0) to (1.5,0);
\draw[->, shorten >=5pt,shorten <=5pt,out=30, in=150] (1, 0) to (1.5,0);
\draw[->, shorten >=5pt, shorten <=5pt,out=0, in=180] (1, 0) to (1.5,0);
\draw (1.25,.2) node{{\small $3$ }};
\fill (1.5,0)  circle[radius=.4pt];
\draw (1.5,-.1) node{ {\small $v_{n_1+n_2+n_3}$ }};
\draw[->, shorten >=5pt, shorten <=5pt,out=60, in=120] (1.5, 0) to (2,0);
\draw[->, shorten >=5pt, shorten <=5pt,out=0, in=180] (1.5, 0) to (2,0);
\draw (1.75,.2) node{{\small $2$ }};
\draw (2.25,0) node{$G_{R,n_1+n_2+n_3}$};
\end{tikzpicture}.
\end{center}
Since $F_{L,n_1+n_2}$ and $G_{L,n_1+n_2}$ are isomorphic and $F_{R,n_1+n_2+n_3}$ and $G_{R,n_1+n_2+n_3}$ are isomorphic, they have the same number of paths, and we can easily compare:
\begin{equation*}
\begin{split}
\vert FP(F)\vert
&=\vert FP(F_{L,n_1+n_2}) \vert +2L_{n_1+n_2} \sum\limits_{k=0}^{n_3-1}3^k +\sum\limits_{k=1}^{n_3} 3^k R_{n_1+n_2+n_3} \\
&+2\sum\limits_{k=0}^{n_3} 3^k L_{n_1+n_2}R_{n_1+n_2+n_3} +\vert FP(F_{L,n_1+n_2+n_3}) \vert+\vert FP(F_{n_1+n_2+1,n_1+n_2+n_3}) \vert \\
\vert FP(G)\vert
&=\vert FP(G_{L,n_1+n_2}) \vert +L_{n_1+n_2} \sum\limits_{k=1}^{n_3}3^k +2\sum\limits_{k=0}^{n_3-1} 3^k R_{n_1+n_2+n_3} \\
&+2\sum\limits_{k=0}^{n_3} 3^k L_{n_1+n_2}R_{n_1+n_2+n_3} +\vert FP(G_{L,n_1+n_2+n_3}) \vert+\vert FP(G_{n_1+n_2+1,n_1+n_2+n_3}) \vert 
\end{split}
\end{equation*}
Because $F_{L,n_1+n_2}$ has sequence $(n_1,n_2-1,0,0,0)$ and $F_{L,n_1+n_2+n_3}$ has sequence $(0,0,0,n_4,n_5)$ then
\begin{equation*}
\begin{split}
L_{n_1+n_2}
&=2^{n_2-1}(2+n_1)-1
\ge 2^{n_4+3}(2+n_1)-1
2^{n_4} \cdot 8(2+n_1)-1 \\
&\ge 2^{n_4} \frac{2+n_5}{2+n_1}(2+n_1)-1
=R_{n_1+n_2+n_3}
\end{split}
\end{equation*}
where the above inequality follows from the lemma \ref{lem3.1} because $\frac{2+n_5}{2+n_1}\le \frac{5}{3}\le 8$
\begin{equation*}
\begin{split}
\vert FP(G)\vert-\vert FP(F)\vert
&=L_{n_1+n_2}\sum\limits_{k=0}^{n_3-1}3^k-R_{n_1+n_2+n_3}\sum\limits_{k=0}^{n_3-1} 3^k
=\left( L_{n_1+n_2}-R_{n_1+n_2+n_3} \right)\sum\limits_{k=0}^{n_3-1} 3^k > 0.
\end{split}
\end{equation*}
This contradicts the optimality of $F$.
The proof for the case $n_4-n_2\le 1$ is dual.
\end{proof}
\begin{lemma}\label{extremedies}
Let $F\in \mathcal{T}_{3,1}^N$ be such that $\sup_{E\in\mathcal{T}^N_{3,1}}P^N\!(E)=P^N\!(F)$. Then we have
\begin{equation*}
	|s_F^{-1}(v_i)|=1 \iff \; i\in \{1,n-1\},
\end{equation*}
where $F^0=\{v_1,\dots,v_n\}$ and $N$ is sufficiently large (at least $18$).
\end{lemma}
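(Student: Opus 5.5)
Throughout, view an optimal $F$ via its edge sequence $l=(l_1,\dots,l_{n-1})$ with $l_i=|s_F^{-1}(v_i)|$. The direction ``$\Leftarrow$'' is Lemma~\ref{lem3}, since $F$ is optimal in $\mathcal{T}^N_{3,1}$. For ``$\Rightarrow$'' I would use the shape from Corollary~\ref{edgeseq}, $l=(n_1,n_2,n_3,n_4,n_5)$: the sequence is unimodal with entries in $\{1,2,3\}$ and consecutive differences at most $1$. The claim is then equivalent to $n_1=n_5=1$. By the left--right symmetry (reversing all edges), it suffices to show $n_1=1$, i.e.\ $n_1\ge 2$ is impossible.

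\textbf{Step 1: bound $n_1$ and force $n_2>0$.} First rule out $n_2=0$. Then by \eqref{3,1} there are no $3$'s either, so $F$ is the linear graph $(1,\dots,1)$ with $N+1+\binom{N+1}{2}$ paths. This is beaten for $N\ge 18$ by the explicit trunk $(1,2,3,\dots,3,2,1)$, whose path count grows exponentially in $N$. Once $n_2>0$, Lemma~\ref{lem3.1} gives $n_1\le 3$. So only $n_1\in\{2,3\}$ needs excluding.

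\textbf{Step 2: a local exchange.} Use the same decomposition style as Lemmas~\ref{lem3.1} and~\ref{lem4}. Write $L=L_{m}$ for the paths arriving at the vertex after the initial $1$'s, and $R$ for the paths leaving the end of the block considered. Then compare $F$ with a modified graph $G$ of the same weight that stays in $\mathcal{T}^N_{3,1}$, for instance:
\begin{itemize}
\item for $n_1=2$: replace the prefix $(1,1,2,\dots)$ by $(1,2,1\text{-shifted}\dots)$; more concretely, merge the two leading single edges into one double edge $(1,1)\to(2)$ and move one edge from the far end of the sequence (or from a $3$-block, as in the transfer of Lemma~\ref{lem2}) to restore the weight;
\item alternatively, move one leading $1$ to the other end, turning $(1,1,2,\dots,2,1)$ into $(1,2,\dots,2,1,1)$-type comparisons, and then apply the merge there.
\end{itemize}
The simplest concrete choice I would try first is to replace the leading $(1,1)$ by a single double edge $(2)$, shortening the graph by one vertex. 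This adds one edge, so compensate by deleting a single edge from the right end; by Lemma~\ref{lem3} that end is a $1$, so this drops a sink vertex. Then compute $|FP(G)|-|FP(F)|$ as an expression of the form $aLR-bL-cR-d$ in the small constants. Here $L$ is the count at the start, which is small (about $2$ or $3$), while the right-hand count $R$ grows like $2^{n_2}3^{n_3}$.

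\textbf{Step 3: the main obstacle.} Showing that this difference is positive requires $R$ (or the analogous middle quantity) to be large compared with the loss from removing the rightmost tail. That holds only when the middle part carries enough paths, which is where the hypothesis $N\ge 18$ enters. I expect the difficulty to be bookkeeping: making the removed end edge's contribution (roughly $L_{n-1}$, the paths into the sink) smaller than the gain (about $L_{\text{first}}\cdot R$). Since both quantities are exponential, a naive removal at the right end may lose as much as is gained.

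If that happens, I would instead take the compensating edge from the interior, converting a $3$ to a $2$ next to a $2$-block, with the difference controlled through Lemma~\ref{lem4} and Lemma~\ref{lem5} (the balance $|n_2-n_4|\le 1$). After that, a finite case check over $n_1\in\{2,3\}$ with $n_5\le 3$ would finish, with the small cases $N<18$ matching the table, where e.g.\ $(1,1,2,2,1)$ is genuinely optimal at $N=7$.
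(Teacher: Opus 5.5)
Your Step 1 is a legitimate reduction that the paper does not use. Once the all-ones trunk is excluded, Lemma~\ref{lem3.1} applies, since by Lemma~\ref{lem2} an optimum over $\mathcal{T}^N_{3,1}$ is also an optimum over $\mathcal{T}^N_3$. That leaves only $n_1\in\{2,3\}$.

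\textbf{The gap.} Steps 2--3 never produce a valid exchange that preserves the weight.
\begin{itemize}
\item Your concrete move is miscounted. Replacing the leading $(1,1)$ by a double edge $(2)$ does not add an edge, since both use two edges. It only deletes $v_2$, and by the computation in Lemma~\ref{lem3} this strictly \emph{decreases} the number of paths. Deleting a further edge at the right end then gives a graph of weight $N-1$, so no comparison inside $\mathcal{T}^N$ takes place.
\item The charitable reading (raise the second entry from $1$ to $2$ and delete the sink edge) fails, as you feared. For $N=7$ it sends the optimal $(1,1,2,2,1)$, with $44$ paths, to $(1,2,2,2)$, with $42$. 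Deleting the sink edge costs every path ending at the sink, which is of the same exponential size as the gain.
\item The fallback is only sketched. As written it is not well defined, because it compensates for an edge that was never added. It is also unavailable when $n_3=0$.
\item A ``finite case check over $n_1\in\{2,3\}$, $n_5\le 3$'' is not finite. The parameters $n_2,n_3,n_4$ remain unbounded, and an inequality uniform in them is exactly what is missing.
\end{itemize}

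\textbf{The missing idea.} The paper's observation is that a surplus leading $1$ is a badly used edge. So delete $v_1$ together with its edge, and reinsert that edge as an extra parallel edge in the interior, turning a $2$ adjacent to the $3$-block into a $3$. (In the sub-case $n_2=n_4=1$, $n_3>0$, the paper instead shifts one edge of a $3$ onto the last leading $1$.) Let $R$ be the number of paths starting at the target of the upgraded step. The loss, namely all paths starting at $v_1$, is about $2^{n_2}R$. The gain is exactly $(2^{n_2}-1+2^{n_2-1}(n_1-1))R$, which is larger once $n_1\ge2$ and $n_2\ge2$; the paper handles $n_2=1$ by separate cases.

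\textbf{How your fallback can be repaired.} It can be made rigorous in this spirit, in two cases.
\begin{itemize}
\item Case $n_3\ge1$: use the move $(n_1,n_2,n_3,n_4,n_5)\mapsto(n_1-1,n_2+2,n_3-1,n_4,n_5)$, which moves one edge of the \emph{first} $3$ onto the last leading $1$. It changes the number of paths by $n_1(2^{n_2+1}-1)+R(2^{n_2}(n_1-2)+1)>0$, where $R$ now counts the paths starting at the target of the first triple edge. This disposes of every case with $n_3\ge1$.
\item Case $n_3=0$: here $l=(1,\dots,1,2,\dots,2,1,\dots,1)$ with $m\ge6$ twos, once $N\ge18$ and $n_1,n_5\le3$. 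You still need the deletion of $v_1$, with an interior two raised to a $3$; say there are $a$ twos before it and $b$ after, so $a+b+1=m$. The net change $(2^a(n_1+1)-1)(2^b(n_5+2)-1)-(n_1-1)-2^{a+b+1}(n_5+2)$ is positive for $a\ge2$, $b\ge1$.
\end{itemize}
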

\begin{proof}
	We have already proven the implication $\impliedby$ -- this is exactly Lemma~\ref{lem3}. We shall therefore focus solely on the implication $\implies$. In light of Corollary~\ref{edgeseq} we only need to argue that we can have only one $1$ at the start of the sequence $l=(|s_F(v_i)^{-1}|)_{i=1}^{n-1}$ and a dual argument would yield the same for the ending of this sequence. Let us denote the sequence $l$ as in Corollary~\ref{edgeseq}:
	\begin{equation}
		l =: (n_1,n_2,n_3,n_4,n_5).
	\end{equation}
	Suppose that $n_1>1$. We need to consider four cases, based on the numbers $n_2,n_4$ and $n_3$ later on. If $n_2>1$, then consider the following modification: Let $G=(G_0,G_1,s_G,t_G)\in\mathcal{T}_{3,1}^N$ be the trunk with $G_0=\{v_2,\dots,v_n\}$ and the following sequence $l'=(|s_G^{-1}(v_i)|)_{i=2}^{n-1}$
	\begin{equation}
		l '=: (n_1-1,n_2-1,n_3+1,n_4,n_5).
	\end{equation}
	Graphically speaking, we construct $G$ by deleting the leftmost vertex of $F$ and putting the stray edge on top of the last $2$ before the sequence of $3$'s. In doing so, we lost exactly all paths that started with the very first vertex in $F$ and gained all paths in $G$ that factor through the newly added edge. Let us denote all paths in $F$ that start with the first vertex by $P_1$ and paths in $G$ that go through the newly added edge by $P_2$. Lost paths are fairly easy to compute:
	\begin{gather}
		P_1 = (\sum_{k=0}^{n_1}1) + (\sum_{k=1}^{n_2}2^k) + 2^{n_2}(\sum_{k=1}^{n_3}3^k) + 2^{n_2}3^{n_3}(\sum_{k=1}^{n_4}2^k) + 2^{n_2+n_4}3^{n_3}(\sum_{k=1}^{n_5}1) = \nonumber\\
		= n_1 - 1  + 2^{n_2}\left( 2 +  (\sum_{k=1}^{n_3}3^k) + 3^{n_3}(\sum_{k=1}^{n_4}2^k) + 2^{n_4}3^{n_3}n_5\right)
	\end{gather}
	As for the gained paths, the newly added edge turned the last $2$ in the sequence $l$ into a $3$. The the last $2$ in the sequence $l$ is in the ($n_1+n_2$)-th spot, i.e. the last two vertices that are connected by a pair of edges are vertices $n_1+n_2$ and $1+n_1+n_2$. We are interested in counting paths that go through these edges but go through only one particular edge between them. In other words, we look at the following graph:
	\begin{equation*}
		G_{L,n_1+n_2}\xrightarrow{2} v_{n_1+n_2} \to v_{1+n_1+n_2} \xrightarrow{3} G_{R,1+n_1+n_2}
	\end{equation*}
	The number of paths that go through the edge in the middle is given by
	\begin{equation}
		P_2=L_{1+n_1+n_2}R_{2+n_1+n_2}.
	\end{equation}
	Recall that $L_{1+n_1+n_2}$ is the number of paths that end at the unique sink of the following graph
	\begin{equation*}
		v_2\to \dots\to v_{n_1+1}\xrightarrow{2}v_{n_1+2}\xrightarrow{2}\dots\xrightarrow{2} v_{n_1+n_2}.
	\end{equation*}
	Thus $L_{1+n_1+n_2}$ is given by
	\begin{equation}
		L_{1+n_1+n_2} = (\sum_{k=0}^{n_2-1}2^k) + 2^{n_2-1}(n_1-1) = 2^{n_2}-1 + 2^{n_2-1}(n_1-1).
	\end{equation}
	As for $R_{1+n_1+n_2}$ we need to compute the number of paths that start at the unique source of the following graph
	\begin{equation*}
		v_{1+n_1+n_2}\xrightarrow{3}\dots\xrightarrow{3}v_{1+n_1+n_2+n_3}\xrightarrow{2}\dots\xrightarrow{2}v_{1+n_1+n_2+n_3+n_4}\to\dots\to v_n.
	\end{equation*}
	So it is given by
	\begin{equation}
		R_{1+n_1+n_2} = 1+ \sum_{k=1}^{n_3}3^k + 3^{n_3}(\sum_{k=1}^{n_4}2^k) + 3^{n_3}2^{n_4}n_5.
	\end{equation}
	So the difference $P_2-P_1$ can be simplified into
	\begin{equation}
		P_2-P_1 = (2^{n_2-1}(n_1-1)-1)R_{1+n_1+n_2} - n_1 + 1 - 2^{n_2}.
	\end{equation}
	Recall that we have no assumption on $n_3\ge0$, but we assume that $n_1>1,n_5>0, n_2>1$ and additionally $n_4\ge n_2-1$ by Lemma~\ref{lem5}. Hence we have the following bound on $R_{1+n_1+n_2}$:
	\begin{equation}
		R_{1+n_1+n_2} > 1 + 2(2^{n_4-1}-1)+2^{n_4}\ge 3\cdot2^{n_2-1}-1
	\end{equation}
	This lets us get the following bound for $P_2-P_1$:
	\begin{gather}
		P_2-P_1 > (3\cdot 2^{n_2-1}-1)(2^{n_2-1}(n_1-1)-1)-n_1+1-2^{n_2}\ge\nonumber\\
		\ge 3\cdot 2^{2n_2-2}-6\cdot 2^{n_2-1} = 2^{n_2-1}(3\cdot 2^{n_2-1}-6)\ge0
	\end{gather}
	Therefore our modified graph $G$  has a larger total amount of paths in comparison to the optimal graph $F$ which is obviously not possible.
	
	Next, let us consider the case $n_2=1$ and $n_4>1$. Here we do an analogous construction, just on the other set of twos: Consider the graph $E=(E^0,E^1,s_E,t_E)$ with $E^0=\{v_2,\dots,v_{n}\}$ and the following sequence of edges $l''=(|s_E^{-1}(v_i)|)_{i=2}^{n-1}$:
	\begin{equation}
		l'' = (n_1-1,1,n_3+1,n_4-1,n_5).
	\end{equation}
	The calculus of lost and gained paths is largely the same here. In fact we lose the exact same amount of paths, but we specialize to $n_2=1$, so we get
	\begin{equation}
		P_1 = n_1-1 + 2\left( 2+(\sum_{k=1}^{n_3}3^k) + 3^{n_3}(\sum_{k=1}^{n_4}2^k) + 2^{n_4}3^{n_3}n_5 \right).
	\end{equation}
	In this case we gain paths that go through a distinguished edge between the $(1+n_1+n_2+n_3)$-th vertex and its successor. So what we are after is $P_2'=L_{1+n_1+n_2+n_3}R_{2+n_1+n_2+n_3}$, where $L_{1+n_1+n_2+n_3}$ is the number of paths that end at the unique sink in the following graph
	\begin{equation*}
		v_2\to\dots\to v_{1+n_1}\xrightarrow{2}v_{1+n_1+n_2} \xrightarrow{3}\dots\xrightarrow{3}v_{1+n_1+n_2+n_3}
	\end{equation*}
	and $R_{2+n_1+n_2+n_3}$ is the number of paths that start at the unique source of the following graph
	\begin{equation*}
		v_{2+n_1+n_2+n_3}\xrightarrow{2}\dots\xrightarrow{2}v_{1+n_1+n_2+n_3+n_4}\to\dots\to v_n.
	\end{equation*}
	These numbers turn out to be
	\begin{align*}
		& L_{1+n_1+n_2+n_3} = (\sum_{k=0}^{n_3}3^k) + 2\cdot 3^{n_3}n_1 =  (\sum_{k=0}^{n_3}3^k) + 2\cdot 3^{n_3}(n_1-1)+2\cdot3^{n_3},\\
		& R_{2+n_1+n_2+n_3} = (\sum_{k=0}^{n_4-1}2^k) + 2^{n_4-1} n_5 .
	\end{align*}
	So the difference $P_2'-P_1$ simplifies to
	\begin{gather*}
		P_2'-P_1 = \left((\sum_{k=0}^{n_3}3^k)+2\cdot3^{n_3}(n_1-2)\right)\left((\sum_{k=0}^{n_4-1}2^k)+2^{n_4-1}n_5\right) + 2\left(3^{n_3}(\sum_{k=2}^{n_4}2^{k})+2^{n_4}3^{n_3}n_5\right) - P_1 =\\
		= \left((\sum_{k=0}^{n_3}3^k)+2\cdot3^{n_3}(n_1-2)\right)\left((\sum_{k=0}^{n_4-1}2^k)+2^{n_4-1}n_5\right) - n_1 -3 - 2(\sum_{k=1}^{n_3}3^k) = \\
		= \left((\sum_{k=0}^{n_3}3^k)+2\cdot3^{n_3}(n_1-2)\right)\left((\sum_{k=0}^{n_4-1}2^k)+2^{n_4-1}n_5-3\right) + (\sum_{k=0}^{n_3}3^k)+6\cdot3^{n_3}(n_1-2) - n_1-3\ge \\
		\ge (1+2(n_1-2))2 + 1 + 6(n_1-2) - n_1-3 = 9n_1 -16>0
	\end{gather*}
	Again, our modified graph $E$ turns out to be better. 
	
	Now consider the case $n_2=n_4=1$ and $n_3>0$. Here we can consider the following graph: Let $H=(H^0,H^1,s_H,t_H)$ be a graph  with $H^0=F^0$ the following sequence of edges $l'''=(|s_F^{-1}(v_i)|)_{i=1}^{n-1}$:
	\begin{equation*}
		l'''=(n_1-1,2,n_3-1,2,n_5).
	\end{equation*}
	This modification is very different from the previous ones -- this time we end up with the same number of vertices instead of lessening the total amount of vertices that we got previously. To compare the number of paths between the graphs $F$ and $H$ note that $H$ differs from $F$ by redistributing one of the $3$'s that lie next to a $2$ and redistributing one of its edges into a $1$ on the other side. This means that through our construction we lose paths in $F$ that go through that particular edge that we redistribute, we will denote the number of such paths by $Q_1$. On the flip side, we gain paths in $H$ that go through this redistributed path, let us denote the number of such paths by $Q_2$. Explicitly, $Q_1$ is given by the number of paths that pass through the middle edge in the following graph
	\begin{equation*}
		F_{L,1+n_1+n_2+n_3}\xrightarrow{3}v_{n_1+n_2+n_3}\to v_{1+n_1+n_2+n_3}\xrightarrow{2}F_{R,2+n_1+n_2+n_3}
	\end{equation*}
	So $Q_1=L_{1+n_1+n_2+n_3}R_{2+n_1+n_2+n_3}$. To compute $L_{1+n_1+n_2+n_3}$ we need to compute the number of paths that end at the unique sink of the following graph
	\begin{equation*}
		v_1\to\dots\to v_{1+n_1}\xrightarrow{2}v_{1+n_1+n_2}\xrightarrow{3}\dots\xrightarrow{3}v_{n_1+n_2+n_3}.
	\end{equation*}
	This yields the following number
	\begin{equation}
		L_{1+n_1+n_2+n_3} = (\sum_{k=0}^{n_3-1}3^k) + 2\cdot 3^{n_3-1}(n_1+1) = \frac{7}{2}3^{n_3-1}+2\cdot 3^{n_3-1}n_1 - \frac{1}{2}.
	\end{equation}
	To find $R_{2+n_1+n_2+n_3}$ we look at paths that start at the unique source of the following graph
	\begin{equation*}
		v_{1+n_1+n_2+n_3}\xrightarrow{2}v_{1+n_1+n_2+n_3+n_4}\to\dots\to v_n.
	\end{equation*}
	We obtain
	\begin{equation}
		R_{2+n_1+n_2+n_3} = 1 + 2(n_5+1) = 2n_5+3.
	\end{equation}
	The number $Q_2$ of paths gained is given by the product $Q_2=L_{1+n_1}R_{2+n_1}$. The number on the left is simply $L_{1+n_1}=n_1$. The other number is given by paths that start at the unique source of the following graph
	\begin{equation*}
		v_{1+n_1}\xrightarrow{2}v_{1+n_1+n_2}\xrightarrow{3}\dots\xrightarrow{3}v_{n_1+n_2+n_3}\xrightarrow{2}v_{1+n_1+n_2+n_3}\xrightarrow{2}v_{1+n_1+n_2+n_3+n_4}\to\dots\to v_n.
	\end{equation*}
	So we get
	\begin{gather}
		R_{2+n_1} = 1 + 2((\sum_{k=0}^{n_3-1}3^k) + 2\cdot3^{n_3-1}(1+2(n_5+1))) = \nonumber\\
		= 3^{n_3} + 4\cdot 3^{n_3-1} (3+2n_5) = 5\cdot3^{n_3} + 8\cdot 3^{n_3-1}n_5
	\end{gather}
	As the problem is symmetric in variables $n_1$ and $n_5$ we can assume $n_1\ge n_5\ge 1$. That way we get obtain the following estimate on $Q_2-Q_1$:
	\begin{gather}
		Q_2-Q_1 = n_1(5\cdot 3^{n_3}+8\cdot 3^{n_3-1}) - (2n_5+3)\left(\frac{7}{2}3^{n_3-1}+2\cdot 3^{n_3-1}n_1 - \frac{1}{2}\right) = \nonumber\\
		= 4\cdot 3^{n_3-1}n_1n_5 + 3^{n_3+1}n_1+(1-7\cdot 3^{n_3-1})n_5 + \frac{3}{2} - \frac{7}{2}3^{n_3} \ge \nonumber\\
		\ge 12\cdot3^{n_3-1} + \frac{7}{2} - \frac{7}{2}\cdot 3^{n_3}>0.
	\end{gather}
	Thus our modification would have increased the total amount of paths, which is a contradiction. Of course, this construction does not make sense if $n_3>0$. 
	
	Our final case would be if $n_2=n_4=1$ and $n_3=0$. Here we will consider the following modification -- let $K=(K^0,K^1,s_K,t_K)$ be the graph with $K^0=\{v_2,\dots,v_n\}$ and the following sequence $l_K=(|s_K^{-1}(v_i)|)_{i=2}^{n-1}$:
	\begin{equation}
		l_K = (n_1-1,2,0,1,n_5).
	\end{equation}
	To compare the number of paths between $F$ and $K$, we shall note that by doing this modification we lose paths going through the first vertex in $F$, but we get the paths that go through a fixed edge in the first $2$ that appears in $K$. The first half we have already computed -- this is $P_1$ from the start of this proof, but specialized to our parameters:
	\begin{equation}
		P_1 = n_1+4n_5+7.
	\end{equation}
	The other part, which we denote by $P_K$, is the number of paths that pass through the edge in the middle of the following graph:
	\begin{equation*}
		K_{L,n_1}\to v_{n_1}\to v_{1+n_1}\xrightarrow{2}K_{R,2+n_1}
	\end{equation*}
	Hence $P_K=L_{n_1}R_{2+n_1}$ and as always. In this case these numbers are fairly easy to compute:
	\begin{align*}
		& L_{n_1} = n_1-1,\\
		& R_{2+n_1} = 1+2+4(n_5+1) = 4n_5 + 7.
	\end{align*}
	From symmetry of this case again we can assume that $n_1\ge n_5\ge1$ without loss of generality. Thus the difference is given by:
	\begin{gather}
		(n_1-1)(4n_5+7) - n_1-4n_5-7 = 4n_1n_5 +6 n_1-8n_5 - 14\ge 2n_1-14.
	\end{gather}
	This means that if $n_1>7$, then our modification would increase the total amount of paths. As $n_1\ge n_5$, then that means would not work only for all graphs with at most seven $1$'s and two $2$'s. Such graphs have at most $18$ edges and so all of these edge cases are covered in our data (there are examples with multiple $1$'s on the edges).
\end{proof}

We are now ready  to prove our main result:
\begin{theorem}\label{main}
Let $F\in \mathcal{T}^N_3$. If $\sup_{E\in\mathcal{T}^N_3}P^N\!(E)=P^N\!(F)$ and $(1,n_2,n_3,n_4,1)$ be its associated sequence and $n_3>1$ then $\max \{n_2,n_4\}\le 2$.
\end{theorem}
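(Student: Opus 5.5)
The plan is an exchange argument of the same kind as in Lemmas~\ref{lem4}, \ref{lem3.1} and~\ref{lem5}. Assume, towards a contradiction, that $F$ is optimal with associated sequence $(1,n_2,n_3,n_4,1)$, $n_3>1$ and, by the mirror symmetry, $n_2\ge 3$. Lemma~\ref{lem5} then gives $n_4\ge n_2-1\ge 2$. The basic move is to trade three consecutive double edges ($6$ edges, segment length $3$) for two consecutive triple edges ($6$ edges, segment length $2$). This produces a trunk $G\in\mathcal{T}^N_3$ of the same weight, and $|FP(G)|>|FP(F)|$ would contradict optimality. The reason to expect a gain is that $2^3=8<9=3^2$, so the long paths crossing the block are multiplied by $9$ instead of $8$, while only one vertex is lost.

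The key computation is a local count. Isolate a segment $a_0\xrightarrow{2}a_1\xrightarrow{2}a_2\xrightarrow{2}a_3$ of $F$. Let $x$ be the number of paths ending at $a_0$ and $y$ the number of paths starting at $a_3$, both counting the trivial path. The paths that do not enter the segment are unchanged. Counting the rest exactly as in the proof of Lemma~\ref{lem4}:
\begin{itemize}
\item paths crossing the whole segment contribute $8xy$ in $F$ and $9xy$ in $G$;
\item paths entering from the left and stopping inside contribute $6x$ versus $3x$;
\item paths starting inside and leaving to the right contribute $6y$ versus $3y$;
\item the purely internal paths, together with the interior vertices, contribute $4$ versus $1$.
\end{itemize}
Hence
\[
|FP(G)|-|FP(F)| = xy-3x-3y-3=(x-3)(y-3)-12 .
\]
It therefore suffices to place the removed segment so that both $x$ and $y$ are comfortably above $3$, for instance $x,y\ge 8$.

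I would take the segment to be the last three $2$'s before the block of $3$'s. Then $y\ge 1+3+3^2+\dots$, which is at least $13$ since $n_3\ge2$, so $y$ is large. The quantity $x$ is the number of paths ending at $v_{n_2-1}$ in the graph $1,2,\dots,2$ with $n_2-3$ twos, namely $x=2^{n_2-3}\cdot 3-1$. This is already $\ge 5$ when $n_2\ge 4$, and the inequality $(x-3)(y-3)>12$ follows from $y\ge 13$ once $x\ge 5$. If the contracted trunk has the triple edges in a slightly different position, the count is identical, because only $x$, $y$ and the local multiplicities enter. The shape constraints of Corollary~\ref{edgeseq} do not need to hold for $G$; it only has to lie in $\mathcal{T}^N_3$.

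The main obstacle is the boundary case $n_2=3$ (and dually $n_4=3$). Here $x=2$ and the move above loses paths. For this case I would use $n_4\ge2$ and instead take two double edges from each side. Concretely, I would compare $(1,3,n_3,n_4,1)$ with $(1,2,n_3+1,n_4-1,1)$ plus one redistributed edge, or alternatively choose a segment straddling the left $2$'s where $x$ is smallest but $y$ is huge. The plan is to write the difference again in the factorised form $(x'-c)(y'-c')-d$ and check that it is positive once $n_3\ge2$. The finitely many small configurations in which this inequality fails I would settle by direct evaluation, cross-checking against the table of experimental data, in the same way the proof of Lemma~\ref{extremedies} disposes of its residual cases.
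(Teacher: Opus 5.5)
\textbf{There is a genuine gap: the case $n_2=3$ (up to mirror) is not proved.}

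Your local exchange is correct as far as it goes. Replacing three consecutive double edges by two triple edges changes the count by $(x-3)(y-3)-12$. With $x=3\cdot 2^{n_2-3}-1\ge 5$ and $y\ge 13$ this disposes of $n_2\ge 4$, and mirrored, of $n_4\ge 4$. By Lemma~\ref{lem5} this leaves exactly $(n_2,n_4)\in\{(3,2),(3,3)\}$ up to mirror, for every $n_3\ge 2$, and there you have no argument.

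The local move cannot work in these cases. The only runs of three consecutive $2$'s have $x=2$ (left flank) or $y=2$ (right flank, when $n_4=3$). The change is then $-(y-3)-12<0$, respectively $-(x-3)-12<0$.

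Your fallback is not a proof. \emph{$(1,2,n_3+1,n_4-1,1)$ plus one redistributed edge} does not say where the edge goes, and no difference is computed. The residual family is also not finite: it is infinite in $n_3$, so the table (which stops at $N=25$) cannot settle it. This is precisely where the hypothesis $n_3>1$ matters. The $N=15$ optimum $(1,2,2,2,3,2,2,1)$ is $(1,3,1,2,1)$ in shorthand. So any argument for $n_2=3$ must use $n_3\ge 2$ in an essential way, and a move that never crosses the $3$-block cannot do that.

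The missing idea is a non-local exchange that absorbs flank edges into the $3$-block, so that the factor $3^{n_3}$ enters the gain. Compare $(1,3,n_3,n_4,1)$ with $(1,1,n_3+2,n_4-1,1)$. This is what your redistributed edge becomes if you put it on the remaining left $2$ next to the $3$'s. It is the paper's move $(1,n_2,n_3,n_4,1)\mapsto(1,n_2-2,n_3+2,n_4-1,1)$, which the paper applies to all $n_2\ge 3$.

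Let $L=5$ be the number of paths ending at $v_3$. Let $R=3\cdot 2^{n_4-1}-1$ be the number of paths starting at $v_{n_3+6}$, the target of the first double edge after the $3$-block. Counting crossing, entering, leaving and internal paths gives the gain
\[
3^{n_3}LR-3^{n_3+1}-\tfrac32 L\,(3^{n_3}+1)-R\left(\tfrac52\, 3^{n_3}+\tfrac12\right).
\]
This equals $2\cdot 3^{n_3}-10$ for $n_4=2$ and $17\cdot 3^{n_3}-13$ for $n_4=3$. Both are positive for $n_3\ge 2$. The first is negative for $n_3=1$, matching the $N=15$ optimum.

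So your $n_2\ge 4$ argument is a valid alternative only on the easy range. The theorem's content, the $n_2=3$ boundary case, still needs this move or an equivalent one.
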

\begin{proof}
Let us assume $n_2\ge 3$ and decompose $F$ as follows:
\begin{center}
\begin{tikzpicture}[scale=4]
\draw (-.1,0) node{$F_{L,n_2}$};
\draw[->, shorten >=5pt, shorten <=5pt,out=60, in=120] (0,0) to (.5,0);
\draw[->, shorten >=5pt, shorten <=5pt,out=0, in=180]  (0,0) to (.5,0);
\draw (.25,.2) node{{\small $2$ }};
\fill (.5,0)  circle[radius=.4pt];
\draw (.5,-.1) node{ {\small $v_{n_2+1}$ }};
\draw[->, shorten >=5pt, shorten <=5pt,out=60, in=120] (.5,0) to (1,0);
\draw[->, shorten >=5pt, shorten <=5pt,out=0, in=180]  (.5,0) to (1,0);
\draw (.75,.2) node{{\small $2$ }};
\fill (1,0)  circle[radius=.4pt];
\draw (1,-.1) node{ {\small $v_{n_2+2}$ }};
\draw[->, shorten >=5pt, shorten <=5pt,out=60, in=120] (1, 0) to (1.5,0);
\draw[->, shorten >=5pt,shorten <=5pt,out=30, in=150] (1, 0) to (1.5,0);
\draw[->, shorten >=5pt, shorten <=5pt,out=0, in=180] (1, 0) to (1.5,0);
\draw (1.25,.2) node{{\small $3$ }};
\fill (1.5,0)  circle[radius=.4pt];
\draw (1.5,-.1) node{ {\small $v_{n_2+3}$ }};
\draw (1.75,0) node{$\cdots$};
\fill (2,0)  circle[radius=.4pt];
\draw (2,-.1) node{ {\small $v_{n_2+n_3+1}$ }};
\draw[->, shorten >=5pt, shorten <=5pt,out=60, in=120] (2, 0) to (2.5,0);
\draw[->, shorten >=5pt,shorten <=5pt,out=30, in=150] (2, 0) to (2.5,0);
\draw[->, shorten >=5pt, shorten <=5pt,out=0, in=180] (2, 0) to (2.5,0);
\draw (2.25,.2) node{{\small $3$ }};
\fill (2.5,0)  circle[radius=.4pt];
\draw (2.5,-.1) node{ {\small $v_{n_2+n_3+2}$ }};
\draw[->, shorten >=5pt, shorten <=5pt,out=60, in=120] (2.5, 0) to (3,0);
\draw[->, shorten >=5pt, shorten <=5pt,out=0, in=180] (2.5, 0) to (3,0);
\draw (2.75,.2) node{{\small $2$ }};
\draw (3.25,0) node{$F_{R,n_2+n_3+3}$};
\end{tikzpicture}.
\end{center}
Let us considet the graph $G$ whose associated sequence is $(1,n_2-2,n_3+2,n_4-1,1)$ and decompose is as
\begin{center}
\begin{tikzpicture}[scale=4]
\draw (-.15,0) node{$G_{L,n_2}$};
\draw[->, shorten >=5pt, shorten <=5pt,out=60, in=120] (0,0) to (.5,0);
\draw[->, shorten >=5pt,shorten <=5pt,out=30, in=150] (0,0) to (.5,0);
\draw[->, shorten >=5pt, shorten <=5pt,out=0, in=180]  (0,0) to (.5,0);
\draw (.25,.2) node{{\small $3$ }};
\fill (.5,0)  circle[radius=.4pt];
\draw (.5,-.1) node{ {\small $v_{1+n_2}$ }};
\draw (.75,0) node{$\cdots$};
\fill (1,0)  circle[radius=.4pt];
\draw (1,-.1) node{ {\small $v_{1+n_2+n_3}$ }};
\draw[->, shorten >=5pt, shorten <=5pt,out=60, in=120] (1, 0) to (1.5,0);
\draw[->, shorten >=5pt,shorten <=5pt,out=30, in=150] (1, 0) to (1.5,0);
\draw[->, shorten >=5pt, shorten <=5pt,out=0, in=180] (1, 0) to (1.5,0);
\draw (1.25,.2) node{{\small $3$ }};
\draw (1.75,0) node{$G_{R,n_2+n_3+1}$};
\end{tikzpicture}.
\end{center}
Since both $F_{L,n_2}$ and $G_{L,n_2}$ have associated sequence $(1,n_2-2,0,0,0)$ let us denote by $L$ the number of paths ending at their sinks.
Similarly both $F_{R,n_2+n_3+3}$ and $G_{R,n_2+n_3+1}$ have associated sequence $(0,0,0,n_4-1,1)$ and we denote by $R$ the number of paths begining at their sources.
\begin{equation*}
\begin{split}
\vert FP(F)\vert&= \vert FP(F_{L,n_2}) \vert +\vert FP(F_{n_2+1,n_2+n_3+2}) \vert +2L+4L\sum\limits_{k=0}^{n_3} 3^k \\
&+2R\sum\limits_{k=0}^{n_3} 3^k+4\cdot 3^{n_3}R+8\cdot 3^{n_3}RL +\vert FP(F_{R,n_2+n_3+3})\vert \\
\vert FP(G)\vert&=\vert FP(G_{L,n_2}) \vert +\vert FP(G_{1+n_2,1+n_2+n_3}) \vert +L\sum\limits_{k=1}^{n_3+1} 3^k \\
&+R\sum\limits_{k=1}^{n_3+1} +3^{n_3+2} LR +\vert FP(G_{R,n_2+n_3+1})\vert
\end{split}
\end{equation*}
Calculating the difference we get:
\begin{equation*}
\begin{split}
\vert FP(G)\vert-\vert FP(F)\vert
=3^{n_3}LR-3^{n_3+1}-\frac{3}{2}L(3^{n_3}+1)-R \left( \frac{5}{2}3^{n_3}+\frac{1}{2} \right)
\end{split}
\end{equation*}
We have to consioder two cases: \\
Case 1. 
$n_2=n_4$, then $L=R$ and we get:
\begin{equation*}
\begin{split}
\vert FP(G)\vert-\vert FP(F)\vert
=3^{n_3}L^2-(4\cdot 3^{n_3}+2)L-3^{n_3+1}
\end{split}
\end{equation*}
We get $\Delta=4(7\cdot 3^{2n_3}+4\cdot 3^{n_3}+1)$ and the bigger of the roots is given by:
\begin{equation*}
L
=\frac{4\cdot 3^{n_3}+2+2\sqrt{7\cdot 3^{2n_3}+4\cdot 3^{n_3}+1}}{2\cdot 3^{n_3}}
=2+\sqrt{7+4\cdot 3^{-n_3}+3^{-2n_3}} +3^{-n_3}.
\end{equation*}
Since as a function of $n_3$ it is a decreasing function and for $n_3=1$ it gives $L=\frac{7+2\sqrt{19}}{3}>5$ we see the swich does not increases the number of paths, but for $n_3=2$ it gives $L=\frac{19+2\sqrt{151}}{9}<5$ therefore the switch increases the number of paths. \\
Case 2.
$n_2=n_4+1$, then $L=2R+1$ and we get:
\begin{equation*}
\begin{split}
\vert FP(G)\vert-\vert FP(F)\vert
=2\cdot 3^{n_3}L^2-\left( \frac{11}{2}\cdot 3^{n_3}+2 \right)L-\frac{11}{2}3^{n_3}-\frac{1}{2}
\end{split}
\end{equation*}
We get $\Delta=\frac{1}{4}\left( 297\cdot 3^{2n_3}+104\cdot 3^{n_3}+16 \right)$ and the bigger of the roots is given by:
\begin{equation*}
L=\cfrac{\frac{11}{2} \cdot 3^{n_3}+2+\frac{1}{2}\sqrt{297\cdot 3^{2n_3}+104\cdot 3^{n_3}+16}}{4\cdot 3^{n_3}}=\frac{11}{8}+\frac{1}{8}\sqrt{297+104\cdot 3^{-n_3}+16 3^{-2n_3}} +\frac{1}{2}\cdot 3^{-n_3}
\end{equation*}
Since as a function of $n_3$ it is a decreasing function and for $n_3=1$ it gives $L=\frac{37+\sqrt{3001}}{24}<5$ we see that the switch increases the number of paths.
\end{proof}
 
\newpage
\subsection{Three Sisters}
\begin{lemma}\label{centrepaths}
	Let $F$ be a trunk graph of the form $(0,0,m,0,0)$, with $m\ge 1$. Then the number $|FP(F)|$ of finite paths in $F$ is given by
$$
|FP(F)|=\frac{1}{4}\left( 3^{m+2}-2(m+2)-1\right).
$$
\end{lemma}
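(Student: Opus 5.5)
The plan is to count paths directly by their length and then sum. A trunk graph with associated sequence $(0,0,m,0,0)$ is, by the shorthand of Corollary~\ref{edgeseq}, the trunk $(3,3,\dots,3)$ with $m$ entries. So it has $m+1$ vertices $v_1,\dots,v_{m+1}$ and exactly three edges from $v_i$ to $v_{i+1}$ for each $1\le i\le m$.

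First I will count paths of each fixed length. Any path of length $k\ge 1$ is determined by two choices:
\begin{itemize}
\item its starting vertex $v_i$, with $1\le i\le m+1-k$;
\item one of the three parallel edges at each of its $k$ steps.
\end{itemize}
Hence $|FP_k(F)|=(m+1-k)\,3^k$ for $0\le k\le m$. For $k=0$ this gives the $m+1$ vertices, and there are no longer paths. Therefore
\[\nonumber
|FP(F)|=\sum_{k=0}^{m}(m+1-k)\,3^k .
\]

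Next I will evaluate this sum in closed form. I will avoid differentiating a geometric series and instead swap the order of summation. Writing $m+1-k=\#\{j: k\le j\le m\}$, the sum becomes
\[\nonumber
\sum_{j=0}^{m}\sum_{k=0}^{j}3^k=\sum_{j=0}^{m}\frac{3^{j+1}-1}{2}.
\]
Summing the geometric series once more gives
\[\nonumber
\frac12\left(\frac{3^{m+2}-3}{2}-(m+1)\right)=\frac14\left(3^{m+2}-2(m+2)-1\right),
\]
which is the claimed formula.

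There is no real obstacle here; the only place to be careful is the bookkeeping of indices in the double sum, including not forgetting the length-$0$ paths. As a sanity check I will verify $m=1$: the graph has $2$ vertices and $3$ edges, so $5$ paths, and indeed $\tfrac14(27-6-1)=5$. As an alternative route, I could induct on $m$ using the recursion $|FP(F_{m+1})|=|FP(F_m)|+1+3L$, where $L=\tfrac{3^{m+1}-1}{2}$ is the number of paths ending at the last vertex of $F_m$. This mirrors the $L_m$ recursion introduced before Lemma~\ref{lem4}.
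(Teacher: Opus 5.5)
Your proof is correct. After you swap the summation, the double sum $\sum_{j=0}^{m}\sum_{k=0}^{j}3^k$ is exactly the sum, over end vertices $v_{j+1}$, of the geometric sums $\frac12(3^{j+1}-1)$ that the paper adds one at a time in its induction on $m$, and your ``alternative route'' is precisely the paper's proof. The only real difference is that you reach this sum by counting paths by length and then evaluate it in closed form, which derives the formula rather than merely verifying it by induction.
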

\begin{proof}
	This is an easy inductive proof. Notice that if $F=(0,0,1,0,0)$, then the above formula clearly holds. Suppose that the above formula holds for some $m\ge 1$ and $F=(0,0,m+1,0,0)$. Let us denote by $v$ the last vertex of $F$. Observe that
	\begin{equation*}
		FP(F) = \{\text{Paths ending before }v\} \sqcup \{\text{Paths ending at }v\}.
	\end{equation*}
	The size of the first set is given by the inductive hypothesis as this is exactly the number of paths in a graph of the form $(0,0,m,0,0)$. The size of the second set is given by a geometric sum:
	\begin{equation*}
		|\{\text{Paths ending at }v\}| = 1 + 3 + 3^2+\dots + 3^{m+1} = \frac12(3^{m+2}-1).
	\end{equation*}
	Therefore we see that
	\begin{equation*}
		|FP(F)| = \frac14(3^{m+2}-2(m+2)-1) + \frac12(3^{m+2}-1) = \frac14(3^{m+3}-2(m+3)-1)
	\end{equation*}
	which finishes the proof.
\end{proof}
\begin{theorem}\label{3sis}
For $N \geq 16$, where $N$ is the number of edges, the following formulas compute $P^N_*$, 
the maximal number of paths for a graph with $N=3k+i$, where $i \in \{0,1,2 \}$:
\phantom{.}\\
\begin{center}
\setlength\extrarowheight{16pt}
\boxed{
\begin{tabular}{lll}
$i=0$&& $P^N_*=\displaystyle\frac{1}{4}\left( 121\cdot 3^{k-2}-2k+7\right)$\\
$i=1$&& $P^N_*=\displaystyle\frac{1}{4}\left( 529\cdot 3^{k-3}-2k+25\right)$\\
$i=2$ && $P^N_*=\displaystyle\frac{1}{4}\left( 253\cdot 3^{k-2}-2k+15\right)$\\
\end{tabular}}
\end{center}
\end{theorem}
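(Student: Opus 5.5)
The plan is to use the structural results already proved to pin down the shape of an optimal trunk completely, and then to count its paths in closed form. By Lemma~\ref{lem0} and Lemma~\ref{lem1} it suffices to maximise over $\mathcal{T}^N_3$. For an optimal $F$, Corollary~\ref{edgeseq} gives an associated sequence $(n_1,n_2,n_3,n_4,n_5)$. Lemma~\ref{extremedies} (valid for $N\ge 18$, with $N=16,17$ taken from the table) forces $n_1=n_5=1$. Theorem~\ref{main} then gives $n_2,n_4\le 2$ once $n_3>1$, and Lemma~\ref{lem5} gives $|n_2-n_4|\le 1$.

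I will also need to rule out $n_2=0$ or $n_4=0$, i.e.\ a $1$ adjacent to a $3$. This is exactly what Lemma~\ref{lem2} forbids. Hence $n_2,n_4\in\{1,2\}$. For $N\ge 16$ we have $3n_3=N-2-2(n_2+n_4)\ge 16-10$, so $n_3\ge 2$, and the hypothesis $n_3>1$ of Theorem~\ref{main} is satisfied.

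The edge count is
\[
N=2+2(n_2+n_4)+3n_3,\qquad s:=n_2+n_4\in\{2,3,4\}.
\]
The residue of $N$ modulo $3$ determines $s$ uniquely, because $2+2s$ takes the three distinct values $0,2,1$ modulo $3$ for $s=2,3,4$. This gives exactly one candidate per residue class, up to mirror symmetry:
\begin{itemize}
\item $i=0$: the trunk $(1,2,3^{k-2},2,1)$;
\item $i=2$: the trunk $(1,2,2,3^{k-2},2,1)$ or its mirror image;
\item $i=1$: the trunk $(1,2,2,3^{k-3},2,2,1)$.
\end{itemize}
These agree with the table for $N\ge 16$. Since the maximum is attained and all maximisers have this shape, $P^N_*$ equals the path count of this trunk.

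The remaining task is the count. I will write the trunk as a left wing $A$, the central block $B=(0,0,m,0,0)$ and a right wing $C$, and split the paths as
\[
|FP|=|FP(A)|+|FP(B)|+|FP(C)|+(\text{paths crossing a junction}),
\]
taking care not to double-count the two junction vertices. Lemma~\ref{centrepaths} supplies $|FP(B)|=\tfrac14\bigl(3^{m+2}-2(m+2)-1\bigr)$. The crossing paths factor as products: the number of paths in $A$ ending at the left junction, times the paths through the central block, times the number of paths in $C$ starting at the right junction. I will handle these with the quantities $L_m$ and $R_m$ and the recursion $L_{m+1}=1+|s^{-1}(v_{m-1})|L_m$. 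For the short wings these are small constants, e.g.\ $L=3$ for the wing $1,2$ and $L=7$ for $1,2,2$. The cross terms are then geometric sums in $3$, and substituting $m=k-2$ or $m=k-3$ should collapse everything to the form $\tfrac14(c\cdot 3^{j}-2k+d)$. As checks, the $i=0$ formula gives $2449$ at $N=18$ and the $i=1$ formula gives $1194$ at $N=16$, matching the table.

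I expect the main obstacle to be the rigour of the reduction rather than the counting. Two points need care: the small-$N$ exceptions in Lemma~\ref{extremedies}, and whether Theorem~\ref{main} and Lemma~\ref{lem5} genuinely exclude every alternative, including the $n_2=n_4+1$ case and the case $n_3\le 1$. To deal with this I will first verify $16\le N\le 25$ directly against the experimental table. For $N\ge 18$ I will check that every hypothesis used ($n_3\ge 2$, $n_1=n_5=1$) is forced, so that the three candidates are the only survivors.
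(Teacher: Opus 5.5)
Your route is the paper's. You reduce to $\mathcal{T}^N_3$, pin the optimum down to $(1,n_2,n_3,n_4,1)$ with the structural lemmas, and read off one trunk per residue class; yours are exactly the paper's $G_0=(1,1,k-2,1,1)$, $G_1=(1,2,k-3,2,1)$ and $G_2=(1,2,k-2,1,1)$. You then count by splitting off the block of $3$'s via Lemma~\ref{centrepaths}, which is essentially the paper's $P_1(m)$, $P_2(m)$ bookkeeping.

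\textbf{The exclusion of $n_3\le 1$ is circular.} You take $n_2,n_4\le 2$ from Theorem~\ref{main}, whose hypothesis is $n_3>1$. You then use $n_2+n_4\le 4$ to get $3n_3=N-2-2(n_2+n_4)\ge 6$. Nothing in the paper bounds $n_2,n_4$ when $n_3\in\{0,1\}$. So trunks such as $(1,2^s,1)$ or $(1,2^a,3,2^b,1)$ with $a+b$ large are not excluded by anything you cite, and your closing promise to ``check'' $n_3\ge 2$ is not an argument. (The paper's proof simply invokes Lemma~\ref{extremedies} and Theorem~\ref{main} and passes over this too.)

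It can be repaired. The difference formula in the proof of Theorem~\ref{main},
\begin{equation*}
|FP(G)|-|FP(F)|=3^{n_3}LR-3^{n_3+1}-\tfrac32\,(3^{n_3}+1)L-\tfrac12\,(5\cdot 3^{n_3}+1)R,
\qquad L=3\cdot 2^{n_2-2}-1,\quad R=3\cdot 2^{n_4-1}-1,
\end{equation*}
is derived without using $n_3>1$ and holds for $n_3=0,1$ as well. Suppose $n_3\le 1$ and $N\ge 18$; then $n_2+n_4\ge 7$. Moreover $|n_2-n_4|\le 1$: by Lemma~\ref{lem5} if $n_3=1$, and if $n_3=0$ the split of the $2$'s is yours to choose. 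Using mirror symmetry you may therefore take $n_2\ge 4$ and $n_4\ge 3$, so $L,R\ge 11$. The difference is then $(L-3)(R-3)-12>0$ for $n_3=0$ and $3LR-6L-8R-9>0$ for $n_3=1$, contradicting optimality. Plug in $L,R$ directly rather than reusing that proof's case split, whose relations are not the right ones: for $n_2=n_4$ one has $R=2L+1$, and for $n_2=n_4+1$ one has $L=R$.

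\textbf{Your sample constants for the count are wrong.} The number of paths in the wing $1,2$ ending at the junction is $1+2+2=5$, not $3$. For the wing $1,2,2$ it is $1+2+4+4=11$, not $7$. Your values belong to the wings $2$ and $2,2$, i.e.\ the leading single edge was dropped. Used as stated they give the wrong leading coefficients, and your table checks only test the target formulas, not your computation.

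With the correct values the count goes through. Write $m$ for the number of $3$'s, and $\ell,r\in\{4,10\}$ for the numbers of nontrivial paths from the two wings into and out of the central block. A wing of length $2$ or $3$ contains $8$ or $19$ paths respectively. The count
\begin{equation*}
|FP(A)|+|FP(C)|+\tfrac14\bigl(3^{m+2}-2(m+2)-1\bigr)-2+(\ell+r)\,\tfrac12\bigl(3^{m+1}-3\bigr)+\ell r\,3^{m}
\end{equation*}
yields $\tfrac14(121\cdot 3^m-2m+3)$, $\tfrac14(529\cdot 3^m-2m+19)$ and $\tfrac14(253\cdot 3^m-2m+11)$ for $m=k-2$, $k-3$, $k-2$. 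These are the stated formulas.
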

\begin{proof}
	Consider the graph $F_1=(1,1,m,0,0)$. Let $v$ be the vertex in $F_1$ such that it emits three edges and its predecessor emits two edges. We want to find the number of paths in $F_1$ that start at $v$ or earlier and are not fully contained in the subgraph of $F_1$ of the form $(0,0,m,0,0)$. We will denote this number by $P_1(m)$. Now this number can be computed in an analogous way as we often did throughout this paper:
	\begin{equation*}
		P_1(m) = (1+3+\dots+3^{m})(2+2) + 3 = 2\cdot3^{m+1} + 1.
	\end{equation*}
	Likewise, let $F_2=(1,2,m,0,0)$, $v$ be the vertex that emits three edges and whose predecessor emits only two edges. Let $P_2(m)$ be the number of paths in $F_2$ that start at $v$ or earlier and are not fully contained in the $(0,0,m,0,0)$ part of our graph. Then $P_2(m)$ is given by
	\begin{equation*}
		P_2(m) = (1+3+\dots+3^{m})(2 + 4 + 4) + 8 = 5\cdot3^{m+1} + 3.
	\end{equation*}
	These formulas together with Lemma~\ref{centrepaths} will let us compute the formulas in the formulation of the theorem. Namely, observe that if $i=0$, then by Lemma~\ref{extremedies} and Theorem~\ref{main} our optimal graph has to be of the form $G_0=(1,1,k-2,1,1)$. Let $v$ be the vertex in $G_0$ that emits three edges and its predecessor emits two edges and $w$ be the vertex in $G_0$ that emits two edges and its predecessor emits three and $G_0^3=(0,0,k-2,0,0)$ be the full subgraph of $G_0$ that contains all vertices emitting three edges and $w$. We can decompose the set $FP(G_0)$ into the following subsets:
	\begin{gather}
		\nonumber FP(G_0) = \{\text{Paths that start at or before  }v\text{, not fully contained in }G_0^3\text{, ending before }w\}\sqcup\\  \label{decomposition}
		\sqcup\{\text{Paths that end at or after }w\text{, not fully contained in }G_0^3\text{, starting after }v\} \sqcup\\
		\nonumber \sqcup FP(G_0^3) \sqcup \{\text{Paths thru both }v, w\text{, not fully contained in }G_0^3\}.
	\end{gather}
	Now we need to find the cardinality of these sets.  Observe that the following holds:
	\begin{align*}
		& |\text{First summand in \eqref{decomposition}}| = P_1(k-3) = 2\cdot3^{k-2} + 1,\\
		& |\text{Second summand in \eqref{decomposition}}| = P_1(k-3) = 2\cdot3^{k-2}+1,\\
		& |\{\text{Paths in }G_0^3\}| = \frac14(3^k-2k-1),
	\end{align*}
	where the last formula is an application of Lemma~\ref{centrepaths}. As for the last remaining set, we see that
	\begin{align*}
		 |\{\text{Paths thru both }v, w\text{, not fully contained in }G_0^3\}| = 3^{k-2}(1+2+2)^2-3^{k-2}=24\cdot 3^{k-2}.\\
	\end{align*}
	In summary, we get
	\begin{equation*}
		P_*^N = 4\cdot 3^{k-2}+2 + \frac14(3^k - 2k - 1) + 24\cdot 3^{k-2} = \frac14(121\cdot3^{k-2} - 2k + 7).
	\end{equation*}
	The other cases are completely analogous. For the $i=1$ case, the optimal graph is given by $G_1=(1,2,k-3,2,1)$. We decompose $FP(G_1)$ in the same fashion as for $G_0$. Thus $|FP(G_1)|$ can be written as the following sum:
	\begin{equation*}
		|FP(G_1)| = P_2(k-4)+P_2(k-4) + \frac14(3^{k-1}-2(k-1)-1) + |\{\text{Paths thru both }v,w\}|.
	\end{equation*}
	The last summand is given by
	\begin{equation*}
		|\{\text{Paths thru both }v,w\}| = 3^{k-3}(1+2+4+4)^2-3^{k-3} = 120\cdot 3^{k-3}.
	\end{equation*}
	Thus our formula for $P_*^N$ becomes
	\begin{equation*}
		P_*^N = 10\cdot 3^{k-3}+6 + \frac14(3^{k-1}-2k+1) + 120\cdot 3^{k-3} = \frac14(529\cdot 3^{k-3}-2k+25).
	\end{equation*}
	Finally, for the $i=2$ case the optimal graph is of the form $G_2=(1,2,k-2,1,1)$ (equivalently its mirror image). The appropriate decomposition lets us describe $|FP(G_2)|$ as the following sum
	\begin{gather*}
		|FP(G_2)| = P_1(k-3)+P_2(k-3) + \frac14(3^k - 2k - 1) + 3^{k-2}(1+2+2)(1+2+4+4)-3^{k-2} = \\
		= 2\cdot 3^{k-2}+1 + 5\cdot 3^{k-2} + 3 + \frac14(3^k - 2k -1) + 54\cdot 3^{k-2} = \\
		= \frac14( 253\cdot 3^{k-2} - 2k + 15).
	\end{gather*}
	Which finishes the proof.
\end{proof}

\section{Leavitt path algebras}
\noindent
Every finite-dimensional Leavitt path algebra over complex numbers is a graph C*-algebra.

\begin{theorem}\cite[Proposition 3.5]{aapsm07}
Let $E$ be a finite acyclic graph with $N>0$ edges, and $\{v_i\}^n_1$ be the set of all its sinks. Denote by $p_i$ the number of all paths ending at~$v_i$. 
Then
$$
L_k(E)=\bigoplus_{i=1}^n M_{p_i}(k).
$$
\end{theorem}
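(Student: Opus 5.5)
The statement is the structure theorem for finite-dimensional Leavitt path algebras of a finite acyclic graph, cited from \cite[Proposition 3.5]{aapsm07}. Since this paper only uses it, I would reprove it directly from the defining relations of $L_k(E)$. These are the path-algebra relations together with ghost edges $e^*$ subject to (CK1) $e^*f=\delta_{e,f}\,t(e)$, and (CK2) $v=\sum_{s(e)=v}ee^*$ for every vertex $v$ that is not a sink.

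\textbf{Step 1: a spanning set.} Using (CK1), every word in edges and ghost edges reduces to a monomial $\alpha\beta^*$ with $\alpha,\beta$ finite paths and $t(\alpha)=t(\beta)$. Since $E$ is acyclic and finite, there are only finitely many paths, so $L_k(E)$ is finite-dimensional.

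\textbf{Step 2: reduce to monomials ending at sinks.} If $t(\alpha)=t(\beta)=w$ is not a sink, apply (CK2) at $w$ to get
\[
\alpha\beta^*=\sum_{s(e)=w}(\alpha e)(\beta e)^*.
\]
Each summand has strictly longer paths, and acyclicity bounds path lengths. Iterating therefore terminates, and $L_k(E)$ is spanned by the elements $\alpha\beta^*$ with $t(\alpha)=t(\beta)$ a sink.

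\textbf{Step 3: matrix units.} Fix a sink $v_i$ and let $\alpha,\beta,\gamma,\delta$ be paths ending at $v_i$. By (CK1), $\beta^*\gamma$ is nonzero only if one of $\beta,\gamma$ extends the other. A sink emits no edges, so a path ending at $v_i$ cannot be properly extended to another path ending at $v_i$. Hence
\[
(\alpha\beta^*)(\gamma\delta^*)=\delta_{\beta,\gamma}\,\alpha\delta^*,
\]
and products of units attached to different sinks vanish. So for each $i$ the span of $\{\alpha\beta^*\}$ over paths ending at $v_i$ is a homomorphic image of $M_{p_i}(k)$, these images are mutually orthogonal ideals, and they sum to $L_k(E)$ by Step 2. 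This gives a surjection $\bigoplus_i M_{p_i}(k)\to L_k(E)$.

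\textbf{Step 4: injectivity, the main obstacle.} It remains to show the surjection is an isomorphism. Each $M_{p_i}(k)$ is simple, so it suffices that no block maps to zero, that is, $v_i\neq0$ in $L_k(E)$ for each sink $v_i$. I would construct an explicit representation on the vector space $V$ with basis the set of all finite paths ending at sinks. A vertex $u$ acts as the projection onto paths starting at $u$. An edge $e$ sends $\gamma\mapsto e\gamma$ when $s(\gamma)=t(e)$, and to $0$ otherwise. The ghost $e^*$ strips a leading $e$. One then checks the relations: (CK1) is immediate, and (CK2) holds at a non-sink $w$ because every path starting at $w$ and ending at a sink has positive length, so it begins with a unique edge $e$ with $s(e)=w$. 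In this representation $v_i$ fixes the trivial path $v_i$, so $v_i\neq 0$. Alternatively, one could invoke the general graded uniqueness theorem for Leavitt path algebras to the same effect.
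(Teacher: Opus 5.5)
The paper gives no proof of this statement; it imports it from \cite{aapsm07}. Your argument is correct and complete, and it is essentially the standard proof behind that citation. The steps are: (CK2) pushes every monomial $\alpha\beta^*$ down to a sink; the monomials $\alpha\beta^*$ with $t(\alpha)=t(\beta)=v_i$ form a system of $p_i\times p_i$ matrix units, because a path ending at a sink cannot be properly extended to another such path; and distinct sinks give orthogonal ideals that together span $L_k(E)$.

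The one input that is not formal manipulation of the relations is that no sink idempotent $v_i$ vanishes. You settle this directly, without outside results, using the path representation on $V$. That representation actually gives more than you use. Each $\alpha\beta^*$ acts on the span of paths ending at $v_i$ exactly as the elementary matrix $E_{\alpha,\beta}$. So the composite $\bigoplus_i M_{p_i}(k)\to L_k(E)\to\mathrm{End}(V)$ is the block-diagonal embedding, and injectivity follows at once without appealing to simplicity of the blocks.
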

\begin{remark} Out-splitting \cite{elersruiz} the  graph 
\begin{center}
\begin{tikzpicture}[scale=4]

\fill (0,0)  circle[radius=.4pt];
\draw[->, shorten >=5pt, shorten <=5pt] (0,0) to (.5,0);
\fill (.5,0)  circle[radius=.4pt];
\draw[->, shorten >=5pt, shorten <=5pt,out=60, in=120] (.5, 0) to (1,0);
\draw[->, shorten >=5pt, shorten <=5pt,out=-50, in=-130] (.5, 0) to (1,0);
\fill (1,0)  circle[radius=.4pt];

\end{tikzpicture}
\end{center}
at the middle vertex yields the graph 
\begin{center}
\begin{tikzpicture}[scale=4]

\fill (0,0)  circle[radius=.4pt];
\draw[->, shorten >=5pt, shorten <=5pt] (0,0) to (.5,.10);
\draw[->, shorten >=5pt, shorten <=5pt] (0,0) to (.5,-.10);
\fill (.5,.10)  circle[radius=.4pt];
\fill (.5,.-.10)  circle[radius=.4pt];
\draw[->, shorten >=5pt, shorten <=5pt]  (.5,.10) to (1,0);
\draw[->, shorten >=5pt, shorten <=5pt]   (.5,.-.10) to (1,0);
\fill (1,0)  circle[radius=.4pt];

\end{tikzpicture}.
\end{center}
 The Leavitt path algebras of these graphs are graded isomorphic to $M_5(k)$.  
 This shows that the number of edges is not an invariant of graded Leavitt path algebras. 
\end{remark}

\subsection{From forests to trunks}
\begin{lemma}
Let $E$ be an $N$-forest graph. Then the maximal possible dimension of its Leavitt path algebra $L_k(E)$ is achieved for a graph with one sink.
\end{lemma}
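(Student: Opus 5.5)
Since $E$ is finite and acyclic, the theorem \cite[Proposition 3.5]{aapsm07} gives $\dim L_k(E)=\sum_{i=1}^n p_i^2$, where $v_1,\dots,v_n$ are the sinks and $p_i$ is the number of paths ending at $v_i$ (the vertex $v_i$ itself counts as the path of length $0$). It therefore suffices to show that whenever $E$ has $n\ge 2$ sinks, there is an $N$-forest graph $E'$ with fewer sinks and $\sum p_i'^2\ge\sum p_i^2$. Iterating this reduces the number of sinks to one. Along the way the vertex set stays $s(E^1)\cup t(E^1)$ and acyclicity is preserved, so $E'$ is again an $N$-forest graph.

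The move I would use is \emph{sink merging}. Take two sinks $v_1,v_2$ and let $E'$ be the graph obtained by identifying them into a single vertex $v$, keeping the edges and the maps $s,t$ unchanged except that targets equal to $v_1$ or $v_2$ now become $v$. Since $v_1,v_2$ emit no edges, no cycle can pass through $v$. So $E'$ is acyclic and has the same $N$ edges. No edge is deleted, so every vertex of $E'$ is still a source or target of some edge, and $E'$ is an $N$-forest graph.

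Next I would count paths. A path ending at a vertex other than $v_1,v_2$ never passes through $v_1$ or $v_2$, because sinks can only appear at the end of a path. Hence the counts $p_i$ for the other sinks do not change. For the merged sink, the paths ending at $v$ are the vertex $v$ together with the positive-length paths that ended at $v_1$ or at $v_2$, so
\[\nonumber
p_v=(p_1-1)+(p_2-1)+1=p_1+p_2-1 .
\]
It remains to check that
\[\nonumber
(p_1+p_2-1)^2-p_1^2-p_2^2=2p_1p_2-2p_1-2p_2+1=2(p_1-1)(p_2-1)-1 .
\]
This is nonnegative as soon as $p_1,p_2\ge 2$. In an $N$-forest graph every sink is the target of at least one edge, so $p_i\ge 2$ for every sink. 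In fact the difference is then at least $1$, which also gives strict improvement.

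The main obstacle is the bookkeeping: one must make sure that a sink is never an intermediate vertex of a path, and that the $N$-forest condition $E^0=s(E^1)\cup t(E^1)$ excludes isolated sinks with $p_i=1$. Without that condition the inequality could fail. With these verified, a finite induction on the number of sinks finishes the proof. Merging may connect different components, but the statement concerns $N$-forest graphs, so connectivity is not required.
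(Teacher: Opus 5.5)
Your proposal is correct and follows essentially the same route as the paper: identify sinks, observe that only the trivial paths at the merged sinks are lost, and use $p_i\ge 2$ (guaranteed by $E^0=s(E^1)\cup t(E^1)$) to show that $\sum p_i^2$ does not decrease. The only difference is that you merge sinks two at a time and induct, which gives the clean identity $(p_1+p_2-1)^2-p_1^2-p_2^2=2(p_1-1)(p_2-1)-1\ge 1$; the paper instead identifies all $n$ sinks at once and expands $\bigl(\sum_k p_k-(n-1)\bigr)^2$ directly, with somewhat garbled bookkeeping of the cross terms.
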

\begin{proof}
For an N forest graph, let $p_k$ be the number of paths ending at a sink $v_k$, where $1 \leq k \leq n$. By Proposition 3.5 of \cite{aapsm07}, we know that the dimension of the associated Leavitt path algebra is $\sum_{k=1}^n p_k^2$. This is maximized when we identify all of the sinks; While we loose $n-1$ paths of length $0$ leading to the previously unique sinks, notice that 
\begin{align*}
 \left(\sum_{k=1}^n p_k -(n-1) \right)^2 &= \sum_{k=1}^n p_k^2 +(n-1)^2+\sum_{1 \leq i\neq k \leq n}2p_kp_i - \sum_{k=1}^n 2(n-1)p_k \\
 & = \sum_{k=1}^n p_k^2+(n-1)^2  +\sum_{k=1}^n \sum_{1 \leq i \neq k \leq n } 2p_k(p_i -n+1) \\ &= \sum_{k=1}^n p_k^2+(n-1)^2+\sum_{k=1}^n 2p_k(\sum_{1 \leq i \neq k \leq n } p_i -n+1)  \geq  \sum_{k=1}^n p_k^2
\end{align*} as $p_k \geq 2$ for  $1 \leq k \leq n$.
\end{proof}
Let $E$ be a connected acyclic graph with $N>0$ edges and one sink. We denote the set of all connected acyclic graph with $N>0$ edges and single sink by~$\mathcal{E}_s^N$, and we write $P_s^N\!(E)$ for the number of all paths in~$E$ ending in the sink.
Note that $\mathcal{T}^N \subset \mathcal{E}_s^N$.

\begin{lemma}\label{lem4.0}
The following equality holds:
$$
\sup_{E\in\mathcal{E}_s^N}P_s^N\!(E)=\sup_{E\in\mathcal{T}^N}P_s^N\!(E).
$$
\end{lemma}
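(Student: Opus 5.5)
Since $\mathcal{T}^N\subset\mathcal{E}^N_s$, the inequality $\ge$ is automatic. For $\le$ I will show that every $E\in\mathcal{E}^N_s$ can be transformed by a finite sequence of moves into a trunk graph, with no move decreasing $P^N_s$. The moves will be the single-sink counterparts of those in Lemma~\ref{lem0}. I will use one quantity throughout. For an edge $e$, let $L(e)$ be the number of paths ending at $s(e)$ and $R(e)$ the number of paths from $t(e)$ to the sink $z$. Then $e$ lies on exactly $L(e)R(e)$ of the paths counted by $P^N_s$. Since $E$ is finite, acyclic and connected with a unique sink, every vertex has a path to $z$, so $R(e)\ge1$.

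The first step is to split sources. If a source $u$ emits several edges, replacing $u$ by one new source $u_e$ for each $e\in s^{-1}(u)$ keeps the unique sink and connectivity. It adds vertices and leaves the set of positive-length paths ending at $z$ unchanged, so $P^N_s$ increases. I will not split the sink: that would break the one-sink hypothesis. This is the main difference from Lemma~\ref{lem0}.

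The second step removes branching, using a single greedy move. Fix a longest path $\gamma=(g_1,\dots,g_\ell)$ in $E$; it necessarily ends at $z$ and starts at a source. Suppose some edge $f$ does not lie on $\gamma$. I will use the two combing moves from Lemma~\ref{lem0}.
\begin{enumerate}
\item If $s(f)$ is a source, re-attach $f$ so that its target becomes $s(g_1)$, the first vertex of $\gamma$.
\item Otherwise, redirect $f$ so that it becomes a parallel copy of an edge $g_j$ of $\gamma$.
\end{enumerate}
The count of paths through $f$ then changes from $L(f)R(f)$ to $L(g_j)R(g_j)$ in the second case, or to a quantity at least $R(g_1)\ge R(f)$ in the first. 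For the second case, the key point is to choose $j$ maximising $L(g_j)R(g_j)$ among the edges of $\gamma$, and to show that $L(f)R(f)$ is at most this maximum. I plan to argue this by injecting paths through $f$ into paths through a suitable $g_j$, pushing along $\gamma$. The injection uses the maximality of $\gamma$, exactly as the alternative-path resolution in Lemma~\ref{lem0} injects paths of $v$ into paths of $e$.

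The main obstacle is acyclicity and bookkeeping. The injection is not obviously valid when $f$ lies on a path that re-enters $\gamma$. There I would first apply the ``resolving alternative paths'' move, merging the parallel routes into $\gamma$; this only adds paths ending at $z$ and keeps a single sink. I expect each move to reduce the number of edges off $\gamma$, or the number of vertices not on $\gamma$, so the procedure terminates. It ends at a graph all of whose edges run between consecutive vertices of $\gamma$, which is a trunk graph. Removing isolated vertices along the way only helps, because such vertices contribute nothing to $P^N_s$ in $\mathcal{E}^N_s$ apart from the sink itself. This gives $\le$ and completes the argument.
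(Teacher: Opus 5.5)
\textbf{There is a genuine gap in your second step.} You need $L(f)R(f)\le\max_j L(g_j)R(g_j)$ in case (2) and $R(t(f))\le R(s(g_1))$ in case (1). Neither follows from $\gamma$ being a longest path, and both fail in general, because length gives no control over path counts.

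Here is a counterexample. Let $\gamma$ be $a_0\to a_1\to a_2\to a_3\to z$ with single edges. Add a branch with $m$ parallel edges from $b_0$ to $b_1$, $m$ parallel edges from $b_1$ to $b_2$, and one edge from $b_2$ to $z$. This graph lies in $\mathcal{E}^{2m+5}_s$, and $\gamma$ is its unique longest path, since the branch has length $3$.
\begin{itemize}
\item Every edge of $\gamma$ has $L(g_j)R(g_j)=j\le 4$.
\item Every edge $f$ from $b_1$ to $b_2$ has $L(f)R(f)=m+1$.
\item After splitting $b_0$, every edge $f$ out of a copy of $b_0$ has $R(t(f))=m$, while $R(s(g_1))=1$.
\end{itemize}
So for $m\ge 4$ both of your moves strictly decrease $P^N_s$, and no injection ``pushing along $\gamma$'' can exist. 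Your fallback of resolving alternative paths does not apply either: no path leaves and re-enters $\gamma$, since the branch meets $\gamma$ only at $z$.

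Case (2) has two further problems.
\begin{itemize}
\item The gain must be computed in the graph with $f$ deleted. This is strictly smaller than $L(g_j)R(g_j)$ whenever some path passes through both $f$ and $g_j$.
\item Detaching $f$ takes you out of $\mathcal{E}^N_s$ whenever $f$ is the only edge leaving a non-source vertex, because a second sink appears.
\end{itemize}
A minor point: splitting sources leaves $P^N_s$ unchanged rather than increasing it, since the new vertices are not the sink. This is harmless.

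\textbf{The missing idea is what makes the paper's combing move work.} It only re-targets the last edge $v_k$ of a stray path whose target already lies on the chosen path, at $t(e_i)$, and it moves that target upstream to $s(e_1)$. Every old path through $v_k$ is then recovered by inserting $e_1,\dots,e_i$ after $v_k$, which gives an explicit injection on paths ending at the sink. Relocating an edge to an arbitrary spot on $\gamma$, as your moves do, destroys this.

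You can also bypass moves entirely. Write $R_E(v)$ for the number of paths from $v$ to $z$, and order $E^0$ topologically as $v_1,\dots,v_n$. Uniqueness of the sink forces $v_n=z$ and $d_i:=|s^{-1}(v_i)|\ge 1$ for $i<n$. Hence the trunk $T=(d_1,\dots,d_{n-1})$ lies in $\mathcal{T}^N$. Downward induction on $i$ gives
\[
R_E(v_i)\le d_i\max_{j>i}R_E(v_j)\le d_i\prod_{k=i+1}^{n-1}d_k=R_T(v_i),
\]
using that $\prod_{k=j}^{n-1}d_k$ is non-increasing in $j$. Summing over $i$ yields $P^N_s(E)\le P^N_s(T)$.
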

\begin{proof}
Let $E\in\mathcal{E}_s^N$. Our proof consists of the following three constructions, all of which can only enlarge the value $P_s^N(E)$:
\begin{enumerate}
	\item \emph{Splitting sources}. For any source $v\in E^0$ such that $|s_E^{-1}(v)|\ge2$, consider the graph $F=(F^0,F^1,s_F,t_F)$, whose set of vertices is
	\begin{equation*}
		F^0 := \left(E^0\setminus\{v\}\right)\sqcup \{v_e:e\in s_E^{-1}(v)\}.
	\end{equation*}
	Both the set of edges $F^1:=E^1$ and the target map $t_F:=t_E$ we set to be the same. The source map $s_F$ is defined as follows
	\begin{equation*}
		s_F(e) = \begin{cases}
			v_e & \text{ if }e\in s_E^{-1}(v),\\
			s_E(e) & \text{ otherwise.}
		\end{cases}
	\end{equation*}
	Clearly $F\in\mathcal{E}^N$. Note that $P_s^N(F)= P_s^N(E)$ as $F$ preserves the amount of paths of positive length. We say that the graph $F$ came from the graph $E$ by splitting of the source $v$. 
	\item \emph{Combing stray paths}. Suppose that inside $E$ we have two paths $e=(e_1,\dots,e_n)$ and $v=(v_1,\dots,v_k)$ such that there exists $i\in\{1,\dots,n\}$ for which we have $t(v_k)=t(e_i)$ and $s_E(v_j)$ is never a vertex that $e$ runs through. We can define a new graph $G=(G^0,G^1,s_G,t_G)$, where the set of vertices, the set of edges and the source map stays the same $G^0:=E^0, G^1:=E^1, s_G:=s_E$. The target map $t_G$ differs from $t_E$ only on $v_k$: 
	\begin{equation*}
		t_G(e) = \begin{cases}
			s_E(e_1) & \text{ if }e=v_k,\\
			t_E(e) & \text{ otherwise}.
		\end{cases}
	\end{equation*} 
	Clearly $G$ is connected has the same amount of edges as $E$ and our assumption of all $s_E(v_j)$ never being a vertex that $e$ runs through ensures that $G$ remains acyclic. Thus $G\in\mathcal{E}^N$ and note that $P_s^N(G)= P_s^N(E)$ as we can define an bijection (restricted to paths that end at the sink) from  $FP(E)\to FP(G)$ that sends any path $w$ ending at the sink that does not contain $v_k$ to itself -- note that our graphs are completely the same if we cut out this particular edge, so this is a well-defined mapping. If $w$ contains $v_k$, say $w=(w_1,\dots,w_m)$ with $v_k=w_j$ for some $j\in\{1,\dots,m\}$, then we shall send $w$ to the following path
	\begin{equation*}
		w'=(w_1,\dots,w_j,e_1,\dots,e_i,w_{j+1},\dots,w_m).
	\end{equation*}
	In other words, we put $v_k,e_1,\dots,e_i$ instead of $v_k$ in $w$. This map is bijective as we only consider paths that end at the sink. Similarly as in the first procedure we can dualize this construction to paths that start on the fixed path $(e_1,\dots,e_n)$. Of course this dual construction also preserves the total amount of paths.
	\item \emph{Resolving alternative paths}. Suppose that we two paths $e=(e_1,\dots,e_n),v=(v_1,\dots,v_m)$, with $n\ge m>1$, such that $s_E(e_1)=s_E(v_1)$ and $t_E(e_n)=t_E(v_m)$, and there is no path connecting vertices that $e$ and $v$ run through. We want to merge vertices that $v$ runs through with vertices that $e$ runs through -- this operation reduces the total amount of vertices, but the total amount of paths increases, and thus it increases the muber of paths that end at the sink. Explicitly, we construct a graph $H=(H^0,H^1,s_H,t_H)$ as follows: Set the vertices to be
	\begin{equation*}
		H^0 := E^0 \setminus \{ t(v_k):k=1,\dots,m-1 \}
	\end{equation*}
	and the edges remain the same $H^1:=E^1$. We change the source and target maps to reflect the process of merging together our desired vertices
	\begin{align*}
		& s_H(e) := \begin{cases}
			s_E(e_i) & \text{ if } e=v_i \text{ for some }i\in\{ 1,\dots,m \},\\
			s_E(e) & \text{otherwise},
		\end{cases}\\
		& t_H(e) := \begin{cases}
			t_E(e_i) & \text{ if } e=v_i \text{ for some }i\in\{ 1,\dots,m \},\\
			t_E(e) & \text{otherwise}.
		\end{cases}
	\end{align*}
	Our final graph $H$ clearly is connected and has the same amount of edges that $E$ does and our assumption of having no paths connecting that $v$ and $e$ run through we get that $H$ must still be acyclic. Thus $H\in\mathcal{E}^N$. In order to see that $P_s^N(H)>P_s^N(E)$ observe that any path of positive length $w$ to the sink in $E$ is still a valid path in $H$, but we get more -- in our new graph $H$ we can mix the $v_i$'s with $e_j$'s, hence we get at least $2^m-2$ new paths.
\end{enumerate}
Having all of these moves defined we modify our graph $E$ in the following way. First, we split all sources and let $F$ be the final product of all of these 
procedures. The graph $F$ will be connected as it has a unique sink. Next, we can identify all instances of a path splitting into multiple paths and merges together later on. Suppose that we have two paths $e=(e_1,\dots,e_n)$ and $v=(v_1,\dots,v_m)$. It might be the case that there exists a path connecting two vertices that $v$ and $e$ runs through, say $w$ is a path such that $s(w)=s_E(v_i)$ and $t(w)=t_E(e_j)$ for some $i=1,\dots,m-1$ and $j=2,m$. Therefore we can first consider the splitting of paths that certainly occurs at $s_E(v_i)$, since $(w,e_{j+1},\dots,e_n)$ (or in the case $j=m$, just $w$ by itself) and $(v_i,\dots,v_m)$ are two paths with the same source and target. Note that if we resolve the alternative routes which are $(w,e_{j+1},\dots,e_n)$ and $(v_i,\dots,v_m)$, then the paths $(v_1,\dots,v_{i-1})$ and $e_1,\dots,e_j$ form a new splitting of paths, provided that $i>1$, which we can take care of afterwards. We can therefore iteratively resolve all alternative path obtaining a new graph that we shall denote by $G$. The graph $G$ is now just a collection of paths that cross each other in a way that is acyclic. We can therefore pick any path in $G$ and comb all stray paths that are either going into and outgoing from our chosen graph. Thus the end product is a graph that lies in $\mathcal{T}^N$ and since all of our procedures only increase the total amount of paths, then our final product also has a larger total amount of paths. Thus for any graph $E_s\in\mathcal{E}^N$ we can find a better graph that lies in $\mathcal{T}^N$, which finishes the proof.
\end{proof}

\newpage
\subsection{Experimental data} 
Here we again use Python to check the first 25 cases.
\phantom{.}\\

\begin{center}
\begin{tabular}{lllll}
\rowcolor{Gray}
$N$ edges         & Maximal number of paths   & Optimal trunks  &  Length  of longest path \\
\rowcolor{LGray}
3&5&(1,2)&2\\

4&7&(1,1,2),(1,3),(2,2)&2 or 3\\ \hline \hline
\rowcolor{Gray}
5&11&(1,2,2)&3\\ 
\rowcolor{LGray}
6&16&(1,2,3)&3\\

7&23&(1,2,2,2)&4\\
\rowcolor{Gray}
8&34&(1,2,2,3)&4\\
\rowcolor{LGray}
9&49&(1,2,3,3)& 4\\

10&70&(1,2,2,2,3)&5\\
\rowcolor{Gray}
11&103&(1,2,2,3,3)&5\\
\rowcolor{LGray}
12&148&(1,2,3,3,3)&5\\

13&211&(1,2,2,2,3,3)&6\\
\rowcolor{Gray}
14&310&(1,2,2,3,3,3)&6\\
\rowcolor{LGray} 
15&445&(1,2,3,3,3,3)&6 \\ 
16&634&(1,2,2,2,3,3,3)&7\\
\rowcolor{Gray}
17&931&(1,2,2,3,3,3,3)&7\\
\rowcolor{LGray}
18&1336&(1,2,3,3,3,3,3)&7\\

19&1903&(1,2,2,2,3,3,3,3)&8\\
\rowcolor{Gray}
20&2794&(1,2,2,3,3,3,3,3)&8\\
\rowcolor{LGray}
21&4009&(1,2,3,3,3,3,3,3)&8\\

22&5710&(1,2,2,2,3,3,3,3,3)&9\\
\rowcolor{Gray}
23&8383&(1,2,2,3,3,3,3,3,3)&9\\
\rowcolor{LGray}
24&12028&(1,2,3,3,3,3,3,3,3)&9\\
25&17131&(1,2,2,2,3,3,3,3,3,3)&10\\

\end{tabular}
\end{center}

\subsection{Main result}
\begin{lemma}\label{lem4.05}
Let $E\in\mathcal{T}^N$ with vertixes $\{v_1,\dots ,v_n\}$, then
\begin{equation}
P_s^N(E)=\sum\limits_{k=1}^{n-1} \prod\limits_{m=k}^{n-1} \vert s^{-1}(v_m) \vert+1
\end{equation}
\end{lemma}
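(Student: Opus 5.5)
We count paths in a trunk graph $E$ with vertices $v_1,\dots,v_n$ ending at the unique sink $v_n$ by sorting them according to their starting vertex. In a trunk graph every edge goes from some $v_i$ to $v_{i+1}$. So a path starting at $v_k$ and ending at $v_n$ must pass through $v_k,v_{k+1},\dots,v_n$ in order. At each step it chooses one of the $|s^{-1}(v_m)|$ edges from $v_m$ to $v_{m+1}$, and these choices are independent. Hence for $1\le k\le n-1$ the number of paths from $v_k$ to $v_n$ is exactly $\prod_{m=k}^{n-1}|s^{-1}(v_m)|$.

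The plan is as follows. First, note that $v_n$ is the unique sink: every other vertex $v_i$ with $i\le n-1$ emits at least one edge, because the graph is connected and all edges join consecutive vertices. Next, partition the set of paths ending at $v_n$ by their source $v_k$, $k=1,\dots,n$. For $k=n$ the only such path is the trivial path of length $0$ at $v_n$, which contributes the summand $1$. For $k\le n-1$ the path has length $n-k$, and it is determined by the tuple $(e_k,\dots,e_{n-1})$ with $e_m\in s^{-1}(v_m)$. Every such tuple is a valid path, since $t(e_m)=v_{m+1}=s(e_{m+1})$, and distinct tuples give distinct paths. This gives the product formula. Summing over $k$ yields the stated identity.

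The only point requiring care, and the main obstacle such as it is, is to justify that a path from $v_k$ cannot skip vertices or go backwards. This follows directly from the defining property of trunk graphs, $s(e)=v_i\Rightarrow t(e)=v_{i+1}$: by induction on the length of the path, the $j$-th edge of a path starting at $v_k$ has source $v_{k+j-1}$. Consequently a path from $v_k$ reaches $v_n$ exactly when its length is $n-k$.

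As an alternative check, one can argue by induction on $n$ using the recursion from the notation preceding Lemma~\ref{lem4}. Writing $R$ for the number of paths from $v_2$ to $v_n$ in the subtrunk on $v_2,\dots,v_n$, we get $P_s^N(E)=P_s(\text{subtrunk})+|s^{-1}(v_1)|\,R$. Expanding this recursion reproduces the sum of products.
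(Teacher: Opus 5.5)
Your proof is correct and follows the same approach as the paper: you partition the paths ending at $v_n$ by their source, count $\prod_{m=k}^{n-1}|s^{-1}(v_m)|$ paths from each $v_k$ with $k<n$, and add $1$ for the trivial path at $v_n$. You also justify two points the paper takes for granted: that paths cannot skip vertices, and that $v_n$ is the unique sink because connectedness forces every $|s^{-1}(v_i)|\geq 1$ for $i\le n-1$.
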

\begin{proof}
Since for $E$ there is only one sink $v_n$ the number $P_s^N(E)$ is equal to the number of paths ending in vertex $v_n$.
The number of paths starting at $v_k$ where $k<n$ and ending in $v_n$ is equal to $\prod\limits_{m=k}^{n-1} \vert s^{-1}(v_m) \vert$.
This together with only path beggining and ending in $v_n$ gives the stated formula.
\end{proof}
\begin{lemma}\label{lem4.1}
The following equality holds:
$$
\sup_{E\in\mathcal{T}^N}P_s^N\!(E)=\sup_{E\in\mathcal{T}^N_3}P_s^N\!(E).
$$
\end{lemma}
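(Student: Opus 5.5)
Since $\mathcal{T}^N_3\subset\mathcal{T}^N$, the inequality $\geq$ is trivial. For the reverse inequality we show that any trunk $E\in\mathcal{T}^N$ of width at least $4$ can be replaced by a trunk in $\mathcal{T}^N$ with strictly smaller maximal edge multiplicity and no smaller value of $P_s^N$. Iterating this replacement terminates, because each step either lowers the number of edge bundles of size at least $4$ or splits them into smaller ones. The end product lies in $\mathcal{T}^N_3$.

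The replacement is the same blow-up as in the proof of Lemma~\ref{lem1}. Suppose $|s^{-1}(v_j)|=n\geq 4$ for some $j$. Insert a new vertex $\epsilon$ between $v_j$ and $v_{j+1}$. Put $\lfloor n/2\rfloor$ edges from $v_j$ to $\epsilon$ and $\lceil n/2\rceil$ edges from $\epsilon$ to $v_{j+1}$. The result $F$ is again a trunk with $N$ edges and one more vertex, and its sink is still the last vertex.

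To compare, we use the formula of Lemma~\ref{lem4.05}. Write $m_i=|s^{-1}(v_i)|$ and let
$$R:=\prod_{m=j+1}^{n-1} m_m\geq 1$$
be the number of paths from $v_{j+1}$ to the sink. In $E$, the terms with $k>j$ are unchanged in $F$. The terms with $k\le j$ each contain the factor $n$, so their total is $nR\cdot A$ with
$$A=\sum_{k=1}^{j}\prod_{m=k}^{j-1}m_m\geq 1.$$
In $F$, the same starting vertices now contribute
$$\lfloor n/2\rfloor\lceil n/2\rceil R\cdot A.$$
The new vertex $\epsilon$ contributes an additional $\lceil n/2\rceil R$, and the constant $+1$ is unchanged. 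Hence
$$P_s^N(F)-P_s^N(E)=\bigl(\lfloor n/2\rfloor\lceil n/2\rceil-n\bigr)RA+\lceil n/2\rceil R.$$
For $n\geq4$ we have $\lfloor n/2\rfloor\lceil n/2\rceil\geq n$: equality holds at $n=4$, and the inequality is strict for $n\geq5$. So the difference is positive, since the extra term $\lceil n/2\rceil R$ is always positive. Notice that the two new labels are at most $\lceil n/2\rceil<n$, which makes the iteration terminate.

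The only point requiring care is the bookkeeping: the blow-up does not change the single-sink structure, and the count of Lemma~\ref{lem4.05} splits cleanly into terms before and after $v_j$. Unlike in Lemma~\ref{lem1}, we cannot rely on an $L\cdot R$ factorization of all paths. Instead, the factor $A$ collects the paths starting at or before $v_j$. This is routine once the product formula is written out, so no real obstacle is expected. A bound of $3$ (rather than $2$) is natural here because $\lfloor 3/2\rfloor\lceil 3/2\rceil=2<3$, so the argument genuinely stops at width $3$.
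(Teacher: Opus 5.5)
Your proof is correct and is essentially the paper's argument. You use the same halving blow-up of a bundle of $n\geq 4$ edges, and your $A$ is exactly the paper's $L$ (the number of paths ending at $v_j$), giving the change $\bigl(\lfloor n/2\rfloor\lceil n/2\rceil-n\bigr)AR+\lceil n/2\rceil R>0$ in the number of paths to the sink, with your termination remark (both new labels are strictly smaller than $n$) making explicit the iteration the paper leaves implicit.
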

\begin{proof}
Let $E\in\mathcal{T}^N$ and suppose that we have an edge $e\in E^1$ which has a label $n>3$ -- here we consider $E$ as a simple and labeled graph with labels coming from $\mathbb{N}$. We can blow such an edge up into two edges $e_1,e_2$ such that the label of $e^1$ is $\lfloor n/2\rfloor$ and the label of $e^2$ is $\lceil n/2 \rceil$. Explicitly, we define a graph $F=(F^0,F^1,s_F,t_F)$ as follows: Let the set of vertices and edges to be
\begin{align*}
	F^0 := E^0\sqcup\{\epsilon\}, && F^1 = \left(E^1\setminus \{e\}\right)\sqcup \{e^1,e^2\}.
\end{align*}
Next, we need to define the source and target maps:
\begin{align*}
	& s_F(w) := \begin{cases}
		s_E(e) & \text{ if }w=e^1,\\
		\epsilon & \text{ if }w=e^2,\\
		s_E(w) & \text{ otherwise}.
	\end{cases}\\
	& t_F(w) := \begin{cases}
		\epsilon & \text{ if }w=e^1,\\
		t_E(e) & \text{ if }w=e^2,\\
		t_E(w) & \text{ otherwise}.
	\end{cases}\
\end{align*}
Lastly, we need the labeling $l_F:F^1\to\mathbb{N}$ and it is defined as follows
\begin{equation*}
	l_F(w) := \begin{cases}
		\left\lfloor \frac{n}{2}\right\rfloor &\text{ if }w=e^1,\\
		\left\lceil \frac{n}{2} \right\rceil & \text{ if }w=e^2,\\
		l_E(w) &\text{ otherwise}.
	\end{cases}
\end{equation*}
Clearly any path in $E$ that does not contain $e$ corresponds to a unique path in $F$ that does not contain either $e^1$ or $e^2$. Let $L$ be the amount of paths 
in $E$ that end at $s_E(e)$ and $R_s$ be the amount of paths in $E$ that start at $t_E(e)$ that end at the sink. Then the total amount of paths that contain $e$ ending at the sink is 
\begin{equation*}
	T_E = L\cdot n\cdot R_s.
\end{equation*}
On the other hand, the total amount of paths in $F$ that end at the sink that contain either $e^1$ or $e^2$ ending at the sink is
\begin{equation*}
	T_F = L\cdot \left\lfloor \frac{n}{2}\right\rfloor\left\lceil \frac{n}{2} \right\rceil \cdot R_s + \left\lceil \frac{n}{2} \right\rceil \cdot R_s.
\end{equation*}
As $n\ge4$, it is clear that the first term of $T_F$ already overpowers $T_E$, thus our construction can only increase the total number of paths. Iterating this for all 
edges with label at least $4$ we get that for any graph $E\in\mathcal{T}^N$ we can find a better candidate that lies in $\mathcal{T}_3^N$, which finishes the proof.
\end{proof}

\begin{lemma}\label{lem4.1}
Let $F\in \mathcal{T}^N_3$. If $\sup_{E\in\mathcal{T}^N_3}P_s^N\!(E)=P_s^N\!(F)$, then then
\begin{displaymath}
\forall_{1\le i<j\le n-1} \ \vert s^{-1}(v_i) \vert \le \vert s^{-1}(v_j) \vert
\end{displaymath}
where $F^0=\{v_1,\dots,v_n\}$.
\end{lemma}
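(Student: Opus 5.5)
Write $a_m:=|s^{-1}(v_m)|$ for $1\le m\le n-1$. By Lemma~\ref{lem4.05},
\[
P_s^N(F)=1+\sum_{k=1}^{n-1}\prod_{m=k}^{n-1}a_m,
\]
so the quantity to maximise is a sum of tail products of the sequence $(a_1,\dots,a_{n-1})$. The weight $N=\sum a_m$ and the length $n-1$ depend only on the multiset of the $a_m$'s. Hence any rearrangement of the labels is again in $\mathcal{T}^N_3$, and it suffices to show that an out-of-order adjacent pair can be swapped with a strict gain. Once no adjacent pair satisfies $a_i>a_{i+1}$, transitivity gives the statement for all $i<j$.

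\textbf{Swap step.} Suppose $a_i=x>y=a_{i+1}$ for some $i\le n-2$, and let $G$ be the trunk obtained by exchanging these two labels. Split the tail products of $F$ according to $k$:
\begin{itemize}
\item for $k\ge i+2$, the products do not involve positions $i$ or $i+1$, so they are unchanged;
\item for $k\le i$, the product contains both $x$ and $y$, so it is unchanged;
\item for $k=i+1$, the only differing term, the product equals $yR$ in $F$ and $xR$ in $G$, where $R:=\prod_{m=i+2}^{n-1}a_m\ge1$ (the empty product being $1$ when $i=n-2$).
\end{itemize}
Therefore
\[
P_s^N(G)-P_s^N(F)=(x-y)R>0,
\]
which contradicts the optimality of $F$. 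This is the analogue of the exchange argument in Lemma~\ref{lem4}, but simpler: only the right-hand factor $R$ enters, because paths ending at the sink are counted by their starting point alone. There is no balancing point analogous to $k$ in Lemma~\ref{lem4}, and monotonicity holds everywhere.

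\textbf{Conclusion.} Optimality of $F$ thus forces $a_i\le a_{i+1}$ for every $1\le i\le n-2$, and chaining these inequalities gives $a_i\le a_j$ for all $i<j$. The swap also never leaves $\mathcal{T}^N_3$, since it preserves the multiset of labels and hence the width bound. There is no serious obstacle here. The only point needing care is the bookkeeping of which tail products change, and in particular checking the boundary case $i+1=n-1$, where $R=1$.
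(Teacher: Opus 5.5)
Your proof is correct and is essentially the paper's exchange argument based on the tail-product formula of Lemma~\ref{lem4.05}. The only difference is that you transpose adjacent labels, so that a single tail product changes and the gain is $(x-y)R$, and then you chain the inequalities by transitivity; the paper instead swaps $a_i$ and $a_j$ directly for arbitrary $i<j$ and obtains $P_s^N(F)-P_s^N(G)=(a_j-a_i)\sum_{k=i+1}^{j}\prod_{m=k,\,m\neq j}^{n-1}a_m$, which yields all pairs at once.
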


\begin{proof}
Let us fix $i, j \in \{1,\dots ,n-1\}$ where $i <j $.  The number of paths that end at the sink $v_n$ is:
\[P_s^N(F)=\sum_{k=1}^{n-1} \prod_{m=k}^{n-1} |s^{-1}(v_m)| +1. \]
which is:

\begin{displaymath}
\begin{split}
P_s^N(F)&=
\sum_{k=1}^{i} \prod_{m=k}^{n-1} |s^{-1}(v_m)| +  \sum_{k=i+1}^{j} \prod_{m=k}^{n-1} |s^{-1}(v_m)| +\sum_{k=j+1}^{n-1} \prod_{m=k}^{n-1} |s^{-1}(v_m)| +1\\
&=\vert s^{-1}(v_i)\vert \vert s^{-1}(v_j) \vert \sum_{k=1}^{i} \prod_{\substack{m=k \\ m\not=i \\ m\not =j}}^{n-1} |s^{-1}(v_m)| +\vert s^{-1}(v_j)\vert \sum_{k=i+1}^{j} \prod_{\substack{m=k \\ m\not =j}}^{n-1} |s^{-1}(v_m)|+\sum_{k=j+1}^{n-1} \prod_{m=k}^{n-1} |s^{-1}(v_m)| +1
\end{split}
\end{displaymath}

Let us look at the trunk where we switch $v_i$ and $v_j$. 
Thus, we go from 
\[F=(|s^{-1}(v_1)|, |s^{-1}(v_2)|, \ldots , |s^{-1}(v_i)|, \ldots |s^{-1}(v_j)|, \ldots, |s^{-1}(v_n)|)\] 
to 
\[G=(|s^{-1}(v_1)|, |s^{-1}(v_2)|, \ldots , |s^{-1}(v_j)|, \ldots |s^{-1}(v_i)|, \ldots, |s^{-1}(v_n)|).\]
Let us denote the vertixes of $G$ by $\{w_1,\dots ,w_n\}$ and calculate:
\begin{displaymath}
\begin{split}
P_s^N(F)-P_s^N(G)&=
\left( \vert s^{-1}(v_i)\vert \vert s^{-1}(v_j) \vert \sum_{k=1}^{i} \prod_{\substack{m=k \\ m\not=i \\ m\not =j}}^{n-1} |s^{-1}(v_m)|  -\vert s^{-1}(w_i)\vert \vert s^{-1}(w_j) \vert \sum_{k=1}^{i} \prod_{\substack{m=k \\ m\not=i \\ m\not =j}}^{n-1} |s^{-1}(w_m)| \right) \\ 
&+\left(\vert s^{-1}(v_j)\vert \sum_{k=i+1}^{j} \prod_{\substack{m=k \\ m\not =j}}^{n-1} |s^{-1}(v_m)| -\vert s^{-1}(w_j)\vert \sum_{k=i+1}^{j} \prod_{\substack{m=k \\ m\not =j}}^{n-1} |s^{-1}(w_m)| \right) \\
&+\left( \sum_{k=j+1}^{n-1} \prod_{m=k}^{n-1} |s^{-1}(v_m)| -\sum_{k=j+1}^{n-1} \prod_{m=k}^{n-1} |s^{-1}(w_m)|  \right) \\
&=\vert s^{-1}(v_j)\vert \sum_{k=i+1}^{j} \prod_{\substack{m=k \\ m\not =j}}^{n-1} |s^{-1}(v_m)| -\vert s^{-1}(v_i)\vert \sum_{k=i+1}^{j} \prod_{\substack{m=k \\ m\not =j}}^{n-1} |s^{-1}(v_m)|\\
&=\left( \vert s^{-1}(v_j)\vert-\vert s^{-1}(v_i)\vert \right) \sum_{k=i+1}^{j} \prod_{\substack{m=k \\ m\not =j}}^{n-1} |s^{-1}(v_m)| \ge 0
\end{split}
\end{displaymath}

The last inequality follows from $F$ being optimal.
Since $\prod\limits_{\substack{m=k \\ m\not =j}}^{n-1} |s^{-1}(v_m)|$ is always positive the inequality $\vert s^{-1}(v_j)\vert-\vert s^{-1}(v_i)\vert \ge 0$ must hold.
\end{proof}

\begin{corollary}\label{edgeseqsinks}
	All lemmas that we have proven in this case where our graph has a unique sink yield a very particular form of the sequence of edges $l_s:=(|s_F(v_i)^{-1}|)_{i=1}^{n-1}$ in an optimal graph. Namely, we know that all entries are either $1$, $2$ or $3$ by Lemma~\ref{lem4.1}. sequence is non-decreasing as read from left to right, and mad up of numbers $1,2,$ or $3$ So, in fact, $l_s$ is a sequence of the form
	\begin{equation*}
		l_s = (1,\dots,1,2,\dots,2,3,\dots,3).
	\end{equation*}
	Say that there are $n_1$ copies of $1$'s at the beginning, followed by $n_2$ $2$'s, and $n_3$ $3$'s. We will use the following shorthand for the sequence $l_s$:
	\begin{equation*}
		l_s =: (n_1,n_2,n_3)
	\end{equation*}
\end{corollary}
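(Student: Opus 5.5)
The corollary follows by assembling Lemma~\ref{lem4.0}, the width-reduction lemma in the single-sink setting, and the monotonicity lemma. We only need to check that the hypotheses chain together.

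First, by Lemma~\ref{lem4.0} the supremum of $P_s^N$ over $\mathcal{E}_s^N$ is attained on a trunk. Trunks with $N$ edges form a finite set, of size $2^{N-1}$ by Proposition~\ref{22n}, so a maximizer $F\in\mathcal{T}^N$ exists.

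Next, apply the blow-up argument (the first lemma labelled \texttt{lem4.1}). Splitting any bundle of $n\ge 4$ parallel edges into bundles of sizes $\lfloor n/2\rfloor$ and $\lceil n/2\rceil$ does not decrease $P_s^N$: with $L\ge 1$ and $R_s\ge 1$ we have $L\lfloor n/2\rfloor\lceil n/2\rceil R_s\ge LnR_s$, because $\lfloor n/2\rfloor\lceil n/2\rceil\ge n$ for $n\ge 4$. Iterating, we may therefore take the maximizer in $\mathcal{T}^N_3$, so every entry of $l_s$ lies in $\{1,2,3\}$. It is enough to state this as ``some optimal graph'' has the property, which is all the corollary needs.

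For such an optimal $F$, the second lemma (the swapping argument) shows that $|s^{-1}(v_i)|\le|s^{-1}(v_j)|$ for all $i<j$. The swap of two entries preserves $N$, so the comparison happens inside $\mathcal{T}^N_3$. The difference computed there is
\[
\bigl(|s^{-1}(v_j)|-|s^{-1}(v_i)|\bigr)\sum_{k=i+1}^{j}\prod_{m\ne j}|s^{-1}(v_m)|,
\]
and the sum is strictly positive. Optimality then forces the sequence to be non-decreasing.

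A non-decreasing sequence with values in $\{1,2,3\}$ is a block of $1$'s, then a block of $2$'s, then a block of $3$'s, possibly with empty blocks. This gives exactly $l_s=(n_1,n_2,n_3)$ with $n_1+2n_2+3n_3=N$.

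The one step needing care is the middle one. The blow-up lemma only says that optimizing over $\mathcal{T}^N_3$ gives the same supremum. We have to make sure the maximizer used in the swapping lemma is optimal among all trunks, and not merely within $\mathcal{T}^N_3$. Equality of the suprema settles this: an optimizer in $\mathcal{T}^N_3$ is globally optimal. So the corollary is asserted for such an optimizer, which exists by finiteness.
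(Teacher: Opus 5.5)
Your proposal is correct and follows the paper's route: the corollary is just the blow-up lemma (entries in $\{1,2,3\}$) combined with the swapping lemma (non-decreasing order). Your extra bookkeeping — existence of a maximizer by finiteness, and equality of suprema to pass between optimality in $\mathcal{T}^N_3$ and in $\mathcal{T}^N$ — is what makes that combination rigorous; and since the blow-up gain $\lceil n/2\rceil R_s$ is strictly positive and the swap computation never uses the width bound, you could even conclude that every optimal trunk, not just some, has the form $(n_1,n_2,n_3)$.
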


\begin{lemma}
For $E$ whose associated sequence is $(n_1,n_2,n_3)$ the number of paths to the sink is given by formula
\begin{displaymath}
P_s^N\!(E)=\frac{1}{2}\left( (2^{n_2+2}+n_1 2^{n_2+1}-1)3^{n_3}-1 \right)=(n_1+2)2^{n_2}3^{n_3}-\frac{3^{n_3}+1}{2}
\end{displaymath}
\end{lemma}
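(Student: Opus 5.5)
We compute $P_s^N(E)$ directly from the formula of Lemma~\ref{lem4.05}, which expresses it as $1$ plus the sum, over the starting vertex $v_k$ with $k<n$, of the product of the multiplicities $|s^{-1}(v_m)|$ for $m=k,\dots,n-1$. For the sequence $(n_1,n_2,n_3)$ the multiplicities are $1$ on the first $n_1$ positions, $2$ on the next $n_2$ positions, and $3$ on the last $n_3$ positions. We therefore split the sum according to which block the starting index $k$ lies in.

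\emph{Starting index in the block of $3$'s.} The products are $3^{j}$ for $j=1,\dots,n_3$. Adding the trivial path at the sink gives
\[
\sum_{j=0}^{n_3}3^j=\frac{3^{n_3+1}-1}{2}.
\]

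\emph{Starting index in the block of $2$'s.} The products are $2^{j}3^{n_3}$ for $j=1,\dots,n_2$, which sum to $3^{n_3}(2^{n_2+1}-2)$.

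\emph{Starting index in the block of $1$'s.} Each of the $n_1$ terms equals $2^{n_2}3^{n_3}$, contributing $n_1 2^{n_2}3^{n_3}$.

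Adding the three contributions gives
\[
n_12^{n_2}3^{n_3}+2^{n_2+1}3^{n_3}-2\cdot 3^{n_3}+\frac{3\cdot3^{n_3}-1}{2}
=(n_1+2)2^{n_2}3^{n_3}-\frac{3^{n_3}+1}{2},
\]
which is the second expression in the statement. Multiplying out,
\[
(n_1+2)2^{n_2}3^{n_3}=\tfrac12\,(2^{n_2+2}+n_12^{n_2+1})\,3^{n_3},
\]
so the first expression follows at once.

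The only real point of care is the bookkeeping at the block boundaries: the trivial path at the sink must be counted exactly once, and the formula must remain valid when some $n_i=0$. Both are handled by the convention that empty sums vanish and that the geometric sums above are correct for $n_2=0$ or $n_3=0$. One can confirm this against the table, for instance $(1,2,2)$ gives $11$ and $(1,2,3)$ gives $16$.
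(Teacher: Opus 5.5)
Your proof is correct and is essentially the paper's argument: apply Lemma~\ref{lem4.05}, split the sum according to whether the starting vertex lies in the block of $3$'s, $2$'s or $1$'s, and add the geometric sums. Your bookkeeping is cleaner than the paper's, which has a stray factor $3^{n_3}$ in one intermediate line; note also that your sanity checks use the table's multiplicity notation, so in the block notation $(n_1,n_2,n_3)$ of the statement they are $(1,2,0)$ and $(1,1,1)$.
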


\begin{proof}
\begin{displaymath}
\begin{split}
P_s^N\!(E)
&=\sum\limits_{k=1}^{n-1} \prod\limits_{i=k}^{n-1} \vert s^{-1}(v_i) \vert +1\\
&=1+\sum\limits_{k=1}^{n-1} \prod\limits_{i=1}^k \vert s^{-1}(v_{n-i}) \vert \\
&=1+\sum\limits_{k=1}^{n_3} \prod\limits_{i=1}^k \vert s^{-1}(v_{n-i}) \vert+\sum\limits_{k=n_3+1}^{n_3+n_2} \prod\limits_{i=1}^k \vert s^{-1}(v_{n-i})\vert+\sum\limits_{k=n_3+n_2+1}^{n_3+n_2+n_1}\prod\limits_{i=1}^k \vert s^{-1}(v_{n-i})\vert\\
&=1+\sum\limits_{k=1}^{n_3} 3^k +\sum\limits_{k=1}^{n_2} 3^{n_3}2^k+\sum\limits_{k=1}^{n_1} 3^{n_3}2^{n_2} \\
&=\sum\limits_{k=0}^{n_3} 3^k +2\cdot 3^{n_3}\sum\limits_{k=0}^{n_2-1} 3^{n_3}2^k+\sum\limits_{k=1}^{n_1} 3^{n_3}2^{n_2} \\
&=\frac{3^{n_3+1}-1}{3-1}+2\cdot 3^{n_3}\frac{2^{n_2}-1}{2-1}+n_1\cdot 2^{n_2}\cdot 3^{n_3}\\
&=\frac{1}{2}\left( 3^{n_3+1}+4\cdot 2^{n_2} \cdot 3^{n_3}+2n_1 2^{n_2} 3^{n_3}-4\cdot 3^{n_3}-1 \right) \\
&=\frac{1}{2}\left( \left( 2^{n_2+2}+n_1 2^{n_2+1}-1\right) 3^{n_3} -1\right)
\end{split}
\end{displaymath}
\end{proof}

\begin{lemma}\label{lem4.2}
Let us consider a trunk with associated sequence $(n_1,n_2,n_3)$.
\begin{enumerate}
\item If $n_1 \ge 2$ then transformation $(n_1,n_2,n_3)\mapsto (n_1-2,n_2+1,n_3)$ does not decrease the number of paths to the sink.
\item If $n_2\ge 4$ then transformation $(n_1,n_2,n_3)\mapsto (n_1,n_2-3,n_3+2)$ does not decrease the number of paths to the sink.
\item If $n_1=0$ and $n_2 \ge 2$ then transformation $(n_1,n_2,n_3)\mapsto (n_1+1,n_2-2,n_3+1)$ does not decrease the number of paths to the sink.
\item If $n_1=0$, $n_3 \ge 1$ then transformation $(n_1,n_2,n_3)\mapsto (n_1+1,n_2+1,n_3-1)$ does not decrease the number of paths to the sink.
\end{enumerate}
\end{lemma}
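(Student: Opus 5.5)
The plan is to use the closed formula from the preceding lemma,
\[
P_s^N\!(E)=(n_1+2)\,2^{n_2}3^{n_3}-\frac{3^{n_3}+1}{2},
\]
and, for each of the four transformations, compute the number of paths to the sink after the transformation minus the number before. Each transformation preserves the weight $N=n_1+2n_2+3n_3$: for instance $-2+2=0$ in (1), $-6+6=0$ in (2), $1-4+3=0$ in (3), and $1+2-3=0$ in (4). So we only need to show that each difference is nonnegative. Throughout write $A:=2^{n_2}3^{n_3}$, so that $P_s^N=(n_1+2)A-\frac{3^{n_3}+1}{2}$.

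First I will check that the formula also holds when some of $n_1,n_2,n_3$ vanish, since the transformations produce such sequences. The derivation in the preceding lemma sums over blocks, and empty geometric sums give exactly the boundary values, so the formula remains valid. The four cases then go as follows.

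\begin{enumerate}
\item For $(n_1-2,n_2+1,n_3)$ the new count is $n_1\cdot 2A-\frac{3^{n_3}+1}{2}$. The difference is $(n_1-2)A\ge 0$ whenever $n_1\ge 2$.
\item For $(n_1,n_2-3,n_3+2)$ the first term becomes $\frac98(n_1+2)A$, and the constant term becomes $\frac{3^{n_3+2}+1}{2}$. The difference is
\[
\tfrac18(n_1+2)A-4\cdot 3^{n_3}=3^{n_3}\bigl((n_1+2)2^{n_2-3}-4\bigr).
\]
This is nonnegative because $n_2\ge 4$ and $n_1\ge 0$ give $(n_1+2)2^{n_2-3}\ge 2\cdot 2=4$.
\item With $n_1=0$, the sequence $(1,n_2-2,n_3+1)$ gives $3\cdot 2^{n_2-2}3^{n_3+1}=\frac94A$ as the first term. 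The difference is
\[
\tfrac14A-3^{n_3}=3^{n_3}(2^{n_2-2}-1)\ge 0
\]
for $n_2\ge 2$.
\item With $n_1=0$ and $n_3\ge1$, the sequence $(1,n_2+1,n_3-1)$ has first term $3\cdot 2^{n_2+1}3^{n_3-1}=2A$, the same as before. Only the constant term changes, and the difference is
\[
\tfrac12\bigl(3^{n_3}-3^{n_3-1}\bigr)=3^{n_3-1}>0 .
\]
\end{enumerate}

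The only real obstacle is bookkeeping. One must track the constant term $-\frac{3^{n_3}+1}{2}$ correctly when $n_3$ changes, and confirm that the closed formula remains valid at the degenerate boundary values (for example $n_2-3=0$ in (2), or $n_3-1=0$ in (4)). Once that is settled, every case reduces to the one-line factorizations above.
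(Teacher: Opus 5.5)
Your proposal is correct and follows the paper's proof. You substitute into the closed formula $P_s^N=(n_1+2)2^{n_2}3^{n_3}-\frac{3^{n_3}+1}{2}$ and get the same four factorized differences, $(n_1-2)2^{n_2}3^{n_3}$, $3^{n_3}\bigl((n_1+2)2^{n_2-3}-4\bigr)$, $3^{n_3}(2^{n_2-2}-1)$ and $3^{n_3-1}$; the paper keeps $n_1$ general in (3) and (4) and sets $n_1=0$ only at the end, and it leaves implicit your checks that $n_1+2n_2+3n_3$ is preserved and that the formula holds when a block is empty.
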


\begin{proof}
\begin{enumerate}
\item We calculate the difference of number of paths to the sink:
\begin{equation*}
\begin{split}
&P_s^N\!(n_1-2,n_2+1,n_3)-P_s^N\!(n_1,n_2,n_3) \\
&=n_12^{n_2+1}3^{n_3}-\frac{3^{n_3}+1}{2}-\left( (n_1+2)2^{n_2}3^{n_3}-\frac{3^{n_3}+1}{2} \right) \\
&=(n_1-2)2^{n_2}3^{n_3} \ge 0
\end{split}
\end{equation*}
The last inequality holds because $n_1\ge 2$
\item We calculate the difference of number of paths to the sink:
\begin{equation*}
\begin{split}
&P_s^N\!(n_1,n_2-3,n_3+2)-P_s^N\!(n_1,n_2,n_3) \\
&=(n_1+2)2^{n_2-3}3^{n_3+2}-\frac{3^{n_3+3}+1}{2}-\left( (n_1+2)2^{n_2}3^{n_3}-\frac{3^{n_3}+1}{2} \right) \\
&= \left( (n_1+2)2^{n_2-3}-4 \right) 3^{n_3} \ge 0
\end{split}
\end{equation*}
The last inequality holds because $n_2\ge 4$.
\item We calculate the difference of number of paths to the sink:
\begin{equation*}
\begin{split}
&P_s^N\!(n_1+1,n_2-2,n_3+1)-P_s^N\!(n_1,n_2,n_3) \\
&=(n_1+3)2^{n_2-2}3^{n_3+1}-\frac{3^{n_3+1}+1}{2}-\left( (n_1+2)2^{n_2}3^{n_3}-\frac{3^{n_3}+1}{2} \right) \\
&=\left( (1-n_1)2^{n_2-2}-1 \right) 3^{n_3} \ge 0.
\end{split}
\end{equation*}
The last inequality holds because $n_1=0$, $n_2\ge 2$
\item We calculate the difference of number of paths to the sink:
\begin{equation*}
\begin{split}
&P_s^N\!(n_1+1,n_2+1,n_3-1)-P_s^N\!(n_1,n_2,n_3) \\
&=(n_1+3)2^{n_2+1}3^{n_3-1}-\frac{3^{n_3-1}+1}{2}-\left( (n_1+2)2^{n_2}3^{n_3}-\frac{3^{n_3}+1}{2} \right) \\
&=\left( 1-n_12^{n_2} \right) 3^{n_3-1} \ge 0.
\end{split}
\end{equation*}
The last inequality holds because $n_1=0$, $n_3\ge 1$
\end{enumerate}
\end{proof}

\begin{theorem}\label{th4.0}
Let $N\ge 5$. The supremum $\sup_{E\in\mathcal{T}^N_3}P_s^N\!(E)$ is realised by a trunk whose associated sequence is either $\left( 1,1,\left\lfloor \frac{N}{3}\right\rfloor -1\right)$, $\left( 1,3,\left\lfloor \frac{N}{3}\right\rfloor -2\right)$, $\left( 1,2,\left\lfloor \frac{N}{3}\right\rfloor -1\right)$, depending on the divisibility of $N$ by 3.
\end{theorem}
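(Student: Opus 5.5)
The plan is to reduce everything to the explicit closed formula for trunks of the shape in Corollary~\ref{edgeseqsinks}. By Lemma~\ref{lem4.0} and the second Lemma~\ref{lem4.1}, it suffices to work in $\mathcal{T}^N_3$. This set is finite, so a maximiser $F$ exists. By Corollary~\ref{edgeseqsinks}, the associated sequence of $F$ is $(n_1,n_2,n_3)$ with
\[
n_1+2n_2+3n_3=N,
\]
and $P_s^N(F)=(n_1+2)2^{n_2}3^{n_3}-\frac{3^{n_3}+1}{2}$. Each transformation in Lemma~\ref{lem4.2} preserves the weight $N$ and does not decrease $P_s^N$. The strategy is to apply them repeatedly to a maximiser, staying among maximisers, until the sequence is forced into one of the three listed shapes.

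\textbf{Step 1: normalise $n_1$ and $n_2$.}
\begin{itemize}
\item Apply move (1) while $n_1\ge 2$ and move (2) while $n_2\ge 4$. Both terminate, since $n_1$ strictly decreases under (1) and $n_2+n_3$ decreases under (2). The result is a maximiser with $n_1\le 1$ and $n_2\le 3$.
\item If then $n_1=0$, apply move (3) when $n_2\ge 2$, or move (4) when $n_3\ge 1$. This produces $n_1=1$ and keeps $n_2\le 3$: move (3) lowers $n_2$, and move (4) raises $n_2$ only from a value at most $1$, since $n_2\ge 2$ is handled by (3) first.
\item The only configuration this misses is $n_1=0$, $n_2\le 1$, $n_3=0$, which forces $N\le 2$ and is excluded by $N\ge 5$.
\end{itemize}
So we may assume $n_1=1$ and $n_2\in\{0,1,2,3\}$.

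\textbf{Step 2: read off $n_3$ from $N$.} With $n_1=1$ we have $N=1+2n_2+3n_3$, so each value of $n_2$ fixes $n_3$:
\begin{itemize}
\item $n_2=1$ gives $N\equiv 0 \pmod 3$ and the sequence $(1,1,\lfloor N/3\rfloor-1)$;
\item $n_2=2$ gives $N\equiv 2$ and the sequence $(1,2,\lfloor N/3\rfloor-1)$;
\item $n_2=3$ gives $N\equiv 1$ and the sequence $(1,3,\lfloor N/3\rfloor-2)$;
\item $n_2=0$ gives $N\equiv 1$ and the sequence $(1,0,(N-1)/3)$, which must be compared with $(1,3,n_3-2)$.
\end{itemize}

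\textbf{Step 3: rule out $n_2=0$.} This is the one genuinely new computation and the main obstacle. For $N\equiv 1$ and $N\ge 7$ we have $n_3\ge 2$, and the closed formula gives
\[
P_s^N(1,3,n_3-2)-P_s^N(1,0,n_3)=3^{n_3-2}\bigl(24-27+4\bigr)=3^{n_3-2}>0 .
\]
Hence $(1,0,n_3)$ is never optimal, and the maximiser is $(1,3,\lfloor N/3\rfloor-2)$. The case $N\equiv 1$ with $N<7$ would be $N=4$, which is outside the range $N\ge 5$. This is exactly where the hypothesis $N\ge 5$ enters.

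\textbf{Step 4: consistency checks and sanity test.} For $N=5,6,8$, check that the exponents $\lfloor N/3\rfloor-1$ and $\lfloor N/3\rfloor-2$ are nonnegative where used. Finally, compare the resulting trunks with the experimental table in the preceding subsection; for example, $N=7$ should give $(1,2,2,2)$, i.e.\ the sequence $(1,3,0)$.
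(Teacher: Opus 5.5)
Your proof is correct and follows the paper's route: you normalise with the four moves of Lemma~\ref{lem4.2} until $n_1=1$ and $n_2\le 3$, then read off $n_3$ from $N=1+2n_2+3n_3$. Two of your choices close points the paper's proof passes over. First, you start from a maximiser, so Corollary~\ref{edgeseqsinks} genuinely applies, whereas the paper starts from an arbitrary trunk. Second, your Step~3 comparison $P_s^N(1,3,n_3-2)-P_s^N(1,0,n_3)=3^{n_3-2}>0$ handles the residual sequence $(1,0,(N-1)/3)$, which the paper's final step of applying transformation~2 cannot turn into any of the listed forms.
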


\begin{proof}
Let us start with any trunk which has $N$ edges. 
Let $(n_1,n_2,n_3)$ be its associated sequence.
Let us apply transformation $1$ from lemma \ref{lem4.2} as many times as possible and receive graph whose sequence is $\left( n_1-2\left\lfloor \frac{N}{2}\right\rfloor,n_2+\left\lfloor \frac{n_1}{2}\right\rfloor, n_3 \right)$.
Now there are two cases:\\
If $n_1$ is even then either $n_2+\frac{n_1}{2} \ge 2$ or $n_3\ge 1$, because $2\left( n_2+\frac{n_1}{2} \right)+3n_3\ge 5$.
Depending on the case we apply transformation $3$ or $4$ from lemma \ref{lem4.2} to get a graph whose sequence is of type $(1,m_2,m_3)$.\\
If $n_1$ is odd then sequence $\left( n_1-2\left\lfloor \frac{N}{2}\right\rfloor,n_2+\left\lfloor \frac{n_1}{2}\right\rfloor, n_3 \right)$ is of the form $(1,m_2,m_3)$. \\
Finally to the sequence $(1,m_2,m_3)$ we apply transformation $2$ from lemma \ref{lem4.2} as many times as possible and get one of the a graphs whose sequences have been stated in the theorem.
\end{proof}

\subsection{Three Sisters}
\begin{theorem}
For $N \geq 5$ where $N$ is the number of edges, the following formulas compute $SP^N_*$, 
the maximal number of paths ending in the sink for a graph with $N=3k+i$, where $i \in \{0,1,2 \}$:
\phantom{.}\\
\begin{center}
\setlength\extrarowheight{16pt}
\boxed{
\begin{tabular}{lll}
$i=0$&& $SP^N_*=\displaystyle\frac{1}{2}\left( 11\cdot 3^{k-1}-1\right)$\\
$i=1$&& $SP^N_*=\displaystyle\frac{1}{2}\left( 47\cdot 3^{k-2}-1 \right)$\\
$i=2$ && $SP^N_*=\displaystyle\frac{1}{2}\left( 23\cdot 3^{k-1}-1\right)$\\
\end{tabular}}
\end{center}
\end{theorem}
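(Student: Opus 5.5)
The plan is to combine the reductions already established with the explicit closed formula for trunks. By the lemma reducing to single-sink graphs, Lemma~\ref{lem4.0}, and the first of the two lemmas labelled \ref{lem4.1} (width at most $3$), the maximal number $SP^N_*$ of paths ending in the sink equals $\sup_{E\in\mathcal{T}^N_3}P_s^N(E)$. Theorem~\ref{th4.0} then says this supremum is attained at a specific trunk with associated sequence of the form $(1,m_2,m_3)$. So it only remains to identify that trunk in each residue class of $N$ modulo $3$ and substitute it into the closed formula
\[
P_s^N(E)=(n_1+2)2^{n_2}3^{n_3}-\tfrac12\bigl(3^{n_3}+1\bigr).
\]

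First, I will match the three sequences of Theorem~\ref{th4.0} to the residues using the edge count $N=n_1+2n_2+3n_3$.
\begin{itemize}
\item For $N=3k$, the sequence $(1,1,k-1)$ has $1+2+3(k-1)=3k$ edges.
\item For $N=3k+1$, the sequence $(1,3,k-2)$ has $1+6+3k-6=3k+1$ edges.
\item For $N=3k+2$, the sequence $(1,2,k-1)$ has $1+4+3k-3=3k+2$ edges.
\end{itemize}
Here $\lfloor N/3\rfloor=k$ in all three cases, so these are exactly the three listed sequences.

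Next, I substitute into the formula.
\begin{itemize}
\item For $i=0$: $3\cdot 2\cdot 3^{k-1}-\tfrac12(3^{k-1}+1)=\tfrac12(11\cdot 3^{k-1}-1)$.
\item For $i=1$: $3\cdot 8\cdot 3^{k-2}-\tfrac12(3^{k-2}+1)=\tfrac12(47\cdot 3^{k-2}-1)$.
\item For $i=2$: $3\cdot 4\cdot 3^{k-1}-\tfrac12(3^{k-1}+1)=\tfrac12(23\cdot 3^{k-1}-1)$.
\end{itemize}
These are the boxed expressions. As a check, $N=5,6,7$ give $11,16,23$, in agreement with the table.

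The main obstacle is not the arithmetic but ensuring that Theorem~\ref{th4.0} really pins down one sequence per residue class. Its proof only applies the transformations of Lemma~\ref{lem4.2} greedily, so I need to check that the terminal sequence $(1,m_2,m_3)$ with $m_2\le 3$ is determined by $N\bmod 3$. From $1+2m_2\equiv N\pmod 3$ we get $m_2\equiv N-1 \pmod 3$ up to the choice $m_2\in\{0,3\}$ when $N\equiv 1$.

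The case $(1,0,m_3)$ needs separate handling. Since $N\ge5$ forces $m_3\ge 1$, it has strictly fewer paths than $(1,3,m_3-2)$. I will verify this directly from the closed formula, with $k=m_3$:
\[
3\cdot 8\cdot 3^{k-2}-\tfrac12(3^{k-2}+1)\;>\;3\cdot 3^{k}-\tfrac12(3^{k}+1),
\]
which holds since $24\cdot 3^{k-2}-27\cdot 3^{k-2}$ is compensated by $\tfrac12(3^k-3^{k-2})=4\cdot 3^{k-2}$. If $m_3=1$, that is $N=4$, this is excluded by $N\ge5$.

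Because the transformations of Lemma~\ref{lem4.2} never decrease $P_s^N$ and every trunk can be driven to the stated form, the supremum equals the value at that form, which completes the argument.
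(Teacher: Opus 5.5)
Your proposal is correct and follows the paper's proof, which consists exactly of substituting the trunks of Theorem~\ref{th4.0} into the path-count formula. Your arithmetic and the checks against the table are right, and your extra verification that a terminal sequence $(1,0,m_3)$ is beaten by $(1,3,m_3-2)$ usefully covers a case the paper's proof of Theorem~\ref{th4.0} skips over. One slip: $1+2m_2\equiv N \pmod 3$ gives $m_2\equiv 1-N$, not $m_2\equiv N-1$ (the two disagree when $N\equiv 0,2$), but this does not affect the argument because you matched sequences to residues by direct edge counts.
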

\begin{proof}
We simply compute the formula from lemma \ref{lem4.05} to the graphs from theorem \ref{th4.0}.
\end{proof}
Asymptotically, $\displaystyle \frac{SP^\infty_*}{P^\infty_*} > \frac{1}{2}$.

The matrix algebra $M_n(k)$ is the Leavitt path algebra $L_k(E)$ of a one-sink acyclic graph~$E$. 
Three Sisters tell us the maximal size $n$ (dimension $n^2$) of the
matrix algebra that can be obtained as the Leavitt path algebra of $E$ given a fixed amount $N$ of edges in~$E$. 
However, one can also ask the question: 
What is the minimal amount of edges needed in a graph to obatin the matrix algebra of a fixed size as the Leavitt path algebra of this graph?
The following corollary gives us an estimate:
\begin{corollary} \label{necessary}
Let $L_k(E)\cong M_n(k)$. Then the  amount of edges in $E$ is at least
$$ 
N_n:=\max\{ N \in \mathbb{N} \, | \,  n \geq SP^N_*\}.  
$$ 
\end{corollary}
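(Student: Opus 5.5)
The plan is to argue by contrapositive through the Three Sisters maximality theorem. Suppose $L_k(E)\cong M_n(k)$ and that $E$ has $N$ edges.

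First, I will reduce to the setting where that theorem applies. Since $L_k(E)$ is finite dimensional, $E$ is acyclic; I will also assume, as in this section, that $E$ is finite. By \cite[Proposition 3.5]{aapsm07}, $L_k(E)\cong\bigoplus_i M_{p_i}(k)$, summed over the sinks $v_i$ of $E$. Comparing with the simple algebra $M_n(k)$ (Wedderburn uniqueness), $E$ has exactly one sink, and $n=p_1$ is the number of paths ending at that sink, i.e.\ $n=P_s^N(E)$.

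Edges that lie in no path to the sink do not exist. Indeed, every vertex of an acyclic finite graph reaches some sink, and here there is only one. So I may discard isolated vertices, which contribute nothing new beyond extra sinks, and they are excluded anyway. This makes $E$ connected, so $E\in\mathcal{E}^N_s$.

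Next, I apply Lemma~\ref{lem4.0}, Lemma~\ref{lem4.1} and the Three Sisters theorem. Together they give
\[
n=P_s^N(E)\le \sup_{F\in\mathcal{E}_s^N}P_s^N(F)=SP^N_*
\]
for $N\ge5$. Small $N$ are handled by the table.

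The conclusion should then come from monotonicity of $N\mapsto SP^N_*$. The explicit formulas show it is strictly increasing: the values for $3k,3k+1,3k+2,3k+3$ are $\tfrac12(33\cdot3^{k-2}-1)$, $\tfrac12(47\cdot3^{k-2}-1)$, $\tfrac12(69\cdot3^{k-2}-1)$ and $\tfrac12(99\cdot 3^{k-2}-1)$. The inequality $n\ge P_s^N(E)$ alone bounds $N$ from above, though, not from below.

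So the real content must be read correctly. The minimal edge count $N$ for realizing $M_n(k)$ must satisfy $SP^N_*\ge n$, i.e.\ $N\ge\min\{N: SP^N_*\ge n\}$. Since $SP^N_*$ is increasing, this is at least $\max\{N: SP^N_*<n\}+1$. It is also at least $\max\{N:SP^N_*\le n\}$, because if $SP^{N'}_*\le n$ and $N<N'$, then $SP^N_*<SP^{N'}_*\le n$. In that case $N<N'$ is possible only when $SP^N_*\ge n$, and that is impossible.

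I need to be careful here. If $N<N'$ then $SP^N_*\le SP^{N'-1}_*<SP^{N'}_*\le n$. Meanwhile $n=P_s^N(E)\le SP^N_*$ forces $SP^N_*\ge n$, which is a contradiction. Hence $N\ge N'$ for every $N'$ with $n\ge SP^{N'}_*$, so $N\ge N_n$.

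The main obstacle is the strict monotonicity of $SP^N_*$. It has to be checked from the boxed formulas for $N\ge5$ and from the table for small $N$. I also need to make sure the set defining $N_n$ is nonempty and finite; it is finite because $SP^N_*\to\infty$.
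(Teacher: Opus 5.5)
Your proposal is correct and follows the argument the paper leaves implicit: the sink decomposition of $L_k(E)$ forces a unique sink with $n=P_s^N(E)\le SP^N_*$, and strict monotonicity of $N\mapsto SP^N_*$ then turns this into $N\ge N_n$. You can also get the monotonicity without the formulas or the table: attaching an edge from a fresh vertex into the sink adds exactly one path to the sink, so $SP^{N+1}_*\ge SP^N_*+1$.
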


\begin{remark}\label{Remark}
In general, it is not true that  $N_n+1$ many edges suffice to construct a graph $E$ such that $L_k(E) \cong M_n(k)$. Indeed,  
$N_{12}=N_{13}=N_{14}=N_{15}=5$ and 6 edges suffice to construct graphs whose Leavitt path algebras are, respectively, the matrix algebras of size
12, 13 and 15,  but one needs at least 7 edges to construct a graph $E$ such that $L_k(E) \cong M_{14}(k)$. 
The latter can  be proven by a direct classification of  one-sink graphs of weight 6  and finding an appropriate graph of weight~7:
\begin{center}
\begin{tikzpicture}[scale=1.2]
\fill (0,0)  circle[radius=1pt];
\fill (1,0)  circle[radius=1pt];
\fill (2,0)  circle[radius=1pt];
\fill (3,0)  circle[radius=1pt];
\fill (2,1)  circle[radius=1pt];
\fill (3,1)  circle[radius=1pt];
\draw[->, shorten >=2.5pt]  (0,0) to (1,0);
\draw[->, shorten >=2.5pt] (1,0) to[in=150, out=30] (2,0);
\draw[->, shorten >=2.5pt] (1,0) to[in=210, out=-30] (2,0);
\draw[->, shorten >=2.5pt] (2,0) to[in=150, out=30] (3,0);
\draw[->, shorten >=2.5pt] (2,0) to[in=210, out=-30] (3,0);
\draw[->, shorten >=2.5pt] (2,1) to (2,0);
\draw[->, shorten >=2.5pt] (3,1) to (3,0);

\end{tikzpicture}
\end{center}
\end{remark}

Corollary~\ref{necessary} computes the minimal amount of edges \emph{necessary} to build a matrix of size $n$ as a Leavitt path algebra.
Let us now define the minimal amount of edges \emph{sufficient} to build a matrix of size $n$ as a Leavitt path algebra:
\[
N^n:=\min\{ N \in \mathbb{N} \, | \,  \exists\; \text{a graph}\; E_N\colon |E^1_N|=N\;\text{and}\;L_k(E_N)=M_n(k)\}.
\]
The above remark shows that $N^{14}=7$, so $\sup_{n\in\mathbb{N}\setminus\{0\}}\{N^n-N_n\}\geq 2$. This leads us to the following puzzling question:
\[
\sup_{n\in\mathbb{N}\setminus\{0\}}\{N^n-N_n\}<\infty\;?
\]

\section{Acknowledgements}

\bibliographystyle{acm}

\end{document}